\newtheorem{theorem}{Theorem}
\newtheorem{corollary}[theorem]{Corollary}
\newtheorem{lemma}[theorem]{Lemma}
\newtheorem{proposition}[theorem]{Proposition}
\newtheorem{remark}[theorem]{Remark}
\newtheorem{problem}[theorem]{Problem}
\newtheorem{definition}[theorem]{Definition}
\newtheorem{example}[theorem]{Example}
\numberwithin{equation}{section}
\def\R{\mathbb{R}}
\def\C{\mathbb{C}}
\def\N{\mathbb{N}}
\def\Z{\mathbb{Z}}
\def\Q{\mathbb{Q}}
\def\mc{\mathcal{C}}
\def\al{\alpha}
\def\be{\beta}
\def\la{\lambda}
\def\aa{\mathcal{A}}
\def\dd{\mathcal{D}}
\def\ll{\mathcal{L}}
\def\mm{\mathcal{M}}
\def\pp{\mathcal{P}}
\def\rr{\mathcal{R}}
\def\sss{\mathcal{S}}
\def\tt{\mathcal{T}}
\def\vv{\mathcal{V}}
\def\id{\mathbb{1}}
\def\bb{\mathbf{b}}
\def\bc{\mathbf{c}}
\def\bd{\mathbf{d}}
\def\boe{\mathbf{e}}
\def\bn{\mathbf{n}}
\def\bu{\mathbf{u}}
\def\bv{\mathbf{v}}
\def\bw{\mathbf{w}}
\def\bx{\mathbf{x}}
\def\by{\mathbf{y}}
\def\bz{\mathbf{z}}
\def\bo{\mathbf{0}}
\def\beps{\bm{\varepsilon}}
\def\bdd{\dd}
\def\dsum{\oplus}
\newcommand{\abs}[1]{\left|#1\right|}
\newcommand{\norm}[1]{\left\Vert#1\right\Vert}
\newcommand{\cone}[1]{\text{Cone}\left(#1\right)}
\newcommand{\conv}[1]{\text{Conv}\left(#1\right)}
\newcommand{\rp}[2]{\text{Rep}_{#1}\left(#2\right)}
\newcommand{\twist}[3]{{#1}\oplus_{\scriptscriptstyle #3}{#2}}
\newcommand{\vectwo}[2]{\begin{pmatrix*}[r] #1 \\ #2 \end{pmatrix*}}
\newcommand{\vecfour}[4]{\begin{pmatrix*}[r] #1 \\ #2 \\ #3 \\ #4\end{pmatrix*}}
\newcommand{\vertiii}[1]{{\left\vert\kern-0.25ex\left\vert\kern-0.25ex\left\vert #1 
    \right\vert\kern-0.25ex\right\vert\kern-0.25ex\right\vert}}
\begin{document}

\title[Rational matrix digit systems]{Rational matrix digit systems}

\author{Jonas~Jankauskas}
\author{J\"org~M.~Thuswaldner}

\address[J.J.]{Faculty of Mathematics and Informatics, Institute of Mathematics, Vilnius University, Naugarduko g. 24, LT 03225, Vilnius, Lithuania}
\address[J.J and J.M.T.]{Mathematik und Statistik, Montanuniversit\"at Leoben, Franz Josef Stra\ss{}e 18, A-8700 Leoben, Austria}
\email{jonas.jankauskas@gmail.com}
\email{joerg.thuswaldner@unileoben.ac.at}

\thanks{The post-doctoral position of the first author at Montanuniversit\"at Leoben in 2018-2021 was supported by the Austrian Science Fund (FWF) project M2259 Digit Systems, Spectra and Rational Tiles under the Lise Meitner Program. The second author is supported by project I 3466 \emph{GENDIO} funded by the Austrian Science Fund (FWF)}

\subjclass[2010]{11A63, 11C20, 11H06, 11P21, 15A30, 15B10, 52A40, 90C05} \keywords{Digit expansion, matrix number systems, dynamical systems, convex digit sets, lattices}

\begin{abstract}
Let $A$ be a $d \times d$ matrix with rational entries which has no eigenvalue $\la \in \C$ of absolute value $\abs{\la} < 1$ and let $\Z^d[A]$ be the smallest nontrivial $A$-invariant $\Z$-module. We lay down a theoretical framework for the construction of {\em digit systems} $(A, \bdd)$, where $\bdd\subset \Z^d[A]$ finite, that admit finite expansions of the form 
\[
\bx= \bd_0 + A \bd_1 + \cdots + A^{\ell-1}\bd_{\ell-1} \qquad(\ell\in \N,\;\bd_0,\ldots,\bd_{\ell-1} \in \bdd)
\]
for every element $\bx\in \Z^d[A]$. 
We put special emphasis on the explicit  computation of {\it small digit sets} $\bdd$ that admit this property for a given matrix $A$, using techniques from matrix theory, convex geometry, and the Smith Normal Form. Moreover, we provide a new proof of general results on this {\em finiteness property} and recover analogous finiteness results for digit systems in number fields a unified way.
\end{abstract}

\maketitle

\bigskip

\section{Introduction and main results}\label{sec:intro}
Let $A \in \Q^{d \times d}$ be an invertible matrix with rational entries. It is easy to see that the smallest nontrivial $A$-invariant $\Z$-module containing $\Z^d$ is given by the set $\Z^d[A]$ of vectors $\bx \in \mathbb{Q}^d$ that can be written as
\begin{equation}\label{eq:zrep}
\bx= \bx_0 + A \bx_1 + \cdots + A^{k-1}\bx_{k-1},
\end{equation}
where $k\in \N$ and $\bx_0,\ldots,\bx_{k-1} \in \Z^d$, {\it i.e.}, 
\[
\Z^d[A] := \bigcup_{k=1}^{\infty} \left(\Z^d + A\Z^d + \dots + A^{k-1}\Z^d\right).
\]
Thus $\Z^d[A]$ is the set of all vectors that can be expanded in terms of positive powers of $A$ with integer vectors as coefficients. Motivated by the existing vast theory and many practical applications of number systems, it is natural to ask the following question. Is there a finite set $\bdd \subset \Z^d[A]$, such that every vector $\bx \in \Z^d[A]$ has an expansion of the form 
\begin{equation}\label{eq:zrep2}
\bx= \bd_0 + A \bd_1 + \cdots + A^{\ell-1}\bd_{\ell-1},
\end{equation}
where $\ell\in \N$ and $\bd_0,\ldots,\bd_{\ell-1} \in \bdd$. To put this in another way: can one find a finite digit set $\bdd\subset \Z^d[A]$ such that 
\begin{equation}\label{eq:ZdD}
\Z^d[A]= \bdd[A], 
\end{equation}
where
\begin{equation}\label{eq:defDA}
\bdd[A] := \bigcup_{\ell=1}^{\infty} \left(\bdd + A\bdd + \dots + A^{\ell-1}\bdd\right).
\end{equation}
%Another related question concerns the uniqueness of expansions of the form \eqref{eq:zrep2} are unique. 

A lot of work related to this problem has been done in the context of number systems in number fields ({\it cf.}~\cites{KovPet1,KovPet2,Brunotte:01,EvGyPe19}),  polynomial rings (see~\cites{AkPe:02,AkRa,Petho:91, SSTW, Woe1} and the survey~\cites{BaratBertheLiardetThuswaldner06}), lattices (see {\it e.g.}\ \cites{KoLa,Vin2}), and rational bases ({\it cf.}~\cite{AkFrSa}). Moreover, it is intimately related to the study of self-affine tiles (see~\cites{KLR:04,LagWan97,StTh15}) and the so-called {\em height reducing property} ({\it cf.}~\cites{AkDrJa,AkThZa,AkiZai, JanThu}). As we will see below, the problem of the existence of such a set $\bdd$ can be restated in terms of a so-called \emph{finiteness} property of general \emph{digit systems}. It is the aim of the present paper to construct sets $\bdd$ that satisfy \eqref{eq:ZdD}. Here we strive for very general theory on the one side while paying attention to algorithmic issues on the other side.

%A finite subset $\bdd \subset \Z^d[A]$ of vectors is called \emph{a digit set}. Let us denote by $\bdd[A]$ in $\Z^d[A]$ the set of all finite radix expansions of the form
%\begin{equation}\label{eq:defDA}
%\bdd[A] := \left\{ \bd_0 + A\bd_1 + \dots + A^{k-1}\bd_{k-1}: \bd_j \in \bdd, 0 \leq j \leq k, k \in \N\right\}.
%\end{equation}
 
The pair $(A, \bdd)$ is called \emph{a matrix digit system} or simply \emph{a digit system}. The matrix $A$ is then called \emph{the base} of this digit system and the elements of $\bdd$ are called its \emph{digits}. We say that $(A, \bdd)$ has \emph{the finiteness property} in $\Z^d[A]$ (or Property (F) for short), if every vector $\bx \in \Z^d[A]$ has a finite expansion of the form \eqref{eq:zrep2}, {\it i.e.}, if \eqref{eq:ZdD} holds. It is easily seen that the requirement that $\bdd$ contains a complete system of residue class representatives of  $\Z^d[A]/A\Z^d[A]$ is a necessary, but in general not sufficient, condition for $(A, \bdd)$ to have the finiteness property. One also says that $(A, \bdd)$ possesses \emph{the uniqueness property}, or Property (U), if two different finite radix expressions in \eqref{eq:defDA} always yield different vectors of $\Z^d[A]$. For the number systems that already possess Property (F), a necessary and sufficient condition to have Property (U) is $|\bdd|= |\Z^d[A]/A\Z^d[A]|$.

Digit systems that possess both properties (F) and (U) are called \emph{standard}. Frequently, one additional requirement is imposed on standard digit systems: the zero vector $\bo_d$ must belong to $\bdd$ (which enables natural positional arithmetics). Standard digit systems in $\Z^d$ in integer matrix bases have been studied extensively in the literature.

Clearly, matrix digit systems are modeled after the usual number systems in $\Z$ (see \cite{Grunwald:1885} for the negative case). Concerning rational matrices, in dimension $d=1$ one recovers the base $A=\tfrac pq \in \Q$, with $p \in \Z$, $q \in \N$, $\gcd(p, q)=1$. In this case, $\Z[\tfrac pq]=\Z[\tfrac1q]$ is the set of all rational fractions with denominators that divide some power of $q$, and the submodule $\tfrac pq\Z[\tfrac pq]=p\Z[\tfrac 1q]$ represents fractions with numerators divisible by $p$; so that $\bdd\{0, 1, \dots, p-1\}$ is a complete set of  coset representatives of $\Z[\tfrac pq]$ modulo $\tfrac pq\Z^d[\tfrac pq]$. The pair $(\tfrac pq,\bdd)$ then is a rational matrix  number system in $\Z[\tfrac 1q]$. Such systems have been studied in \cite{AkFrSa}.

If $A \in \Z^{d \times d}$ is an integer matrix, the module $\Z^d[A]$ reduces to the lattice $\Z^d$. Digit systems in lattices with expanding base matrices were introduced by Vince \cites{Vin1, Vin2, Vin3} and studied extensively in connection with  fractal tilings. Indeed, a tiling theory for the standard digit systems with expanding integral base matrices was worked out by Gr{\"o}chenig and Haas \cite{GroHaa94}, and Lagarias and Wang \cites{LagWan96a, LagWan96b, LagWan97}. A corresponding theory for tilings produced by rational algebraic number base systems was developed by Steiner and Thuswaldner~\cite{StTh15}.

When trying to study the finiteness property of digit systems $(A, \bdd)$ for large classes of base matrices $A$ and digits sets $\bdd$, new challenges of geometrical and arithmetical nature arise. First, if $A$ is allowed to have eigenvalues $\la \in \C$ of absolute value $\abs{\la}=1$, then $A^{-1}$ is no longer a contracting linear mapping. If such $\la$ are not roots of unity, then there exist points $\bx \in \R^n$ with infinite, non-periodic orbits $(A^{-n}\bx)_{n=1}^{\infty}$. Even worse, if $A$ has a Jordan block of order $ \geq 2$ corresponding to the eigenvalue $\la$ on the complex unit circle in its Jordan decomposition, then $A^{-n}\bx$ can diverge to $\infty$, as it is easily seen from the simple $2 \times 2$ example $A=\begin{pmatrix}1 & 1\\0 & 1\end{pmatrix}$ by taking $\bx=\begin{pmatrix}
0\\1    
\end{pmatrix}$. These situations require careful control of vectors in certain directions when multiplication by $A^{-1}$ is performed. Aside from the issues of geometric nature, arithmetics in $\Z^d[A]$ is also more difficult when $A$ has rational entries: even the computation of the residue class group for the expansions in base $A$ is no longer trivial and causes significant problems.

As a result of the aforementioned challenges, properties (F) and (U) no longer go well together: we typically need \emph{more} than $[\Z^d[A]:A\Z^d[A]]$ digits to address all possible issues. In this situation, one is forced to make a choice, which property, (F) or (U), must be sacrificed to reinforce another. We make the choice in the favor of Property (F): it is often important to have multiple digital expressions for every $\bx \in \Z^d[A]$, rather than leave some $\bx$ with no such expression whatsoever while trying to preserve (U). 

The aim of the present paper is to ``build up'' digit sets with Property (F) in an explicit and algorithmic way. We reveal the  dynamical systems underlying the digit systems $(A,\bdd)$ and interpret the finiteness property in terms of their attractors. The properties of these dynamical systems are investigated in Section~\ref{secArithm}. After that, we decompose a given rational matrix $A$ into ``generalized Jordan blocks'', called \emph{the hypercompanion matrices},  build digits systems for each such block separately, and then patch them together (see Section~\ref{sec:twist}). In these building blocks, {\em generalized rotation matrices} play a crucial role in order to treat the eigenvalues of modulus $1$. These matrices and their digit sets are thoroughly studied in Section~\ref{secPerGenRot}. The decomposition of $A$ has the advantage that we get much better hold on the size and structure of the digit sets than in previous papers (like \cites{AkThZa,JanThu}).  This can be used to give new proofs of the main results of \cites{AkThZa,JanThu} which can easily be turned in algorithms in order to construct convenient digit sets for any given matrix $A$ (see Section~\ref{sec:general}). 
The algorithmic nature of our approach is  exploited in the discussion of various examples. In particular, in Section~\ref{sec:theorySNF} we use the Smith Normal Form in order to get results on the size of certain residue class rings that are relevant for the construction of digit sets with finiteness property.
In Section~\ref{sec:num} these results are combined with our general theory
in order to construct examples of small digit sets for matrices with particularly interesting properties.

Summing up, in the present paper we lay down a comprehensive (and much simplified) theory for the finiteness property in digit systems with rational matrix bases. Our construction of the digit sets is based on the theory of matrices, convex geometry and Smith Normal Form representations of lattice bases; these proofs are independent from the previous constructions of \cites{AkThZa,JanThu}, that were based on digit systems in number fields. In fact, we show that our finiteness result on matrix-base digit systems implies the according result on digit systems in number fields, thereby demonstrating that these two finiteness results are of the same strength. 

\section{Arithmetics of matrix digit systems}\label{secArithm}

In this section we discuss discrete dynamical systems associated that represent 'the remainder division' in the module $\Z^d[A]$. If these dynamical systems have finite attractors, this enables us to construct a digit system $(A,\bdd)$ with Property (F). We also consider certain restrictions of these dynamical mappings to the integer lattice $\Z^d$ and their subsequent extensions to the whole module $\Z^d[A]$ which appear to be very useful in the practical computations. 

\subsection{Classical remainder division}

Let $A \in \Q^{d \times d}$ be given and let $\bdd \subset \Z^d[A]$ be a digit set.
We take a closer look at the arithmetics of $\Z^d[A]$, in particular, the division with remainder. Associated with the digit system $(A, \bdd)$ is the procedure of division with remainder in $\Z^d[A]$. As usual, a mapping $\bd: \Z^d[A] \to \bdd$ is called \emph{a digit function} if $\bd(\bx) \equiv \bx \pmod{A\Z^d[A]}$, {\it i.e.} if $\bd(\bx)-\bx \in A\Z^d[A]$. Thus, using $\bd$ we can define the dynamical system
\begin{equation}\label{eq:defRemDiv}
\Phi: \Z^d[A] \to \Z^d[A], \qquad \Phi(\bx) := A^{-1}(\bx - \bd(\bx)).
\end{equation}

As we allow $A \in \Q^{d \times d}$ to have non-integral coefficients, our first step is to show that division with remainder works essentially in the same way, as in the classical case. In this context, the following definition is of importance. 

\begin{definition}[Attractor of $\Phi$]\label{def:Attr}
A set $\aa \subset{\Z^d[A]}$ with the properties that:
\begin{enumerate}[i)]
\item $\Phi(\aa) \subset \aa$,
\item for every $\bx \in \Z^d[A]$, there exists $n=n(\bx) \in \N$, such that $\Phi^n(\bx) \in \aa$
\item no proper subset of $\aa$ satisfies properties $(i-ii)$
\end{enumerate}
 is called \emph{an attractor} of $\Phi$ and will be denoted $\aa_{\Phi}$.
\end{definition}

It should be noted that multiple variants of Definition \ref{def:Attr} are used in the literature (cf. \cite{BrinStuck:2015}*{Section 1.13}). In general, the attractor might not exist, or be infinite. However, if the attractor $\aa_{\Phi}$ exits, Definition \ref{def:Attr} implies it is unique. If the attractor $\aa_{\Phi}$ of $\Phi$ is finite, then one can construct a digit system $(A,\dd)$ that satisfies Property (F).

\begin{proposition}\label{prop:finAttr}
Suppose that the attractor $\aa_{\Phi}$ of the quotient function $\Phi$ is a finite set. Then, for every $\bx \in \Z^d[A]$, the orbit $(\Phi^n(\bx))_{n =1}^{\infty}$ is eventually periodic, with only a finite number of possible periods. In particular, the digit system $(A, \bdd \cup \aa_{\Phi})$ has the finiteness property. 
\end{proposition}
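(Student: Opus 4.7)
The plan is to unfold the recursion defined by $\Phi$ and then use finiteness of $\aa_\Phi$ to terminate the expansion at a point that already lies in the attractor. From the defining relation $\Phi(\bx) = A^{-1}(\bx - \bd(\bx))$ one immediately obtains $\bx = \bd(\bx) + A\Phi(\bx)$, and a straightforward induction on $k$ gives
\[
\bx = \sum_{i=0}^{k-1} A^i \bd(\Phi^i(\bx)) + A^k \Phi^k(\bx) \qquad (k \geq 1).
\]
This identity is the backbone of the argument: the digits $\bd(\Phi^i(\bx))$ already lie in $\bdd$, so only the residual term $A^k \Phi^k(\bx)$ needs to be controlled.

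The next step is to exploit Definition~\ref{def:Attr}. By (i), $\Phi$ restricts to a self-map of $\aa_\Phi$; since $\aa_\Phi$ is finite, every orbit inside $\aa_\Phi$ must eventually enter a cycle, and, because the total number of points (and hence the combined length of all cycles) is bounded by $|\aa_\Phi|$, only finitely many different periods can occur. By (ii), for each $\bx \in \Z^d[A]$ there exists $n(\bx)$ such that $\Phi^{n(\bx)}(\bx) \in \aa_\Phi$; combined with (i), all further iterates remain in $\aa_\Phi$, so the full orbit $(\Phi^n(\bx))_{n \ge 1}$ is eventually periodic with period belonging to the finite list obtained above. This establishes the first assertion of the proposition.

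For Property (F), I would now choose $k = n(\bx)$ in the unfolded recursion, so that $\Phi^k(\bx) \in \aa_\Phi$. Substituting yields
\[
\bx = \bd(\bx) + A\bd(\Phi(\bx)) + \cdots + A^{k-1}\bd(\Phi^{k-1}(\bx)) + A^k \Phi^k(\bx),
\]
a finite expansion of length $\ell = k+1$ whose first $k$ digits lie in $\bdd$ and whose top-order digit $\Phi^k(\bx)$ lies in $\aa_\Phi$. Hence every coefficient belongs to $\bdd \cup \aa_\Phi$, which proves that $(A, \bdd \cup \aa_\Phi)$ satisfies the finiteness property.

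There is no substantial obstacle; the argument is essentially bookkeeping once the recursion is written down. The only conceptual point worth highlighting is that one must not attempt to keep iterating $\Phi$ on an element of $\aa_\Phi$ (the orbit there may cycle forever and thus never produce a ``natural'' termination); instead, one \emph{adjoins} $\aa_\Phi$ to the digit set and uses the trapped attractor element itself as the final, highest-order digit, which is precisely why the enlarged set $\bdd \cup \aa_\Phi$ appears in the conclusion.
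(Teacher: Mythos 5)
Your proof is correct and takes essentially the same route as the paper's: unfold the recursion to get $\bx=\sum_{i=0}^{k-1}A^i\bd(\Phi^i(\bx))+A^k\Phi^k(\bx)$, stop once $\Phi^k(\bx)$ lands in the finite attractor so that the top coefficient lies in $\aa_\Phi$ (hence $\Z^d[A]=(\bdd\cup\aa_\Phi)[A]$), and deduce eventual periodicity with finitely many possible periods from the fact that $\Phi$ is deterministic and the orbit is eventually trapped in the finite set $\aa_\Phi$. No gaps to report.
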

\begin{proof}[Proof of Proposition \ref{prop:finAttr}] By Definition~\ref{def:Attr}, for each $\bx \in \Z^d[A]$, there is $n_0(\bx) \in \N$, such that $
\bx = \beps_0 + A\beps_1 + \dots + \beps_{n-1}A^{n-1} + A^n\Phi^n(\bx),
$
with $\beps_1,\ldots,\beps_{n-1} \in \bdd$ and $\Phi^n(\bx) \in \aa_{\Phi}$ holds for each $n\ge n_0(\bx)$. Thus $\Z^d[A]=(\bdd \cup \aa_\Phi)[A]$ and, hence, $(A, \bdd \cup \aa_{\Phi})$ has the finiteness property.
As the value of $\Phi^{n+1}(\bx)$ depends only on $\Phi^n(\bx)$, the statement on the ultimate periodicity and periods follows from the finiteness of $\aa_{\Phi}$.
\end{proof}

Suppose that $(A,\bdd)$ is a digit system.
Then we can use the dynamical system $\Phi$ in order to generate expansions of $x\in \Z^d[A]$. Indeed, by the definition of $\Phi$ there are uniquely defined elements $\bd_0,\bd_1,\ldots \in \bdd$ satisfying
\[
\bx = \beps_0 + A\beps_1 + \dots + A^{n-1}\beps_{n-1} + A^n\Phi^n(\bx) \qquad(n\in \N).
\]
If the attractor of $\Phi$ is $\{\bo_d\}$ this implies that for each $\bx\in \Z^d[A]$ there is $n\in \N$ such that 
\begin{equation}\label{eq:xgenphi}
\bx = \beps_0 + A\beps_1 + \dots + A^{n-1}\beps_{n-1}.
\end{equation}
In this case we say that \eqref{eq:xgenphi} is \emph{a finite expansion of $\bx$ generated by $\Phi$}.

For number systems in $\Z^d$ with integer matrices as base it is well-known that the attractor of a number system $(A,\bdd)$ with an expanding matrix $A\in\Z^{d\times d}$ and a digit set $\bdd \subset \Z^d$ is a finite set (see {\it e.g.}~\cite{Vin2}*{Section~5}). Our next goal is to show that the same result holds in our more general context.
Let $A \in \Q^{d \times d}$ be an invertible rational matrix. As the entries of the vectors in $\Z^d[A]$ may have arbitrarily large denominators, $\Z^d[A]$, in general, does not possess any finite $\Z$-basis. This entails that our proofs become a bit more complicated than in the integer case.
In particular we will frequently need the following sequence of rational lattices.
Starting with the lattice $\Z^d$ and repeatedly multiplying its vectors by $A$, $k=1, 2, \dots$ times, one defines
\[
\Z_k^d[A] := \Z^d + A\Z^d + \dots + A^{k-1}\Z^d.
\]
Since $\Z_k^d[A] \subset \Z_{k+1}^d[A]$, these rational lattices form a nested chain. 

We need two preparatory lemmas.

\begin{lemma}\label{lem:Res} If $A \in \Q^{d \times d}$ is invertible, then $\Z^d[A]/A\Z^d[A]$ is a finite Abelian group. Moreover, there exists $k := k_A \in \N$, such that 
\begin{equation}\label{eq:ZZAcap}
\Z^d \cap A\Z^d[A] = \Z^d \cap A\Z_k^d[A], \quad\hbox{and}\quad \Z^d[A]\big/A\Z^d[A] = \Z^d \big/ \left(\Z^d \cap A\Z^d_k[A]\right). 
\end{equation}
In particular, representatives for $\Z^d[A]/A\Z^d[A]$ can be chosen from $\Z^d$.
\end{lemma}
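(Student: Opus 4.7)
The key structural identity I would establish first is
\[
\Z^d[A] \;=\; \Z^d + A\,\Z^d[A].
\]
The inclusion $\supseteq$ is clear once one checks $A\Z^d[A]\subseteq \Z^d[A]$ by shifting the expansion \eqref{eq:zrep} in its definition; the inclusion $\subseteq$ comes from writing any $\bx=\bx_0+A\bx_1+\cdots+A^{k-1}\bx_{k-1}$ as $\bx_0 + A(\bx_1+A\bx_2+\cdots+A^{k-2}\bx_{k-1})$. Applying the second isomorphism theorem for abelian groups to this identity immediately gives the group-theoretic statement
\[
\Z^d[A]\big/A\Z^d[A] \;\cong\; \Z^d\big/\bigl(\Z^d\cap A\Z^d[A]\bigr),
\]
and, in particular, shows that every coset in $\Z^d[A]/A\Z^d[A]$ already has a representative inside $\Z^d$.

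Next I would prove finiteness of this quotient. Since $A\in\Q^{d\times d}$ is invertible, there exists a positive integer $N$ such that $N A^{-1} \in\Z^{d\times d}$ (take $N$ to be a common multiple of the denominators of the entries of $A^{-1}$). Then for every $\bv\in N\Z^d$, $A^{-1}\bv=(NA^{-1})(\bv/N)\in\Z^d$, so $N\Z^d \subseteq A\Z^d \subseteq A\Z_k^d[A]\subseteq A\Z^d[A]$ for all $k\ge 1$. Consequently,
\[
N\Z^d \;\subseteq\; \Z^d \cap A\Z_k^d[A] \;\subseteq\; \Z^d \cap A\Z^d[A] \;\subseteq\; \Z^d,
\]
which bounds the indices $[\Z^d : \Z^d\cap A\Z_k^d[A]]$ by $N^d$. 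In particular $\Z^d/(\Z^d\cap A\Z^d[A])$ is a finite abelian group, and hence so is $\Z^d[A]/A\Z^d[A]$.

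Finally I would establish the stabilization claim. The sets $\Z^d\cap A\Z_k^d[A]$ form an ascending chain of subgroups of the finitely generated $\Z$-module $\Z^d$, which is Noetherian; therefore the chain stabilizes, i.e.\ there exists $k_A\in\N$ with $\Z^d\cap A\Z_k^d[A]=\Z^d\cap A\Z_{k_A}^d[A]$ for every $k\ge k_A$. Taking the union over $k$ and using $A\Z^d[A]=\bigcup_k A\Z_k^d[A]$ yields
\[
\Z^d\cap A\Z^d[A] \;=\; \Z^d\cap A\Z_{k_A}^d[A],
\]
which is the first half of \eqref{eq:ZZAcap}. Plugging this equality into the isomorphism obtained in the first step gives the second half.

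I do not expect any serious obstacle: the only point that requires a tiny bit of care is the justification that the increasing chain is exhausted by its stable value (handled by Noetherianity of $\Z^d$, or equivalently by noting that all indices $[\Z^d:\Z^d\cap A\Z_k^d[A]]$ are bounded by $N^d$ so the chain of subgroups is forced to terminate). Everything else reduces to clean bookkeeping with the definitions of $\Z^d[A]$ and $\Z_k^d[A]$ and the observation that $A^{-1}\in\Q^{d\times d}$ has bounded denominators.
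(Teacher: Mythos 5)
Your proof is correct and takes essentially the same route as the paper's: the identity $\Z^d[A]=\Z^d+A\,\Z^d[A]$ combined with the second isomorphism theorem, finiteness of the index of $\Z^d\cap A\Z^d[A]$ in $\Z^d$, and stabilization of the ascending chain $\Z^d\cap A\Z_k^d[A]$ followed by taking the union. The only differences are cosmetic: your explicit choice of $N$ with $NA^{-1}\in\Z^{d\times d}$ makes the finite-index step more concrete than the paper's brief assertion, while the paper additionally spells out (via a complete residue system $\mathcal{R}\subset\Z^d$) why the abstract isomorphism may be read as the equality claimed in \eqref{eq:ZZAcap}, a point that is already contained in the canonical map furnished by your use of the second isomorphism theorem.
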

\begin{proof}[Proof of Lemma \ref{lem:Res}]
As $\Z^d[A]=\Z^d+A\Z^d[A]$, the Second Isomorphism Theorem for modules yields
\begin{equation}\label{eq:seconIsom}
\Z^d[A] \big/ A\Z^d[A] = \left(\Z^d+A\Z^d[A]\right)\big/A\Z^d[A] \simeq \Z^d\big/\left(\Z^d \cap A\Z^d[A]\right).
\end{equation}
Since $A$ is invertible, $\Z^d \cap A\Z^d[A]$ is a $d$-dimensional additive subgroup of $\Z^d$. Hence, it is a lattice and has a finite index in $\Z^d$. Therefore, $\Z^d[A]/A\Z^d[A]$ is a finite Abelian group.
Now, consider the nested chain of lattices
\begin{equation*}
\Z^d \cap A\Z^d_1[A] \subset \dots \subset \Z^d \cap A\Z^d_j[A] \subset  \Z^d \cap A\Z^d_{j+1}[A] \subset \dots \subset \Z^d \cap A\Z^d[A] \subset \Z^d.
\end{equation*}
Since the index $\Z^d \cap A\Z_j^d[A]$ in $\Z^d$ cannot decrease indefinitely, the chain must eventually stabilize, hence, there is a constant $k=k_A$ such that $\Z^d \cap A\Z_{j+1}^d[A] = \Z^d \cap A\Z_j^d[A]$ holds for $j \geq k$. This yields the first identity in \eqref{eq:ZZAcap}. Thus in order to establish the second identity it is sufficient to prove equality (instead of isomorphy) in \eqref{eq:seconIsom}. From $\Z^d[A] = \Z^d + A\Z^d[A]$ and $\Z^d = \rr + \Z^d \cap A\Z^d[A]$, where $\rr$ is a complete set of coset  representatives of $\Z^d/(\Z^d \cap A\Z^d[A])$, it follows that $\Z^d[A] = \rr + A\Z^d[A]$. Thus, $\rr$ contains all coset representatives of $\Z^d[A]/A\Z^d[A]$. $|\rr|=|\Z^d[A]/A\Z^d[A]|$ by \eqref{eq:seconIsom}, these representatives also must be different modulo $A\Z^d[A]$. Therefore, the isomorphism 
``$\simeq$'' symbol in \eqref{eq:seconIsom} can indeed be replaced with ``$=$''.
\end{proof}

\begin{lemma}\label{lem:RemDiv}
There exists $l:=l(A, \bdd) \in \N$, such that, for every $\bx \in \Z^d[A]$ and sufficiently large $n \geq n_{\bx}$, $\Phi^{n}(\bx) \in \Z_l^d[A]$.
\end{lemma}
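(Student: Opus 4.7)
\medskip
\noindent\emph{Proof plan.} The plan is to introduce a ``level function'' on $\Z^d[A]$ via the nested chain $\Z_1^d[A] \subset \Z_2^d[A] \subset \cdots$, and to show that each application of $\Phi$ either decreases this level or leaves the vector inside a fixed $\Z_l^d[A]$. Since $\bdd$ is finite I can fix $m \in \N$ with $\bdd \subset \Z_m^d[A]$. Letting $k_A$ be the stabilization constant from Lemma \ref{lem:Res}, the claim will be that $l := \max(k_A, m)$ works.

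The main computation I would carry out is a one-step estimate. For $\bx \in \Z_k^d[A]$, set $K := \max(k,m)$ and expand both $\bx$ and $\bd(\bx)$ as $\sum_{i=0}^{K-1} A^i \bx_i$ and $\sum_{i=0}^{K-1} A^i \boe_i$ with $\bx_i, \boe_i \in \Z^d$ (padding with zeros where needed). Since $\bd$ is a digit function, $\bx - \bd(\bx) \in A\Z^d[A]$, which forces the constant term $\bx_0 - \boe_0 \in \Z^d$ to lie in $\Z^d \cap A\Z^d[A]$. This is exactly where Lemma \ref{lem:Res} intervenes: it identifies this intersection with $\Z^d \cap A\Z_{k_A}^d[A]$, allowing me to write $\bx_0 - \boe_0 = A\by$ for some $\by \in \Z_{k_A}^d[A]$. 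Multiplying through by $A^{-1}$ then yields $\Phi(\bx) = \by + \sum_{j=0}^{K-2} A^j(\bx_{j+1} - \boe_{j+1})$, which lies in $\Z_{\max(k_A, K-1)}^d[A]$.

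From this one-step bound the lemma follows at once. If $k > l$, then $K = k$ and $\max(k_A,K-1) = k-1$, so $\Phi(\bx) \in \Z_{k-1}^d[A]$ and the level strictly drops. If $k \leq l$, then $K \leq l$ and $\max(k_A,K-1) \leq l$, so $\Phi(\bx) \in \Z_l^d[A]$ and the orbit is trapped in $\Z_l^d[A]$. Iterating, any $\bx \in \Z_k^d[A]$ lands inside $\Z_l^d[A]$ after at most $\max(0, k-l)$ steps of $\Phi$ and remains there for all subsequent steps, which is exactly the statement of the lemma.

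The only place I expect to need care is the invocation of Lemma \ref{lem:Res}. Without the uniform stabilization $\Z^d \cap A\Z^d[A] = \Z^d \cap A\Z_{k_A}^d[A]$, the ``quotient'' $\by$ could in principle depend on $\bx$ in a way that requires increasingly high levels to express, and no single $l$ independent of $\bx$ would bound the orbit. Once this stabilization is secured, the remainder of the argument is routine bookkeeping of which $A^j \Z^d$-summand each term of $\Phi(\bx)$ contributes to.
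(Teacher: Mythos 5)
Your proposal is correct and follows essentially the same route as the paper: both isolate the constant term of $\bx-\bd(\bx)$, note it lies in $\Z^d\cap A\Z^d[A]$, invoke the stabilization from Lemma~\ref{lem:Res} to express it as $A$ times an element of bounded level, and deduce the one-step estimate that makes the level drop until the orbit is trapped in $\Z_l^d[A]$. Your choice $l=\max(k_A,m)$ matches the paper's ``smallest $l\ge k_A$ with $\bdd\subset\Z_l^d[A]$'' up to inessential bookkeeping, so nothing further is needed.
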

\begin{proof}[Proof of Lemma \ref{lem:RemDiv}]
According to Lemma \ref{lem:Res}, there exists $k=k_A$, such that $\Z^d \cap A\Z^d[A] = \Z^d \cap A\Z_k^d[A]$. Choose the smallest integer $l \geq k_A$, such that $\bdd \subset \Z_l^d[A]$. Assume that $\bx = \bx_0 + A\bx_1 + \dots + A^{m-1}\bx_{m-1} \in \Z^d_m[A]$, with $\bx_0,\ldots,\bx_{m-1}\in \Z^d$. Because $\bd(\bx)\in \bdd$ there are 
$\beps_0,\ldots,\beps_{l-1} \in \Z^d$ such that
$\bd(\bx) = \beps_0 + A\beps_1 + \dots+ A^{l-1}\beps_{l-1}$.
Therefore we have $\bx - \bd(\bx) \in (\bx_0 - \beps_0 + A\Z^d_{i}[A]) \cap A\Z^d[A]$, with $i=\max\{l-1,m-1\}$ from which we conclude that $\bx_0 - \beps_0 \in \Z^d \cap A\Z^d[A] = \Z^d \cap A\Z_k^d[A] \subset A\Z_l^d[A]$. This implies that $\Phi(\bx) \in \Z_j^d[A]$, where $j=\max\{l, m-1\}$. By iteration, eventually $\Phi^n(\bx) \in \Z_l^d[A]$.
\end{proof}

We are now in a position to prove that expanding matrices lead to finite attractors.

\begin{proposition}\label{prop:finExp} If $A \in \Q^{d \times d}$ is expanding and $\bdd$ is a digit set, then the quotient function $\Phi$ of $(A,\bdd)$ has a finite attractor $\aa_{\Phi} \subset \Z^d[A]$. Consequently, the digit system $(A, \bdd \cup \aa_{\Phi})$ has the finiteness property in $\Z^d[A]$.
\end{proposition}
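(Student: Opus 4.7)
The plan is to exhibit a finite subset $F \subset \Z^d[A]$ that is forward-invariant under $\Phi$ and absorbs every orbit in finitely many steps; from such an $F$ the attractor falls out by iteration, and the finiteness property of $(A, \bdd \cup \aa_\Phi)$ then follows directly from Proposition~\ref{prop:finAttr}. Two ingredients should combine to produce $F$: Lemma~\ref{lem:RemDiv} confines every orbit eventually to the fixed rational lattice $\Z_l^d[A]$, while the expanding hypothesis on $A$ confines every orbit eventually to a bounded region. A discrete set intersected with a bounded one is finite.

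More concretely, I would first check that $\Z_l^d[A]$ really is discrete in $\R^d$: if $q\in\N$ is a common denominator for the entries of $A$, then $\Z_l^d[A] \subseteq q^{-(l-1)}\Z^d$, so its bounded subsets are finite. A close reading of the proof of Lemma~\ref{lem:RemDiv} also yields $\Phi(\Z_l^d[A]) \subseteq \Z_l^d[A]$, so once an orbit enters this lattice it remains there. Next, I would invoke the standard fact that an expanding matrix admits a norm $\norm{\cdot}$ on $\R^d$ with $\norm{A^{-1}\bv}\le \rho\norm{\bv}$ for some $\rho\in(0,1)$ (by a Lyapunov / Jordan-basis construction). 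With $M := \max_{\bd\in\bdd}\norm{\bd}$, iterating $\norm{\Phi(\bx)}\le \rho(\norm{\bx} + M)$ shows that any closed ball of radius $R > \rho M/(1-\rho)$ is $\Phi$-invariant and absorbs every orbit. Taking $F := \{\bx \in \Z_l^d[A] : \norm{\bx}\le R\}$ then yields a finite, $\Phi$-invariant, absorbing set.

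From here, the attractor is extracted as $\aa_\Phi := \bigcap_{n\ge 0}\Phi^n(F)$. Since $F$ is finite, this descending chain of subsets stabilizes at some $\Phi^N(F)$, on which $\Phi$ acts as a bijection, so $\aa_\Phi$ is a union of periodic orbits. Verifying the three conditions of Definition~\ref{def:Attr} is then routine: forward-invariance and the absorbing property are inherited from $F$; minimality follows because any proper $\Phi$-invariant subset of $\aa_\Phi$ would miss at least one periodic point whose orbit then fails to enter it. Finally, Proposition~\ref{prop:finAttr} converts this finite attractor into the finiteness property of $(A,\bdd\cup\aa_\Phi)$.

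The main obstacle I anticipate is the production of the adapted norm making $A^{-1}$ strictly contracting. This is a standard consequence of expansivity (apply the spectral-radius formula to $A^{-1}$ and use equivalence of norms, or work in a Jordan basis) but it deserves a moment's care, since the rational (non-integer) entries of $A$ preclude the naive coordinate-wise bounds available in simpler settings. Once the norm is in place, everything else is either an instance of \emph{discrete $\cap$ bounded $=$ finite} or elementary bookkeeping inside a finite set.
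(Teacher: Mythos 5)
Your proposal is correct and follows essentially the same route as the paper: Lemma~\ref{lem:RemDiv} confines orbits to the lattice $\Z_l^d[A]$, the expanding hypothesis bounds them (you via an adapted norm making $A^{-1}$ a strict contraction, the paper via convergence of $\sum_{n}\norm{A^{-n}}$ applied to the explicit formula for $\Phi^n(\bx)$), and discreteness plus boundedness yields a finite attractor to which Proposition~\ref{prop:finAttr} applies. Your extraction of $\aa_{\Phi}=\bigcap_{n\ge 0}\Phi^n(F)$ together with the minimality check is simply a more explicit rendering of what the paper leaves implicit.
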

% \begin{proof}[Proof of Proposition \ref{prop:finAttr}] By Lemma \ref{lem:RemDiv}, there exists $l \in \N$, such that $\Phi^n(\bx) \in \Z_l^d[A]$ for every $n \geq n_1(\bx)$. From $\bx = \beps_0 + A\beps_1 + \dots + \beps_{n-1}A^{n-1} + A^n\Phi^n(\bx)$, $\beps_0,\ldots,\beps_{n-1} \in \bdd$, one deduces $\Phi^n(\bx)= A^{-n}\bx -A^{-1}\beps_{n-1}-A^{-2}\beps_{n-2}-\dots-A^{-n}\beps_0$. Since $A$ is expanding, by Lind~\cite{Lind:82}*{Section~2} there is a norm $\vertiii{\cdot}$ depending on $A$ w.r.t.\ which $A^{-1}$ is a (strict) contraction with contraction factor $\rho<1$. Thus $\vertiii{\Phi^n(\bx)} \leq \frac{\max\{\vertiii{\beps}\,:\, \beps\in \bdd\}}{1-\rho}=:r$. As $\Z_l^d[A]$ is a lattice, $\Z_l^d[A] \cap B(\bo, r)$ must be finite. Therefore, $\Phi$ has a finite attractor $\aa_\Phi \subset \Z_l^d[A] \cap B(\bo, r)$. It remains to apply Proposition \ref{prop:finAttr}.
% \end{proof}
\begin{proof}[Proof of Proposition \ref{prop:finExp}] By Lemma \ref{lem:RemDiv}, there exists $l \in \N$, such that $\Phi^n(\bx) \in \Z_l^d[A]$ for every $n \geq n_1(\bx)$. From $\bx = \beps_0 + A\beps_1 + \dots + \beps_{n-1}A^{n-1} + A^n\Phi^n(\bx)$, $\beps_0,\ldots,\beps_{n-1} \in \bdd$, one deduces $\Phi^n(\bx)= A^{-n}\bx -A^{-1}\beps_{n-1}-A^{-2}\beps_{n-2}-\dots-A^{-n}\beps_0$. Since $A$ is expanding, the operator norm series $\sum_{n=0}^{\infty}\norm{A^{-n}}_{\infty} < C_A < \infty.$ It means $\norm{\Phi^n(\bx)} \leq C_A\max\{\norm{\beps}\,:\, \beps\in \bdd\}$. As $\Z_l^d[A]$ is a lattice, $\Z_l^d[A] \cap B(\bo_d, r)$ must be finite. Therefore, $\Phi$ has a finite attractor $\aa_\Phi \subset \Z_l^d[A] \cap B(\bo_d, r)$. It remains to apply Proposition \ref{prop:finAttr}.
\end{proof}

\subsection{Auxiliary lattice and division with remainder with respect to it}

Let $(A,\bdd)$ be a digit system. A necessary condition for $(A,\bdd)$ to have the finiteness property is the fact that $\bdd$ contains a complete set of residue class representatives of the group $\Z^d[A]/A\Z^d[A]$. Thus it is desirable to shed some light on this group. While in the case of an integer matrix $A$ we have $|\det A|=|\Z^d[A]/A\Z^d[A]|$, for a rational matrix $A$ the group $\Z^d[A]/A\Z^d[A]$ and its size is not so easy to. In order to get information on $\Z^d[A]/A\Z^d[A]$, one must obtain the base matrix $L \in \Q^{d \times d}$ for the rational lattice $A\Z_k^d[A]=L\Z^d$ with $k=k_A$ described in Lemma \ref{lem:Res}. Unfortunately, no satisfactory representation theory for the nested lattices $\Z^d_k[A]$ in $\Z^d[A]$ is yet available for the determination of the required index $k=k_A$ in Lemma \ref{lem:Res} where the nested chain of lattices stabilizes. Another drawback is that, in general, $\Z^d$ is not be preserved when doing division with remainder modulo $A\Z^d[A]$. In order to deal with these issues, in this subsection we introduce an auxiliary lattice and a division with remainder with respect to this lattice, that eventually can be used for the computation of digital expansions in $\Z^d[A]$. Indeed, it turns out that complete sets of residue classes are easy to deal with in this auxiliary lattice. Moreover, they contain complete sets of residue classes of  $\Z^d[A]/A\Z^d[A]$. 

\begin{definition}\label{def:Aux}
The lattice $\Z^d \cap A\Z^d$ will be referred as \emph{the auxiliary lattice} of the matrix $A \in \Q^{d \times d}$. %A complete set $\rr_A \in \Z^d$ of residue class representatives or $\Z^d / (\Z^d \cap A\Z^d)$ will be referred as \emph{auxiliary residue group}.
\end{definition}

%Let $\rr_A$ be an auxiliary residue group.
If $A$ is invertible, $\Z^d \cap A\Z^d$ is a full-rank sublattice of $\Z^d$, hence, in this case  $\Z^d/\left(\Z^d \cap A\Z^d\right)$ is a finite group. Furthermore, we have the following result.

\begin{lemma}\label{lemAuxGroup}
Let $A\in \Q^{d\times d}$ be given.
Then $\Z^d[A]/A\Z^d[A]$ is isomorphic to a subgroup of $\Z^d \big/ \left(\Z^d \cap A\Z^d\right)$. 
%In particular,
% auxiliary residue group $\rr_A$ of  $\Z^d/\left(\Z^d \cap A\Z^d\right)$ contains a complete set of residue class  representatives of $\Z^d[A]/A\Z^d[A]$. Moreover, it contains a subgroup $\tt_A$, such that $\rr_A/\tt_A \simeq \Z^d[A]/A\Z^d[A]$.
\end{lemma}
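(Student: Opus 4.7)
The plan is to combine Lemma~\ref{lem:Res} with an obvious inclusion of lattices and a standard fact about finite Abelian groups. First, Lemma~\ref{lem:Res} already tells us that the reduction-mod-$A\Z^d[A]$ map
\[
\pi\colon \Z^d \longrightarrow \Z^d[A]/A\Z^d[A],\qquad \bx\mapsto \bx+A\Z^d[A],
\]
is a surjective homomorphism with kernel $\Z^d\cap A\Z^d[A]$, so that we get the canonical identification $\Z^d[A]/A\Z^d[A]\simeq \Z^d/(\Z^d\cap A\Z^d[A])$. This is the natural ``$\Z^d$-side'' description of the residue class group we want to analyse.

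Next, I would use the trivial inclusion $A\Z^d\subset A\Z^d[A]$, which yields
\[
\Z^d\cap A\Z^d \;\subset\; \Z^d\cap A\Z^d[A]\;\subset\;\Z^d.
\]
Since $A$ is invertible, $\Z^d\cap A\Z^d$ is a sublattice of full rank in $\Z^d$, so $\Z^d/(\Z^d\cap A\Z^d)$ is a finite Abelian group. The Third Isomorphism Theorem then delivers a surjective homomorphism
\[
\Z^d/(\Z^d\cap A\Z^d)\;\twoheadrightarrow\;\Z^d/(\Z^d\cap A\Z^d[A])\;\simeq\;\Z^d[A]/A\Z^d[A],
\]
whose kernel is $(\Z^d\cap A\Z^d[A])/(\Z^d\cap A\Z^d)$. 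In particular, $\Z^d[A]/A\Z^d[A]$ is a \emph{quotient} of $\Z^d/(\Z^d\cap A\Z^d)$.

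Finally, to turn ``quotient'' into ``subgroup'' I would invoke the classical fact that every quotient of a finite Abelian group is isomorphic to some subgroup of the same group. Via the invariant factor decomposition $\Z^d/(\Z^d\cap A\Z^d)\simeq \bigoplus_{i=1}^{r} \Z/n_i\Z$, one checks that any quotient $\bigoplus_{i=1}^{r}\Z/m_i\Z$ with $m_i\mid n_i$ embeds back into $\bigoplus_{i=1}^{r}\Z/n_i\Z$ via $(a_i)\mapsto((n_i/m_i)a_i)$, which gives the desired injective map. Composing the inverse of the identification from the first paragraph with this embedding produces the required monomorphism $\Z^d[A]/A\Z^d[A]\hookrightarrow \Z^d/(\Z^d\cap A\Z^d)$.

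The main (and only) conceptual subtlety is the last step: the \emph{direct} construction coming from the inclusion of lattices gives a quotient map, not an embedding, so one has to appeal to the self-duality of finite Abelian groups (or, equivalently, to the structure theorem) to conclude that the two notions coincide up to isomorphism. Everything else is essentially bookkeeping with the Second and Third Isomorphism Theorems.
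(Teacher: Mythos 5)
Your proof is correct and follows essentially the same route as the paper: the Third Isomorphism Theorem applied to the chain $\Z^d \cap A\Z^d \subset \Z^d \cap A\Z^d[A] \subset \Z^d$, combined with the identification $\Z^d[A]/A\Z^d[A]\simeq \Z^d/(\Z^d\cap A\Z^d[A])$ from Lemma~\ref{lem:Res}. If anything, you are more complete than the paper, whose proof stops at exhibiting $\Z^d[A]/A\Z^d[A]$ as a quotient of $\Z^d/(\Z^d\cap A\Z^d)$ and leaves implicit the final step you spell out, namely that a quotient of a finite Abelian group is isomorphic to a subgroup of it.
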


\begin{proof}[Proof of Lemma \ref{lemAuxGroup}]
%Since $\Z^d \cap A\Z^d \subset A\Z^d[A]$, one has
%\[
%\Z^d[A] = \Z^d+ A\Z^d[A]= \left(\rr_A + \Z^d \cap A\Z^d\right) + A\Z^d[A] = \rr_A + A\Z^d[A].
%\]
%Therefore, $\rr_A$ contains all representatives of $\Z^d[A]/A\Z^d[A]$.
By applying the $3$rd Isomorphism theorem to nested modules $\Z^d \cap A\Z^d \subset \Z^d \cap A\Z^d[A] \subset \Z^d$, we obtain
\[
\frac{\Z^d \big/ \left(\Z^d \cap A\Z^d\right)}{\left(\Z^d \cap A\Z^d[A]\right) \big/ \left(\Z^d \cap A\Z^d\right)} \simeq \frac{\Z^d}{\Z^d \cap A\Z^d[A]} \simeq \Z^d[A]/A\Z^d[A],
\]
where the last isomorphism comes from Lemma \ref{lem:Res}. 
%Therefore, if one can defines
%\[
%\tt_A := \left(\Z^d \cap A\Z^d[A]\right) \big/ \left(\Z^d \cap A\Z^d\right),
%\] then  $\tt_A$ has the stated property.
\end{proof}

An advantage in using the auxiliary lattice $\Z^d \cap A\Z^d$ %and its residue group $\rr_A$ 
for arithmetics comes from the fact that its base matrix can be computed directly from the base matrix $A$ using its Smith Normal Form, see Section~\ref{sec:theorySNF}. We will now develop a special kind of division with remainder based on this auxiliary lattice and its residue group. The first step of this construction is to restrict our division to $\Z^d$ according to the following definition.

\begin{definition}\label{def:Restr}
Let $\bdd \subset \Z^d$ be a finite digit set that contains a complete set of coset representatives of $\Z^d/(\Z^d \cap A\Z^d)$. \emph{A restricted digit function} is a mapping $\bd_{\mathrm{r}}: \Z^d \to \bdd$ with the homomorphism property $\bd_{\mathrm{r}}(\bx) \equiv \bx \pmod{\Z^d \cap A\Z^d}$. To this restricted setting we associate the dynamical system  $\Phi_{\mathrm{r}}: \Z^d \to \Z^d$ given by $\Phi_{\mathrm{r}}(\bx) := A^{-1}(\bx - \bd_{\mathrm{r}}(\bx))$.
\end{definition}

The attractor $\aa_{\Phi_{\mathrm{r}}} \subset \Z^d$ of the dynamical system $\Phi_{\mathrm{r}}$ is defined analogously to the attractor of~$\Phi$.

In the second step of our construction we extend this restricted division procedure to the full module $\Z^d[A]$. For any $\bx \in \Z^d[A]$, there always exists a smallest $k \in \N$, such that $\bx \in \Z_k^d[A]$. That is, there exists a shortest expansion of the form
\begin{equation}\label{eq:minExp}
\bx = \bx_0 + A\bx_1 + \dots + A^{k-1}\bx_{k-1},
\end{equation}
with $\bx_0,\ldots,\bx_{k-1} \in \Z^d$. In general, such expressions (even the shortest ones) are not unique: for instance, one can modify $\bx_0$ by adding any element $\bv \in \Z^d \cap A\Z^d$ to it, and then subtracting $A^{-1}\bv$ from $\bx_1$.
However, we re-impose the uniqueness by introducing arbitrary (lexicographic) order $\mathcal{O}$ on $k$-tuples of vectors from $\Z^d$, for $k \in \N$, and then always picking the  expansion \eqref{eq:minExp} whose set of vectors $(\bx_0, \dots, \bx_{k-1})$ is the smallest w.r.t.\ this order. Note that the definition of such an order is always possible, since the set of of all such representations $\eqref{eq:minExp}$ is set-isomorphic to the countable set $\bigcup_{k=1}^{\infty}\Z^{d \times k}$. 
%\begin{definition}\label{defExtDiv}
This order enables us to define a dynamical system $\Psi: \Z^d[A] \to \Z^d[A]$. Indeed, let 
\eqref{eq:minExp} be the minimal expansion of $\bx \in  \Z^d[A]$ w.r.t.\ the order $\mathcal{O}$. Then $\Psi$ is given by
\begin{equation}\label{efExtDiv}
\Psi(\bx) := A^{-1}(\bx - \bd_\mathrm{r}(\bx_0)) = \left(\Phi_{\mathrm{r}}(\bx_0)+\bx_1 \right) + A\bx_2 + \dots + A^{k-2}\bx_{k-1}.
\end{equation}
(We note that the expression on the right hand side of \eqref{efExtDiv} is not necessarily the shortest possible expansion of $\Psi(\bx)$.)
%\end{definition}

The attractor $\aa_{\Psi}$ of the dynamical systems $\Psi$, respectively, is defined analogously to the attractor of~$\Phi$.

Let $A\in \Q^{d\times d}$ be given.
The following lemma shows that it suffices to look at the dynamical system $\Phi_{\mathrm{r}}$ acting on $\Z^d$ in order to construct a digit system in the larger space $\Z^d[A]$ that enjoys the finiteness property.

\begin{lemma}\label{lem:attrExt}
Let $A\in \Q^{d\times d}$ and let $\bdd \subset \Z^d$ be a finite set that contains a complete set of coset representatives of $\Z^d/(\Z^d \cap A\Z^d)$. 
If $\Phi_\mathrm{r}$ in $\Z^d$ has a finite attractor $\aa_{\Phi_\mathrm{r}}$, then $\aa_{\Phi_\mathrm{r}}=\aa_{\Psi}$. In particular, if $\Phi_\mathrm{r}$ is ultimately zero in $\Z^d$, then $\Psi$ is  ultimately zero in $\Z^d[A]$ and the digit system $(A, \bdd_\mathrm{r} \cup \aa_{\Phi_\mathrm{r}})$
%, where $\bdd_\mathrm{r} \subset \Z^d$ is a common digit set for both $\Phi_\mathrm{r}$ and $\Psi$, 
has the finiteness property.
%in $\Z^d[A]$.
\end{lemma}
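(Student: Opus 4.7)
The plan is to exploit two simple observations: that $\Psi$ and $\Phi_\mathrm{r}$ coincide on $\Z^d$, and that iterating $\Psi$ drives every $\bx\in\Z^d[A]$ into $\Z^d$ after finitely many steps. \textbf{First}, note that any $\bx\in\Z^d$ has the length-one minimal expansion $\bx=\bx_0$ in \eqref{eq:minExp}, so by definition $\Psi(\bx)=A^{-1}(\bx-\bd_\mathrm{r}(\bx))=\Phi_\mathrm{r}(\bx)$; since $\bx-\bd_\mathrm{r}(\bx)\in\Z^d\cap A\Z^d$, this value lies again in $\Z^d$. Hence $\Psi|_{\Z^d}=\Phi_\mathrm{r}$ and $\Z^d$ is $\Psi$-invariant.

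\textbf{Second}, the key length-reduction step: if $\bx\in\Z^d[A]$ has a minimal expansion \eqref{eq:minExp} of length $k\geq 2$, then formula \eqref{efExtDiv} exhibits $\Psi(\bx)$ as a sum of $\Z^d$-vectors times $\id, A, \ldots, A^{k-2}$, so $\Psi(\bx)\in \Z_{k-1}^d[A]$. Iterating $\Psi$ at most $k-1$ times, the orbit enters $\Z_1^d[A]=\Z^d$ and thereafter coincides with the $\Phi_\mathrm{r}$-orbit by the first step.

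\textbf{Third}, I would verify the three attractor axioms of Definition~\ref{def:Attr} for $\aa_{\Phi_\mathrm{r}}$ under $\Psi$ and invoke uniqueness of the attractor (noted after Definition~\ref{def:Attr}) to conclude $\aa_\Psi=\aa_{\Phi_\mathrm{r}}$. Axiom (i) is immediate from $\aa_{\Phi_\mathrm{r}}\subset\Z^d$ together with $\Psi|_{\Z^d}=\Phi_\mathrm{r}$. Axiom (ii) combines the length-reduction step with the attractor property of $\aa_{\Phi_\mathrm{r}}$ under $\Phi_\mathrm{r}$ on $\Z^d$. For axiom (iii), any strict subset $\mathcal{B}\subsetneq \aa_{\Phi_\mathrm{r}}$ satisfying (i)-(ii) for $\Psi$ would automatically satisfy them for $\Phi_\mathrm{r}$ on $\Z^d$ (the two dynamics agree there and both preserve $\Z^d$), contradicting minimality of $\aa_{\Phi_\mathrm{r}}$. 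The ``in particular'' clause then follows at once: with $\aa_{\Phi_\mathrm{r}}=\{\bo_d\}$ one has $\aa_\Psi=\{\bo_d\}$, so $\Psi$ is ultimately zero on $\Z^d[A]$, and Proposition~\ref{prop:finAttr} applied to $\Psi$ yields the finiteness property for $(A,\bdd_\mathrm{r}\cup\aa_{\Phi_\mathrm{r}})$.

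I do not anticipate a substantial obstacle in carrying this out. The only point that requires a little care is that $\Psi$ is defined via the arbitrary order $\mathcal{O}$ on expansions, but the length-reduction step is a statement about membership in $\Z_{k-1}^d[A]$, which holds for whichever minimal expansion is selected by $\mathcal{O}$, so the argument is insensitive to this choice.
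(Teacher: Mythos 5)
Your proof is correct and follows essentially the same route as the paper's: $\Psi$ reduces the expansion length so every orbit enters $\Z^d$, where $\Psi$ coincides with $\Phi_\mathrm{r}$, whence $\aa_\Psi=\aa_{\Phi_\mathrm{r}}$ and the finiteness property follows. Your explicit verification of the three attractor axioms merely spells out what the paper leaves implicit.
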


\begin{proof}[Proof of Lemma \ref{lem:attrExt}]
As $\Psi$ maps $\Z_j^d[A]$ to $\Z_{j-1}^d[A]$, for every $\bx \in \Z^d[A]$, there exists $k=k(\bx)$, such that $\Psi^k(\bx) \in \Z^d$. Thus, by the definition of $\Psi$ in \eqref{efExtDiv} we gain $\Psi^{n+k}(\bx) = \Phi_\mathrm{r}^n(\Psi^k(\bx))$ for each $n\ge 0$. Thus, one eventually ends up with an expansion of an integral vector $\Psi^k(\bx) \in \Z^d$. This proves that $\aa_{\Phi_\mathrm{r}}=\aa_{\Psi}$. The remaining assertions immediately follow from this identity.
\end{proof}

By using $\Z^d$, $\Phi_\mathrm{r}$,  $\dd_\mathrm{r}$, and $\Z^d \cap A\Z^d$ in place of $\Z^d[A]$, $\Phi$, $\dd$, and $A\Z^d[A]$ in the proof of Proposition \ref{prop:finExp} we obtain immediately that, for every expanding matrix $A \in \Q^{d \times d}$, dynamical system $\Phi_\mathbf{r}$ has a finite attractor $\aa_{\Phi_\mathbf{r}}$ in $\Z^d$. By Lemma \ref{lem:attrExt}, the same set $\aa_{\Phi_{\mathrm{r}}}$ then is a finite attractor of the extended remainder division $\Psi$ in $\Z^d[A]$. In a similar way, one can replace $\Phi$ with $\Psi$ in Proposition \ref{prop:finExp} and prove that the periodicity and finiteness properties of $\Psi$ are completely analogous to those of $\Phi$. Thus, in most cases $\Psi$ can replace the classical remainder division $\Phi$ when performing arithmetics in $\Z^d[A]$.

In the same way as in \eqref{eq:xgenphi} we define finite expansions of $\Z^d$ generated by $\Phi_\mathrm{r}$ as well as finite expansions of $\Z^d[A]$ generated by $\Psi$. Using this terminology we get the following corollary.

\begin{corollary}\label{col:RestrExt}
If $A \in \Q^{d \times d}$ is expanding, then one can always find a digit set $\bdd \subset \Z^d$, such that the following assertions hold.
\begin{enumerate}[a)]
    \item every $\bx \in \Z^d$ has a finite expansion in $\bdd[A]$ generated by $\Phi_\mathrm{r}$.
    \item every $\bx \in \Z^d[A]$ has a finite expansion generated by $\Psi$. This implies that $(A, \bdd)$ has the finiteness property. 
\end{enumerate}
\end{corollary}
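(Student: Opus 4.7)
The plan is to combine the restricted analog of Proposition~\ref{prop:finExp} for $\Phi_\mathrm{r}$ (already announced in the paragraph preceding the corollary) with Lemma~\ref{lem:attrExt}, exploiting the freedom to enlarge the digit set beyond a mere system of coset representatives of $\Z^d/(\Z^d\cap A\Z^d)$ so that the attractor itself becomes part of the digit set. This turns the ``ultimately periodic'' conclusion of Proposition~\ref{prop:finAttr} into a genuinely ``ultimately zero'' conclusion.

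First I would fix an arbitrary complete set of coset representatives $\bdd_\mathrm{r}' \subset \Z^d$ of $\Z^d/(\Z^d\cap A\Z^d)$ containing $\bo_d$, together with a digit function $\bd_\mathrm{r}'$ satisfying $\bd_\mathrm{r}'(\bo_d)=\bo_d$; let $\Phi_\mathrm{r}'$ denote the associated restricted dynamical system. Then $\bo_d$ is a fixed point of $\Phi_\mathrm{r}'$, and by the restricted analog of Proposition~\ref{prop:finExp} its attractor $\aa' \subset \Z^d$ is finite (and contains $\bo_d$). I would then declare the final digit set to be
\[
\bdd := \bdd_\mathrm{r}' \cup \aa',
\]
a finite subset of $\Z^d$ which still contains a full system of coset representatives, and replace $\bd_\mathrm{r}'$ by the new digit function $\bd_\mathrm{r}$ that sends each $\bv \in \aa'$ to itself -- admissible because $\bv \equiv \bv \pmod{\Z^d \cap A\Z^d}$ -- and agrees with $\bd_\mathrm{r}'$ on $\Z^d\setminus \aa'$. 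For the corresponding dynamical system $\Phi_\mathrm{r}$ one then has $\Phi_\mathrm{r}(\bv) = A^{-1}(\bv - \bv) = \bo_d$ for every $\bv \in \aa'$.

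With this setup both conclusions fall out cleanly. For~(a), any $\bx \in \Z^d$ follows the $\Phi_\mathrm{r}'$-orbit until it first enters $\aa'$, which must happen in finitely many steps by the defining property of $\aa'$; one further application of $\Phi_\mathrm{r}$ then sends it to $\bo_d$, where it is absorbed. Hence the new attractor equals $\{\bo_d\}$, and every $\bx \in \Z^d$ has a finite expansion in $\bdd[A]$ generated by $\Phi_\mathrm{r}$. For~(b), Lemma~\ref{lem:attrExt} yields $\aa_\Psi = \aa_{\Phi_\mathrm{r}} = \{\bo_d\}$, so $\Psi$ is ultimately zero on $\Z^d[A]$, every $\bx \in \Z^d[A]$ admits a finite expansion generated by $\Psi$, and in particular $(A,\bdd)$ has the finiteness property. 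The only mildly delicate bookkeeping point is to verify that replacing $\bd_\mathrm{r}'$ by $\bd_\mathrm{r}$ on $\aa'$ does not sabotage the attractor property: since the two digit functions agree off $\aa'$, orbits evolve identically until their first entry into $\aa'$, which still must occur in finitely many steps; thereafter the modification forces immediate absorption into $\bo_d$, so no orbit can escape.
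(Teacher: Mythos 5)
Your proposal is correct and follows essentially the route the paper intends: the restricted analogue of Proposition~\ref{prop:finExp} gives a finite attractor for $\Phi_\mathrm{r}$, the attractor is adjoined to the digit set with the digit function modified so that attractor points are absorbed into $\bo_d$ (the same trick the paper uses explicitly in the proof of Lemma~\ref{lemHypercomp}), and Lemma~\ref{lem:attrExt} transfers the ultimately-zero behaviour from $\Phi_\mathrm{r}$ to $\Psi$ on $\Z^d[A]$. If anything, your write-up is more explicit than the paper, which states the corollary without a separate proof, relying on the preceding paragraph.
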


Thus, the dynamical system $\Psi$ allows to do radix expansions in $\Z^d[A]$, while preserving $\Z^d$. This feature is extremely handy in constructing ``twisted sums'' of digit systems in a way that avoids ``mixing'' the arithmetics of $\Z^d[A]$ and $\Z^d[B]$, for different matrices $A \ne B$. We will come back to this in Section~\ref{sec:twist}.

\section{The finiteness property for generalized rotations}\label{secPerGenRot}

A matrix in $\R^{d \times d}$ that is similar to some orthogonal matrix in $\R^{d \times d}$ is called \emph{a generalized rotation}. These matrices will be of importance when we have to deal with eigenvalues of modulus one in general matrices. In this section we will show how to get small digit sets that ensure the finiteness property for the rational matrices that are generalized rotations. 

\subsection{Compactness criterion for solutions to a norm inequality}\label{secConvex}
Equip the space $\R^d$ with the usual Euclidean norm $\norm{\cdot}$ and let $\sss =\left\{\bv_1, \bv_2, \dots, \bv_s \right\}$ be a finite subset of $\R^d$. Recall that  \emph{the cone} generated by vectors from $\sss$ is defined as
\[
\cone{\sss} = \left\{t_1\bv_1 + t_2\bv_2 + \dots + t_s \bv_s: t_j \geq 0, 1 \leq j \leq s\right\}.
\]
It will turn out that the condition $\cone{\sss}=\R^d$ will be an important property of certain collections of digits $\sss$ for systems whose bases are generalized rotations. One can show that $\cone{\sss}=\R^d$ is equivalent to the fact that $\bo_d$ is an inner point of \emph{the convex hull} of $\sss$, which is defined as
\[
\conv{\sss} = \left\{t_1\bv_1 +t_2\bv_2 + \dots + t_s \bv_s: t_j \geq 0, t_1 + \dots + t_s \leq 1, 1 \leq j \leq s\right\}.
\]
%
%Next definition will be very useful for our purposes.
%
%\begin{definition}\label{defHull}
%We say that a finite set $\sss \subset \R^d$ has {\em a proper enclosure}, if one of the two equivalent conditions hold:
%\begin{enumerate}[i)]
%\item $\cone{\sss} = \R^d$ (the span condition);
%\item $\bo_d \in \inter{\conv{\sss}}$ (the open inclusion of zero).
%\end{enumerate}
%\end{definition}
%\begin{proof}[Proof of equivalence:]
%
%$(i) \implies (ii)$ Set $U = \conv{\sss} \cap %\left(-\conv{\sss}\right)$. Since %$\cone{\sss}=\R^d$, for every $\bv \in \R^d$, %there exists $\la > 0$, such that $\la \bv \in %U$. Therefore, one can define the Minkowski %functional $\norm{\dots}_U: \R^d \to [0, %+\infty)$ by setting
%\[
%\norm{\bv}_U := \inf\left\{\la > 0: \bv \in %\la U\right\}. 
%\] Evidently, $\norm{\bo_d}_U=0$ and %$\norm{-\bv}_U = \norm{\bv}_U$ by the symmetry %of $U$. One verifies that $\norm{\dots}_U$ %satisfies the norm axioms by using the %convexity and compactness of $\conv{\sss}$, %and that $\norm{\bv}_U \leq 1$ is equivalent %to $\bv \in U$.%
%
%However, all norms in $\R^d$ are equivalent to Euclidean norm: there exists $C > 0$, such that $\norm{\bv}_U \leq C \norm{\bv}$ for every $\bv \in \R^d$. Now, from $\norm{\bv} \leq 1/C$ one then obtains $\norm{\bv}_U \leq 1$. Thus, $B_{_{1/C}}(\bo_d) \subset U \subset \conv{\sss}$.
%
%$(ii) \implies (i)$: Since $B_{\eps}(\bo_d) \subset \conv{\sss}$ for some $\eps > 0$, for every $\bv \in \R^d$ there exists $\la > 0$, such that $\bv \subset \la B_{\eps}(\bo_d) \subset \la\conv{\sss} \subset \cone{\sss}$.
%\end{proof}

We will give two simple examples of sets that satisfy this property. These examples will be useful latter.

\begin{example}\label{ex1}
Let $\boe_1$, $\dots$, $\boe_d$ be a standard basis of $\R^d$. Define the set $\sss = \{\bv_1, \dots, \bv_{d+1}\}$ by
\[
\bv_j := \begin{cases}
				\boe_j,& \text{ for } 1 \leq j \leq d\\
				-\boe_1 - \dots - \boe_d,& \text{ for } j = d+1.\\
		\end{cases}
\]
Since $\{\bv_j, 1 \leq j \leq d\}$ is the standard basis of $\R^d$, and
\[
-\bv_k = \sum_{\substack{j=1\\j \ne k}}^{d+1}\bv_j,
\] one sees immediately that $\cone{\sss} = \R^d$. 
\end{example}

\begin{example}\label{ex2} Let $\sss \subset \R^d$ be finite, $\bv \in \R^d$ arbitrary fixed vector and suppose that the matrix $M \in \R^{d \times d}$ is invertible. If $\cone{\sss}=\R^d$, then, for every sufficiently large $\la >0$, $\cone{\la M\sss +\bv}=\R^d$.
\end{example}

We need to restate previous definitions in terms of matrices. Recall that, for any two vectors $\bv, \bw$  with real entries and equal dimensions, one writes $\bv \leq \bw$ if every coordinate of $\bv$ is less or equal than the corresponding coordinate of $\bw$: $v_k \leq w_k$. In particular, $\bw \geq \bo_d$ means that all coordinates of $\bw$ are non-negative. Let $M \in \R^{s \times d}$ be the matrix whose rows consist of vectors from $\sss$, that is $M^T := \left(	\bv_1\,\bv_2\,\dots \bv_s\right)$.
%If one defines the $1$-norm of $\bv = (t_1, \dots, t_s) \in \R^s$ by $\norm{\bv}_1=\abs{t_1} + \dots + \abs{t_s}$,
Then one can rewrite the definition of the cone as %and the convex hull of $\sss$
\[
\cone{\sss} =\{ M^T\bu: \bu \in \R^s, \bu \geq \bo_s\}. %,\qquad 
%\conv{\sss} =\{ M^T\bu: \bu \in \R^s, \bu \geq \bo_s, \norm{\bu}_1 \leq 1\}.
\]
Now we can prove an important criterion on the compactness of the set solutions to the matrix inequality in terms of the dual condition on $M^T$.
\begin{lemma}\label{lemDual}
Let $M \in \R^{s \times d}$ be a real matrix with row vectors $\sss=\{\bv_1, \dots, \bv_s\} \in \R^d$ and fix a vector $\bb \in \R^s$, with $\bb \geq \bo_s$. Then the subset $\vv := \{\bx \in \R^d: M\bx \leq \bb\}$ of $\R^d$ is compact if and only if $\cone{\sss}=\R^d$ (equivalently $\bo_d \in \text{Int}\left(\conv{\sss}\right)$.
\end{lemma}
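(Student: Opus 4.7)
The plan is to prove the compactness criterion in two directions, using the fact that $\vv$ is automatically closed (as an intersection of finitely many closed half-spaces) and nonempty (since $\bb \geq \bo_s$ implies $\bo_d \in \vv$). The equivalence $\cone{\sss}=\R^d \iff \bo_d \in \inter{\conv{\sss}}$ has already been recalled in the paper, so I only need to treat one of the two characterizations; I will work with the cone formulation. The key duality to exploit is that $\cone{\sss}=\R^d$ is equivalent to its polar cone $\{\by\in\R^d : \bv_j\cdot \by \leq 0 \text{ for all } j\}$ being trivial.

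For the ``if'' direction, I would argue by contradiction. Suppose $\cone{\sss}=\R^d$ but $\vv$ is unbounded; pick a sequence $\bx_n\in \vv$ with $\norm{\bx_n}\to\infty$. The unit vectors $\by_n := \bx_n/\norm{\bx_n}$ have a convergent subsequence tending to some $\by$ with $\norm{\by}=1$. Since $M\bx_n\leq \bb$, dividing by $\norm{\bx_n}$ and passing to the limit yields $M\by \leq \bo_s$, i.e.\ $\bv_j\cdot\by \leq 0$ for every $j$. But $\cone{\sss}=\R^d$ provides coefficients $t_j\geq 0$ with $\by = \sum_j t_j \bv_j$, so
\[
1 = \norm{\by}^2 = \by \cdot \by = \sum_{j=1}^s t_j (\bv_j \cdot \by) \leq 0,
\]
a contradiction. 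Hence $\vv$ is bounded, and being closed it is compact.

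For the ``only if'' direction, suppose $\cone{\sss}\neq \R^d$. Since $\cone{\sss}$ is a closed convex cone properly contained in $\R^d$, a standard separation argument (or the theorem that a closed convex cone equals the polar of its polar) produces a nonzero vector $\by\in\R^d$ with $\bv_j\cdot \by \leq 0$ for every $1\leq j \leq s$; equivalently, $M\by \leq \bo_s$. Then for every $t\geq 0$ we have $M(t\by) = t\,M\by \leq \bo_s \leq \bb$, so the entire ray $\{t\by : t\geq 0\}$ lies in $\vv$, showing $\vv$ is unbounded and thus not compact.

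The main obstacle, although modest, is the ``only if'' direction, where one has to extract a direction of unboundedness from the failure of $\cone{\sss}=\R^d$. This is clean provided one knows that the polar of a closed convex cone is $\{\bo_d\}$ if and only if the cone is all of $\R^d$; invoking this (or equivalently a direct hyperplane separation between $\cone{\sss}$ and a point outside it, then passing to the origin by scaling) is the one nontrivial ingredient. The ``if'' direction is essentially a routine compactness-by-contradiction argument.
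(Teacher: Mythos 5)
Your proof is correct, but it takes a partly different route from the paper. For the direction $\cone{\sss}=\R^d \Rightarrow \vv$ compact, the paper argues directly and quantitatively: since $\pm\boe_j \in \cone{\sss}$, there are nonnegative vectors $\by_j,\bz_j\in\R^s$ with $\by_j^TM=\boe_j^T$ and $\bz_j^TM=-\boe_j^T$, and left-multiplying $M\bx\le\bb$ by these gives the explicit coordinate bounds $-\bz_j^T\bb\le x_j\le \by_j^T\bb$; your argument instead extracts a unit recession direction $\by$ with $M\by\le\bo_s$ from an unbounded sequence via Bolzano--Weierstrass and derives the contradiction $1=\norm{\by}^2=\sum_j t_j(\bv_j\cdot\by)\le 0$, which is equally valid but purely existential (no explicit bounds). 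For the converse, the two arguments are essentially the same duality in different clothing: the paper invokes Farkas' lemma to produce a nonzero $\bn$ with $M\bn\ge\bo_s$ and then runs the ray $-\la\bn$ inside $\vv$, while you invoke the bipolar/separation theorem for the closed convex cone $\cone{\sss}$ to get a nonzero $\by$ with $M\by\le\bo_s$ and use the ray $t\by$. The only point you gloss over is the closedness of $\cone{\sss}$, which your bipolar argument needs; this is true because finitely generated cones are closed (a Carath\'eodory-type fact of the same depth as Farkas' lemma, which the paper's citation of Farkas packages for free), so you should either cite that fact or separate a point $\bc\notin\cone{\sss}$ from the cone directly as the paper does.
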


\begin{proof}[Proof of Lemma \ref{lemDual}]``$\Leftarrow$'': Suppose that $\cone{\sss}=\R^d$. Obviously, $\vv$ is closed in $\R^d$ and $\bo_d \in \vv$. Let $\bx \in \vv$ be arbitrary. Since $\cone{\sss}=\R^d$, for each standard basis vector $\boe_j \in \R^d$, $1 \leq j \leq d$, one can find vectors $\by_j \geq \bo_s, \bz_j \ge \bo_s$ in $\R^s$, such that $\by_j^TM = \boe_j^T$, $\bz_j^TM = -\boe_j^T$. The left-multiplication on left and right sides of $M\bx \leq \bb$ by $\by_j$ and by $\bz_j$, respectively,  yields
\[x_j := \boe_j^T \bx = (\by_j^T M)\bx \leq \by_j^T\bb, \quad \text{ and } \quad -x_j = \boe_j^T (-\bx) = (\bz_j^T M)\bx \leq \bz_j^T\bb.
\]
As the coordinates $x_j$ of $\bx$ are bounded by $-\bz_j^T\bb \leq x_j \leq \by_j^T\bb$, the set $\vv$ must be compact.

\noindent ``$\Rightarrow$'': By contradiction, suppose that $\vv$ is compact but $\sss$ has no proper enclosure. Since $\cone{\sss} \ne\R^d$, there exists $\bc \in \R^d$, such that the linear problem $M^T\by = \bc$ has no solution $\by \in \R^s$ with $\by \geq \bo_s$. Then, by \emph{Farkas lemma} (named after \cite{Far}, in modern form, stated in \cite{GaKuTu}*{Lemma~1 on p.~318}), the dual linear problem $M \bx \geq \bo_s$ has a non-zero solution $\bx = \bn \in \R^d$, such that $\bn^T\bc < 0$, (geometrically, a hyper-plane with the normal vector $\bn \in \R^d$ through the origin separates $\bc$ from $\cone{\sss}$). Since $\bb \geq \bo_s$,  this means one can find arbitrarily large solutions  to the system $M\bx \leq \bb$ by taking $\bx = -\la \bn \in \vv$ and arbitrarily large $\la > 0$. This contradicts the compactness of $\vv$.
\end{proof}
An easy consequence of Lemma \ref{lemDual} is the following:

\begin{remark}\label{rem:rs} If the set of solutions $\vv := \{\bx \in \R^d: M\bx \leq \bb\}$ is compact, then the set $\sss=\text{Columns}(M^T) \subset \R^d$ contains at least $s \geq d+1$ vectors, and some $d$ of these are linearly independent among each other.
\end{remark}

Lemma \ref{lemDual} is the main ingredient of the following result, which will have very important application later. 

\begin{corollary}\label{corComp} Suppose that $\sss \subset \R^d$ is finite. Then the set of vectors $\bx \in \R^d$ which, for every $\bv \in \sss$, satisfy
\[
\norm{\bx - \bv} \geq \norm{\bx}
\] is compact if and only if $\cone{\sss} =\R^d$.
\end{corollary}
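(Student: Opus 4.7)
The plan is to translate the Euclidean norm condition into the linear matrix inequality that appears in Lemma \ref{lemDual}, and then invoke that lemma directly.

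First, I would expand each constraint. For a fixed $\bv\in\sss$, squaring both sides of $\norm{\bx-\bv}\ge\norm{\bx}$ and simplifying gives
\[
\norm{\bx}^2 - 2\bv^T\bx + \norm{\bv}^2 \ge \norm{\bx}^2,
\]
which is equivalent to $\bv^T\bx \le \tfrac12\norm{\bv}^2$. So the set in question equals
\[
\vv := \bigl\{\bx\in\R^d : \bv^T\bx \le \tfrac12\norm{\bv}^2 \text{ for all } \bv\in\sss\bigr\}.
\]

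Second, I would stack these inequalities into matrix form. Let $\sss=\{\bv_1,\dots,\bv_s\}$ and set $M\in\R^{s\times d}$ to be the matrix whose rows are $\bv_1^T,\dots,\bv_s^T$, and $\bb:=\tfrac12(\norm{\bv_1}^2,\dots,\norm{\bv_s}^2)^T\in\R^s$. Then $\vv=\{\bx\in\R^d: M\bx\le\bb\}$, and crucially $\bb\ge\bo_s$ since each $\tfrac12\norm{\bv_j}^2\ge 0$. So Lemma \ref{lemDual} applies verbatim and yields that $\vv$ is compact if and only if $\cone{\sss}=\R^d$, which is exactly the corollary.

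The only minor subtlety — and it is not really an obstacle — is the degenerate case where $\bo_d\in\sss$: the corresponding row of $M$ is zero and the corresponding entry of $\bb$ is zero, so the inequality $\bo_d^T\bx\le 0$ is vacuously true and contributes no constraint to $\vv$. At the same time, removing or inserting $\bo_d$ in $\sss$ does not change $\cone{\sss}$, so the equivalence is unaffected. Hence no separate treatment is needed, and the corollary follows immediately from Lemma \ref{lemDual} once the constraints are rewritten.
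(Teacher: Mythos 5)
Your proposal is correct and follows essentially the same route as the paper: rewrite each constraint as $\bv^T\bx \leq \tfrac12\norm{\bv}^2$, stack the inequalities into $M\bx \leq \bb$ with $\bb \geq \bo_s$, and apply Lemma~\ref{lemDual}. The extra remark about the degenerate case $\bo_d \in \sss$ is a harmless addition the paper omits.
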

\begin{proof}[Proof of Corollary \ref{corComp}] The inequality is equivalent to
$\norm{\bx}^2 - 2\bv^T\bx+\norm{\bv}^2 \geq \norm{\bx}^2$, or,
$\bv^T\bx \leq \frac12\norm{\bv}^2$, for each $\bv \in\sss$. Suppose that $\sss = \{\bv_1, \dots, \bv_s\}$ and let $M \in \R^{s \times d}$ be the matrix whose rows consist of vectors from $\sss$, that is $M^T := \left(	\bv_1\,\bv_2\,\dots \bv_s\right)$.
One obtains a system of linear inequalities
$M\bx \leq \bb$ where the non-negative vector $\bb \in \R^s$ is defined by $b_j := 1/2\norm{\bv_j}^2$. Now apply Lemma \ref{lemDual}.
\end{proof}

\subsection{Generalized rotation and its invariant norm}\label{secGenRot}
We now give the formal definition of the main objects of the present section.

\begin{definition}[Generalized rotation]\label{defRot}
A real square matrix $M \in \R^{d \times d}$ is called {\em generalized rotation}, if $M$ is similar to some orthogonal matrix $R$.
\end{definition}

It is known that matrices $M \in \R^{d \times d}$ that are diagonalizable over $\C$, with all eigenvalues $\la \in \C$ of absolute value $\abs{\la}=1$ are general rotations; see, for instance 
%Section 13 on linear operators in Euclidean space, equations (102)--(106) from Chapter 9 in
\cite{Gant}*{[Chapter 9, Section 13, Equations (102)--(106)]}. For a general rotation $M$ there always exists an invertible real transformation $T \in \R^{d\times d}$,  such that $M := TRT^{-1}$ takes a block-diagonal form with $1 \times 1$ blocks $(\pm 1)$ or $2 \times 2$ blocks
\[
\begin{pmatrix}
\cos{\theta} & -\sin{\theta}\\
\sin{\theta} & \cos{\theta}
\end{pmatrix}, \; \theta \in [0, 2\pi)
\] 
that correspond to eigenvalues $\la =e^{i\theta}$. Such $R$ is a composition of mutually orthogonal rotations and is itself an orthogonal matrix, {\it i.e.}, $R^TR = \mathds{1}_d$. Details of such decomposition are outlined in \cite{Gant}*{Chapter~9, Section~13}), equations (112)--(113) therein. Furthermore, orthogonal matrices $R$ can be parametrized using Cayley formulas \cite{Gant}*{Chapter~9, equations (123)--(126)} ({\it cf.} \cites{Rei, LieOsb}). Explicit parametrization formulas in small dimensions $d=3, 4$ are available \cites{Pall, Sho, Crem, duVal, Meb}.   

% Then there exist $s$ linearly independent eigenvectors of $M$. If $\la \in \R$, we can assume that the corresponding eigenvector of $M$ $\bu \in \R^s$. If $\la \not\in \R$, write $\la = \al + i\be$, $\al, \be \in \R$ and $\bu = \bv + i\bw$, with $\bv, \bw \in \R^s$.  Since non-real compex eigenvalues of $M$ come in complex-conjugate pairs, one can build another basis $\mathcal{B}$ of $\R^n$ that consists of the real eigenvectors or the real and imaginary parts $\bv$, $\bw \in \R^s$ of complex eigenvectors $\bu \in \C^s$ of the matrix $M$. If $T$ is an $s \times s$ real matrix whose columns are all vectors from $\mathcal{B}$, then it can be easily seen that $M = TRT^{-1}$, where the matrix $R \in \R^{s \times s}$ has In case all the eigenvalues of $\la$ satisfy $\abs{\la} =1$, then $\la = \pm 1$ whenewher $\la \in \R$, or $\la = \cos{\phi} + i\sin{\phi}$, for some $\phi \in (0, \pi) \cup (\pi, 2\pi)$ whenever $\la \not\in \R$. 
% Hence, we just have proved:
% \begin{proposition}\label{critOrt}
% If all eigenvalues $\la \in \C$ of the matrix $M \in \R^{s \times s}$ are simple and of absolute value $1$, then $M$ is a general rotation.
% \end{proposition}

If $M$ is a general rotation, then there exist an $M$-invariant norm on $\R^d$. Indeed, let $T$ be the aforementioned transformation that brings $M$ to its rotational form $R$. Define
$\norm{\bx}_{M} := \norm{T^{-1}\bx}$, where $\norm{\cdot}$ is the usual Euclidean norm in $\R^d$. Then
\[
\norm{M\bx}_{M} = \norm{T^{-1}M\bx}= \norm{(T^{-1}MT)T^{-1}\bx)} = \norm{RT^{-1}\bx} = \norm{T^{-1}\bx} = \norm{\bx}_{M},
\] since $R$ is orthogonal, and $\norm{R\by}=\norm{\by}$ for any $\by \in \R^d$.

One important fact that we will need in the sequel: Corollary \ref{corComp} of previous Section \ref{secConvex} still holds true if one replaces the Euclidean norm $\norm{\cdot}$ by this new norm $\norm{\cdot}_{M}$.

\begin{corollary}\label{corMinv} Let $\sss \subset \R^d$ be a finite set and suppose that $M \in \R^{d \times d}$ is a generalized rotation. Then the set of vectors $\bx \in \R^d$ which, for every $\bv \in \sss$, satisfy $\norm{\bx - \bv}_{M} \geq \norm{\bx}_{M}$ in $M$-invariant norm $\norm{\cdot}_M$ of $\R^d$ is compact if and only if $\cone{\sss}=\R^d$.
\end{corollary}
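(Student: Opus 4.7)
The natural plan is to reduce this $M$-invariant statement to the Euclidean version already proved in Corollary \ref{corComp} by exploiting the very transformation $T \in \R^{d \times d}$ used to \emph{define} the invariant norm. Recall that $\norm{\bx}_M = \norm{T^{-1}\bx}$, where $T$ is the invertible real matrix with $T^{-1}MT = R$ orthogonal. So under the linear change of variables $\by := T^{-1}\bx$ and $\bw := T^{-1}\bv$, the inequality $\norm{\bx - \bv}_M \ge \norm{\bx}_M$ becomes exactly $\norm{\by - \bw} \ge \norm{\by}$ in the Euclidean norm.

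Therefore, writing $\sss' := T^{-1}\sss = \{T^{-1}\bv : \bv \in \sss\}$, the set
\[
\vv_M := \{\bx \in \R^d : \norm{\bx-\bv}_M \ge \norm{\bx}_M \text{ for all } \bv \in \sss\}
\]
is the image under $T$ of the Euclidean set
\[
\vv := \{\by \in \R^d : \norm{\by-\bw} \ge \norm{\by} \text{ for all } \bw \in \sss'\}.
\]
Since $T$ is a linear isomorphism of $\R^d$, it is a homeomorphism, and hence $\vv_M$ is compact if and only if $\vv$ is compact. By Corollary \ref{corComp}, $\vv$ is compact if and only if $\cone{\sss'} = \R^d$.

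It remains to observe that $\cone{\sss'} = \R^d$ if and only if $\cone{\sss} = \R^d$. This is immediate from the linearity of $T^{-1}$: non-negative linear combinations are preserved under linear maps, so $\cone{T^{-1}\sss} = T^{-1}\cone{\sss}$. Since $T^{-1}$ is a bijection of $\R^d$ onto itself, the equality $\cone{\sss} = \R^d$ holds if and only if $T^{-1}\cone{\sss} = \R^d$, i.e.\ if and only if $\cone{\sss'} = \R^d$. Chaining the equivalences gives the claim. No step here seems to present a real obstacle; the entire argument is essentially a coordinate change followed by invoking Corollary \ref{corComp}, and the only conceptual point is to notice that both the compactness condition and the cone condition are invariant under applying the isomorphism $T^{-1}$.
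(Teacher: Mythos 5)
Your proposal is correct and follows essentially the same route as the paper: transform via $T^{-1}$ to turn the $M$-invariant inequality into the Euclidean one for $T^{-1}\sss$, note that the invertible linear map preserves both compactness and the cone (equivalently, interior-of-convex-hull) condition, and invoke Corollary \ref{corComp}. No gaps.
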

\begin{proof}[Proof of Corollary \ref{corMinv}] The $M$-invariant norm inequality reads
$\norm{T^{-1}\bx - T^{-1}\bv} \geq \norm{T^{-1}\bx}$ in Euclidean norm, where $T \in \R^{d \times d}$ is invertible. By continuity, $T^{-1}$ and $T$ both preserve the open inclusion of $\bo_d$ between $\sss$ and $T^{-1}\sss$, as well as the compactness. Hence, Corollary \ref{corComp} applies.
\end{proof}

\subsection{Finiteness property for generalized rotations}

In this section, we assume that $A \in \Q^{d \times d}$ is a generalized rotation, characterized in Definition \ref{defRot} of Section \ref{secGenRot}. Generalized rotations serve as a fundamental building block to construct digit systems with the finiteness property for general rational matrix with eigenvalues of absolute value $= 1$.
For general rotation bases, we introduce digit sets with special geometric properties.
\begin{definition}[Good enclosure]\label{defConv}
The digit set $\bdd \subset \Z^d[A]$ is said to have \emph{a good enclosure}, if, for each coset representative  $\bx \in \Z^d[A]/A\Z^d[A]$, the subset
\[
\bdd(\bx) := \left\{\bd \in \bdd: \bd \equiv \bx \pmod{A\Z^d[A]}\right\}
\]
satisfies $\cone{\bdd(\bx)}=\R^n$, or, equivalently: the origin $\bo_d$ always lies in the interior of the convex hull of $\bdd(\bx)$ for any given residue class modulo $\pmod{A\Z^d[A]}$. 
\end{definition}

If $A$ is general rotation, then $A^{-1}$ is also general rotation. Pick an $A^{-1}$-invariant norm $\norm{\cdot}_{A^{-1}}$ in $\R^d$, defined in Section~\ref{secGenRot}. Enumerate the digits
$\bdd =\{ \bd_1, \dots, \bd_k\}$ and define \emph{the digit function} $\bd: \Z^d[A] \to \bdd$ by
\begin{equation}\label{eq:defDigitF}
\bd(\bx) := \bd_j \in \bdd, \text{ such that }
\begin{cases}
	\bd_j \equiv \bx \pmod{A\Z^d[A]},\\
	\bd_j \text{ minimizes} \norm{\bx - \bd_j}_{A^{-1}} \text{ in } \bdd,\\
	\text{the index $j$ is the smallest possible}.
\end{cases}  
\end{equation}
The smallest index condition is to impose uniqueness in cases where the norm-minimum is achieved for several $\bd_j \in \bdd(\bx)$. The quotient function $\Phi: \Z^d[A] \to \Z^d[A]$ used in the division with remainder for the digit sets with good enclosures is defined as in \eqref{eq:defRemDiv}.

Now comes the most important result of this section.

\begin{theorem}\label{thmPeriodic} Let $A \in \Q^{d \times d}$ be a generalized rotation. If $\bdd$ has a good enclosure, then the remainder division $\Phi: \Z^d[A] \to \Z^d[A]$ with the digit function from \eqref{eq:defDigitF} has a finite attractor set $\aa_\Phi \subset \Z^d[A]$. In particular, $\Phi$ is ultimately periodic mapping with a finite number of possible smallest periods.
\end{theorem}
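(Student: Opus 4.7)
The plan is to exploit the $A^{-1}$-invariant norm $\|\cdot\|_{A^{-1}}$ from Section~\ref{secGenRot} in order to control the sizes of orbit points, and then combine this with Lemma~\ref{lem:RemDiv} to pin the orbits down to finitely many lattice points. Since $A^{-1}$ is a generalized rotation with the same similarity transform as $A$, the invariance $\|A^{-1}\by\|_{A^{-1}} = \|\by\|_{A^{-1}}$ combined with \eqref{eq:defRemDiv} yields the basic identity
\[
\|\Phi(\bx)\|_{A^{-1}} = \|\bx - \bd(\bx)\|_{A^{-1}} \qquad (\bx \in \Z^d[A]),
\]
so that the whole question reduces to comparing the distance from $\bx$ to the chosen digit with $\|\bx\|_{A^{-1}}$ itself.

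For each residue class $c \in \Z^d[A]/A\Z^d[A]$ (a finite group by Lemma~\ref{lem:Res}), let $\bdd_c \subseteq \bdd$ be the digits in class~$c$; by the good enclosure property $\cone{\bdd_c} = \R^d$, and Corollary~\ref{corMinv} applied with $M = A^{-1}$ shows that
\[
K_c := \left\{\bx \in \R^d : \|\bx - \bd\|_{A^{-1}} \geq \|\bx\|_{A^{-1}} \text{ for every } \bd \in \bdd_c\right\}
\]
is compact. Setting $N := \bigcup_c K_c$, $R := \sup_{\bx \in N}\|\bx\|_{A^{-1}}$, and $M' := \max_{\bd \in \bdd}\|\bd\|_{A^{-1}}$, one immediately gets two complementary estimates: for any $\bx \in \Z^d[A] \setminus N$, the failure of the defining inequality of $K_c$ combined with the norm-minimizing property of $\bd(\bx)$ in $\bdd_c$ forces $\|\Phi(\bx)\|_{A^{-1}} < \|\bx\|_{A^{-1}}$, whereas for $\bx \in N$ one has the trivial bound $\|\Phi(\bx)\|_{A^{-1}} \leq \|\bx\|_{A^{-1}} + M' \leq R + M'$.

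I then plan to show that every orbit eventually visits $N$ by a short contradiction argument leaning on Lemma~\ref{lem:RemDiv}: if $\Phi^n(\bx)$ were to stay outside $N$ forever, the sequence $\|\Phi^n(\bx)\|_{A^{-1}}$ would be strictly decreasing, hence bounded by $\|\bx\|_{A^{-1}}$; but Lemma~\ref{lem:RemDiv} furnishes $l$ and $n_0$ with $\Phi^n(\bx) \in \Z_l^d[A]$ for $n \geq n_0$, so the tail of the orbit lies in the \emph{finite} set $\Z_l^d[A] \cap \{\by : \|\by\|_{A^{-1}} \leq \|\bx\|_{A^{-1}}\}$, which forces a repetition and contradicts strict monotonicity. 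Once the orbit has entered $N$, the bounds above keep all subsequent iterates inside the $A^{-1}$-ball of radius $R^* := R + M'$ forever, so that, again by Lemma~\ref{lem:RemDiv}, the tail of every orbit is contained in the finite set $\Z_l^d[A] \cap \{\by : \|\by\|_{A^{-1}} \leq R^*\}$. Therefore the attractor $\aa_\Phi$ is finite, and Proposition~\ref{prop:finAttr} delivers the ultimate periodicity with finitely many possible smallest periods. I expect the main subtlety to be reconciling the two regimes ``norm strictly decreases outside $N$'' and ``norm may rebound by up to $M'$ inside $N$'' into a single uniform bound $R^*$ valid for every orbit; the contradiction argument via the lattice $\Z_l^d[A]$ is what glues these two regimes together.
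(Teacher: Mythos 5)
Your proposal is correct and follows essentially the same route as the paper's proof: the $A^{-1}$-invariant norm, good enclosure plus Corollary~\ref{corMinv} to make the region of non-decreasing norm compact, Lemma~\ref{lem:RemDiv} to confine orbit tails to the lattice $\Z_l^d[A]$, and an absorption argument into a ball of radius $R+M'$ (the paper's critical radius $r$) whose lattice points form the finite attractor. Your per-residue-class sets $K_c$ and the explicit visit-$N$-by-contradiction step are just a slightly more detailed packaging of the paper's set $\vv$ and its ``cannot decrease forever in a lattice'' argument.
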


\begin{proof}[Proof of Theorem \ref{thmPeriodic}]
First we show that the set $\vv := \left\{ \bx \in \Z^d[A]: \norm{\Phi(\bx)}_{A^{-1}} \geq \norm{\bx}_{A^{-1}} \right\}$ is bounded in $\R^d$. By $A^{-1}$-invariance, $\norm{\Phi(\bx)}_{A^{-1}} = \norm{\bx - \bd(\bx)}_{A^{-1}}$. For each $\bx \in \vv$, $\bdd(\bx) \ne \varnothing$ by Definition \ref{defConv} and $\norm{\bx - \bd_j}_{A^{-1}} \geq \norm{\bx}_{A^{-1}}$
holds for each digit $\bd_j \in \bdd(\bx)$ by \eqref{eq:defDigitF}. Since $\bdd(\bx)$ has good enclosure, by Corollary \ref{corMinv}, the set of vectors $\bx \in \R^d$ that satisfy the last stated norm inequality for each $\bd_j \in \bdd(\bx)$ must be compact. Therefore $\vv$ must be bounded, and we can define the critical radius
$r := r(\vv, \bdd) := \sup_{\bx \in \vv}\norm{\bx}_{A^{-1}}+\max_{\bd \in \bdd}\norm{\bd}
_{A^{-1}}$.

By Lemma \ref{lem:RemDiv}, there exists $l \in \N$, such that the orbit $\{\bx_n := \Phi^n(\bx)\}_{n=1}^{\infty}$ of every point $\bx \in \Z^d[A]$ eventually reaches and then stays in the lattice $\Z_l^d[A]$. Since $\Z_l^d[A]$ is a lattice, one cannot have $\norm{\bx_{n+1}} < \norm{\bx_n}$ for every $n > n_0$. Therefore, $\norm{\bx_{n+1}} \geq \norm{\bx_n}$ occurs infinitely many times. For such $n$, one must have $\bx_n \in \vv \cap \Z_l^d[A]$. At the next step after such occurrence, we have
\[
\norm{\bx_{n+1}}_{A^{-1}}=\norm{\Phi(\bx_n)}_{A^{-1}}=\norm{\bx_n-\bd(\bx_n)}_{A^{-1}}\leq \norm{\bx_n}_{A^{-1}}+\norm{\bd(\bx_n)}_{A^{-1}} \leq r(\vv, \bdd).
\]
In subsequent iterations, we have either $\bx_{j+1} \in \vv$ again, or the $\norm{\bx_{j+1}}_{A^{-1}} < \norm{\bx_{j}}_{A^{-1}}$. Therefore, $\Phi^n(\bx)$ ultimately enters the ball $B_{A^{-1}}(\bo, r)$ and then never leaves it. Since $\Z_l^d[A]$ is a lattice, $B_{A^{-1}}(0, r) \cap \Z_l^d[A]$ must be finite. Hence, $\Phi$ has a finite attractor set $\aa_{\Phi} \subset B_{A^{-1}}(0, r) \cap \Z_l^d[A]$. The last statement of Theorem \ref{thmPeriodic} follows from the fact that $\Phi^{n+1}(\bx)$ is uniquely determined by $\Phi^n(\bx)$.
\end{proof}

\begin{corollary}\label{corSmallD}
For every general rotation $A \in \Q^{d \times d}$, there exists a digit set $\bdd \subset \Z^d[A]$ of the size $\#\bdd = (d+1)\left[\Z^d[A]:A\Z^d[A]\right]$, such that the division mapping $\Phi: \Z^d[A] \to \Z^d[A]$ is ultimately periodic and has finite attractor. In particular, one can find such digit sets in $\Z^d$.
\end{corollary}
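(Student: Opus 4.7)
The plan is to reduce the statement directly to Theorem~\ref{thmPeriodic} by explicitly constructing a digit set $\bdd \subset \Z^d$ that has a good enclosure in the sense of Definition~\ref{defConv} and contains exactly $d+1$ digits per residue class modulo $A\Z^d[A]$. Theorem~\ref{thmPeriodic} will then immediately deliver the finite attractor and the ultimate periodicity of $\Phi$.

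First, by Lemma~\ref{lem:Res} I can fix a system of representatives $\br_1,\dots,\br_m \in \Z^d$ for the finite Abelian group $\Z^d[A]/A\Z^d[A]$, where $m:=[\Z^d[A]:A\Z^d[A]]$. Next I invoke Example~\ref{ex1}, which furnishes a ``star'' $\sss=\{\boe_1,\dots,\boe_d,-\boe_1-\cdots-\boe_d\}\subset\Z^d$ satisfying $\cone{\sss}=\R^d$. Let $q\in\N$ be a common denominator of the entries of $A$, so that $qA\in\Z^{d\times d}$. By Example~\ref{ex2} applied with $M=A$ and $\bv=\br_i$, for each $i$ there is some threshold beyond which $\cone{\la A\sss+\br_i}=\R^d$; since there are only finitely many indices $i$, I can pick a single positive integer $\la\in q\N$ that is large enough to work simultaneously for all $i=1,\dots,m$.

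With this $\la$ fixed, define
\[
\bdd(\br_i) \,:=\, \la A\sss + \br_i \,=\, \{\br_i+\la A\bv : \bv\in\sss\}, \qquad i=1,\dots,m,
\]
and set $\bdd:=\bigcup_{i=1}^m\bdd(\br_i)$. Since $\la\in q\N$, we have $\la A\in\Z^{d\times d}$, so every element of $\bdd(\br_i)$ lies in $\Z^d$. Moreover $\la A\bv=A(\la\bv)\in A\Z^d\subset A\Z^d[A]$ for $\bv\in\sss\subset\Z^d$, which shows that every element of $\bdd(\br_i)$ is congruent to $\br_i$ modulo $A\Z^d[A]$. As $A$ is invertible, the $d+1$ vectors in each $\bdd(\br_i)$ are pairwise distinct, and elements coming from different cosets are trivially distinct. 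Thus $\#\bdd=(d+1)m=(d+1)[\Z^d[A]:A\Z^d[A]]$, and Definition~\ref{defConv} is satisfied by the choice of $\la$. Theorem~\ref{thmPeriodic} now applies and yields the finite attractor. The ``in particular'' clause is immediate, since $\bdd\subset\Z^d$ by construction.

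The main obstacle is really only the simultaneous juggling of the three requirements on $\la$: it must be an \emph{integer} (so that each $\la A\bv$ lies in $A\Z^d[A]$ and the cosets of the digits are correct), a \emph{multiple of the denominator} $q$ (so that the digits are in $\Z^d$ rather than merely in $\Z^d[A]$), and \emph{large enough} (so that the dilated star encloses the origin once translated by every $\br_i$). All three conditions are mutually compatible because Example~\ref{ex2} tolerates any sufficiently large $\la$, and $q\N$ is an unbounded subset of $\N$. Everything else in the argument is a direct appeal to results already established earlier in the paper.
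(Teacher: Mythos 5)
Your proposal is correct and follows essentially the same route as the paper: the paper's proof also takes the star $\sss$ of Example~\ref{ex1}, a set of integer coset representatives furnished by Lemma~\ref{lem:Res}, and the digit set $sA\sss+\rr$ with $s$ a sufficiently large integer divisible by all denominators of $A$, then applies Example~\ref{ex2} and Theorem~\ref{thmPeriodic}. Your write-up merely makes explicit the coset, integrality, and cardinality checks that the paper leaves implicit.
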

\begin{proof}[Proof of Corollary \ref{corSmallD}] Let $\rr$ be a complete set of residue class representatives of the residue class ring $\Z^d/(\Z^d \cap A\Z_k^d[A])$ and take $\sss = \{\boe_1, \dots, \boe_d, -\boe_1-\dots-\boe_d\}$ from Example \ref{ex1} of Section \ref{secConvex}. For every sufficiently large integer $s \in \N$, that is divisible by all denominators of the rational entries of the matrix $A$, the digit set $\bdd := sA\sss + \rr \subset \Z^d$ contains all residue classes $\pmod{A\Z^d[A]}$ and satisfies the conditions of Theorem~\ref{thmPeriodic}, as explained in Example \ref{ex2} of Section~\ref{secConvex}. Now apply Theorem~\ref{thmPeriodic}.
\end{proof}

\begin{corollary}\label{cor:finRot}
For every generalized rotation $A \in \Q^{d \times d}$, there exist digit sets $\bdd \subset \Z^d[A]$ (and even $\bdd \subset \Z^d$), such that the digit system $(A, \bdd)$ has the finiteness property in $\Z^d[A]$.
\end{corollary}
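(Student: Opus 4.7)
The plan is to deduce Corollary \ref{cor:finRot} in two steps from material already established: Proposition \ref{prop:finAttr} converts the finite-attractor conclusion of Corollary \ref{corSmallD} into the finiteness property, and an analogous restricted argument together with Lemma \ref{lem:attrExt} upgrades the resulting digit set to one inside $\Z^d$.

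For the first claim ($\bdd \subset \Z^d[A]$), I would first invoke Corollary \ref{corSmallD} to produce a digit set $\bdd_0 \subset \Z^d \subset \Z^d[A]$ for which the quotient map $\Phi$ has a finite attractor $\aa_{\Phi} \subset \Z^d[A]$. Proposition \ref{prop:finAttr} then immediately gives the finiteness property for the digit system $(A, \bdd_0 \cup \aa_{\Phi})$, and by construction $\bdd_0 \cup \aa_{\Phi} \subset \Z^d[A]$.

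For the sharper claim $\bdd \subset \Z^d$, the only obstacle is that the attractor $\aa_{\Phi}$ above need not be contained in $\Z^d$. To bypass this, I would rerun the whole construction for the restricted dynamical system $\Phi_{\mathrm{r}}: \Z^d \to \Z^d$ from Definition \ref{def:Restr}. Choose a complete set $\rr'$ of coset representatives of $\Z^d/(\Z^d \cap A\Z^d)$ (finite, since $A$ is invertible) and set
\[
\bdd_1 := sA\sss + \rr' \subset \Z^d,
\]
where $\sss = \{\boe_1,\ldots,\boe_d,-\boe_1-\cdots-\boe_d\}$ is the set from Example \ref{ex1} and $s \in \N$ is large enough to clear all denominators of $A$ and, via Example \ref{ex2}, to guarantee that $\cone{sA\sss + \br} = \R^d$ for each $\br \in \rr'$. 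Since $sA\sss \subset \Z^d \cap A\Z^d$, the intersection of $\bdd_1$ with the coset $\br + (\Z^d \cap A\Z^d)$ is exactly $sA\sss + \br$, so $\bdd_1$ has good enclosure relative to the auxiliary lattice $\Z^d \cap A\Z^d$. The proof of Theorem \ref{thmPeriodic} then transfers verbatim to $\Phi_{\mathrm{r}}$ with $\Z^d$ and $\Z^d \cap A\Z^d$ replacing $\Z^d[A]$ and $A\Z^d[A]$: the $A^{-1}$-invariant norm is unchanged, Corollary \ref{corMinv} still bounds the set where $\Phi_{\mathrm{r}}$ fails to contract in this norm, and the Lemma \ref{lem:RemDiv}-step becomes trivial because $\Phi_{\mathrm{r}}$ already preserves the fixed lattice $\Z^d$. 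One concludes that $\Phi_{\mathrm{r}}$ has a finite attractor $\aa_{\Phi_{\mathrm{r}}} \subset \Z^d$, and Lemma \ref{lem:attrExt} converts this into the finiteness property for $(A, \bdd_1 \cup \aa_{\Phi_{\mathrm{r}}})$ on the full module $\Z^d[A]$, with $\bdd_1 \cup \aa_{\Phi_{\mathrm{r}}} \subset \Z^d$ as required.

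The only mildly delicate point is the claim that the proof of Theorem \ref{thmPeriodic} transfers without change to the restricted setting. Its key ingredients---an invariant norm, the compactness criterion of Corollary \ref{corMinv} from good enclosure, and the lattice finiteness of the eventually trapped iterates---are insensitive to whether one works on $\Z^d[A]$ or on its sublattice $\Z^d$, so this is essentially an inspection rather than a real obstacle.
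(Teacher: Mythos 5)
Your argument is correct and coincides with the paper's own route: the first half (Corollary~\ref{corSmallD} combined with the second part of Proposition~\ref{prop:finAttr}) is exactly the published one-line proof. Your second half makes explicit what the paper delegates to Remark~\ref{rem:rotRestr} together with Lemma~\ref{lem:attrExt} --- rerunning the good-enclosure construction for the restricted map $\Phi_{\mathrm{r}}$ modulo the auxiliary lattice $\Z^d \cap A\Z^d$ so that the attractor, and hence the whole digit set, lies in $\Z^d$ --- and in doing so it correctly handles the point (that $\aa_{\Phi}$ itself need not be contained in $\Z^d$) which the paper's terse proof leaves implicit.
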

\begin{proof}[Proof of Corollary \ref{cor:finRot}] Consider the digit set $\bdd'$ used in Corollary \ref{corSmallD} and apply the second part of Proposition \ref{prop:finAttr} to $\bdd := \bdd' \cup \aa_{\Phi}$.
\end{proof}

\begin{remark}\label{rem:rotRestr}
All results of Section \ref{secPerGenRot} for generalized rotations $A \in \Q^{d \times d}$ remain true for $\Phi_{\mathrm{r}}$ in $\Z^d$ with respect to auxiliary lattice $\Z^d \cap A\Z^d$, provided that one defines the good arithmetical enclosure property in $\Z^d$ $\pmod{\Z^d \cap A\Z^d}$ instead of $\pmod{A\Z^d[A]}$ in Definition \ref{defConv} and  Eq.\eqref{eq:defDigitF}, and substitutes $\Z^d$ in place of $\Z^d[A]$ where appropriate.
\end{remark}

\section{Twisted digits systems for hypercompanion matrices}\label{sec:twist}

In the present section we show how we can build up number systems with property (F) from simple building blocks. We develop a certain version of a direct sum that generalizes the notion of \emph{simultaneous digit systems} previously considered by K\'ovacs \cites{Kov7, Kov8, Kov9}; see also \cite{SSTW}*{Section~3} and \cite{Woe1} for related \emph{products of number systems} in polynomial rings. This will enable us to construct digit sets with Property (F) for the hypercompanion block matrices.

First, let us recall the notations of 'the direct sum' of vectors and matrices. For two vectors $\bx \in \Q^m$ and $\by \in \Q^n$, their direct sum is a block vector $\bx \dsum \by := \vectwo{\bx}{\by} \in \Q^{m+n}$. This notation extends to the sets of vectors $\mathcal{S} \subset \Q^m$, $\mathcal{T} \subset \Q^n$ by
\[
\mathcal{S} \dsum \mathcal{T} := \{ \bx \dsum \by: \bx \in \mathcal{S}, \by \in \mathcal{Y}\},
\] for instance, $\Q^m \dsum \Q^n = \Q^{m+n}$.
Secondly, the direct sum of two matrices, say, $A \in \Q^{m \times m}$ and $B \in \Q^{n \times n}$ is defined as the block matrix
\[
A \dsum B = \begin{pmatrix}
                A & O\\
                O & B.
            \end{pmatrix}
\]
\subsection{Semi--direct (twisted) sums of digit systems}\label{subsec:twistsum} Let $A$, $B$ be two invertible rational matrices of dimensions $m\times m$, $n \times n$, respectively, let $O$ be the $m \times n$ zero-matrix, and $C$ be an $n \times m$ integer matrix. By block multiplication we see that the partitioned matrices
\[
M =\begin{pmatrix}
    A & O\\
    C & B 
\end{pmatrix}, \qquad
M^{-1} =\begin{pmatrix}
    A^{-1}         & O\\
    -B^{-1}CA^{-1} & B^{-1}\\ 
    \end{pmatrix},
\]
are inverse to each other. We will call the partioned matrix $M$ \emph{the sum of $A$ and $B$, twisted by $C$} (a semi--direct sum, or a twisted sum  for short). This will be abbreviated as $M := A \oplus_C B$.

Suppose that $\bdd_A \in \Z^m$ and $\bdd_B \in \Z^n$ are the digit sets satisfying $\Z^m/(\Z^m \cap A\Z^m) \subset \bdd_A$ and $\Z^n/(\Z^m \cap B\Z^m) \subset \bdd_B$. This containment can be strict: digit sets are allowed to be larger than residue sets.

Associated with $(A, \bdd_A)$ and $(B, \bdd_B)$ are the corresponding \emph{restricted digit functions} together with the dynamical systems 
\begin{alignat}{5}
\bd_{\mathrm{r},A}:&&\Z^m \to \bdd_A,& \quad \bd_{\mathrm{r},A}(\bx) \equiv \bx \pmod{\Z^d \cap A\Z^d}, \\
\Phi_{\mathrm{r},A}:&& \Z^m \to \Z^m,& \quad \Phi_{\mathrm{r},A}(\bx) = A^{-1}(\bx - \bd_{\mathrm{r},A}(\bx)),
\\
\bd_{\mathrm{r},B}:&& \Z^n \mapsto \bdd_{B},& \quad \bd_{\mathrm{r},B}(\by) \equiv \by \pmod{\Z^d \cap B\Z^d}, \\
\Phi_{\mathrm{r},B}:&& \Z^n \mapsto \Z^n,& \quad \Phi_{\mathrm{r},B}(\by) = B^{-1}(\by - \bd_{\mathrm{r},B}(\by)),
\end{alignat}
for $\bx \in \Z^m$ and $\by \in \Z^n$. They perform division with remainder in the lattices $\Z^m$ and $\Z^n$, respectively; see Definition \ref{def:Restr} in Section \ref{secArithm}.

\smallskip
We will build the corresponding digit set and restricted remainder division by $M$ in $\Z^m \dsum \Z^n = \Z^{m+n}$ modulo the sublattice $\Z^{m+n} \cap M\Z^{m+n}$.

To account for the carry from the first component to the second, we first set up a modified digit function $\widetilde{\bd}_{\mathrm{r},B}: \Z^{m+n} \to \bdd_B$ and an associated dynamical system $\widetilde{\Phi}_{\mathrm{r},B}: \Z^{m+n} \mapsto \Z^n$ for the division with remainder $\pmod{\Z^n \cap B\Z^n}$ on the second component of $\bz = \bx \oplus \by$ by
\[
\widetilde{\bd}_{\mathrm{r},B}(\bz) := \bd_{\mathrm{r},B}\left(\by-C\cdot \Phi_{\mathrm{r},A}(\bx)\right), \qquad
\widetilde{\Phi}_{\mathrm{r},B}(\bz) := B^{-1}\left(\by-C\cdot \Phi_{\mathrm{r},A}(\bx) - \widetilde{\bd}_{\mathrm{r},B}(\bz)\right).
\]
The function $\widetilde{\bd}_{\mathrm{r},B}(\bz)$ is well-defined, since $C$ is integral and its dimensions $n \times m$ match with the dimensions $m \times 1$ of $\Phi_{\mathrm{r},A}(\bx)$. The function $\widetilde{\Phi}_{\mathrm{r},B}(\bz)$ is also well-defined, since $\by - C \cdot \Phi_{\mathrm{r},A}(\bx) - \widetilde{\bd}_{\mathrm{r},B}(\bz)\equiv \bo\pmod{\Z^n \cap B\Z^n}$, so the multiplication by $B^{-1}$ produces a vector in $\Z^n$.

We are now in the position to define \emph{the twisted restricted division with remainder by $M$}. Let \emph{the twisted digit function} $\bd_{\mathrm{r},M} :\Z^{m+n} \to \bdd_A \dsum \bdd_B$ be given by
\begin{equation}\label{eq:def_dtwist}
 \bd_{\mathrm{r},M}(\bz) := \left((\bd_{\mathrm{r},A} \circ \pi_A)) \oplus \widetilde{\bd}_{\mathrm{r},B}\right)(\bz) =
\begin{pmatrix}
\bd_{\mathrm{r},A}(\bx)\\
\widetilde{\bd}_{\mathrm{r},B}(\bz)
\end{pmatrix}=
\begin{pmatrix}
\bd_{\mathrm{r},A}(\bx)\\
\bd_{\mathrm{r},B}\left(\by - C\cdot \Phi_{\mathrm{r},A}(\bx)\right)
\end{pmatrix},
\end{equation}
where $\pi_A: \Z^{m+n} \mapsto \Z^m$ is simply a projection of the first component $\pi_A\vectwo{\bx}{\by} = \bx$. It will be abbreviated as $\bd_{r, M} = \twist{\bd_{r, A}}{\bd_{r,B}}{C}$.

Then we define the associated dynamical system in the usual way by setting
\[
\Phi_{\mathrm{r},M}: \Z^{m+n} \to \Z^{m+n}, \qquad
\Phi_{\mathrm{r},M}(\bz) := M^{-1}(\bz - \bd_{\mathrm{r},M}(\bz)).
\]
This construction will be abbreviated as $\Phi_{\mathrm{r},M}:=\twist{\Phi_{r, A}}{\Phi_{\mathrm{r}, B}}{C}$. 
One verifies easily that
\[
\Phi_{\mathrm{r},M}(\bz) =
\begin{pmatrix}
A^{-1}          & O\\
-B^{-1}CA^{-1}  & B^{-1}
\end{pmatrix}
\vectwo{\bx-\bd_{\mathrm{r},A}(\bx)}{\by-\widetilde{\bd}_{\mathrm{r},B}(\bz)}=
\]
\[
= \begin{pmatrix}
A^{-1}(\bx - \bd_{\mathrm{r},A}(\bx))\\
B^{-1}\left(-CA^{-1}(\bx - \bd_{\mathrm{r},A}(\bx))+\by-\widetilde{\bd}_{\mathrm{r},B}(\bz)\right)
\end{pmatrix} =
\]
\[
=\begin{pmatrix}
\Phi_{\mathrm{r},A}(\bx)\\
\widetilde{\Phi}_{\mathrm{r},B}(\bz)
\end{pmatrix} =
\left((\Phi_{\mathrm{r},A} \circ \pi_A) \oplus \widetilde{\Phi}_{\mathrm{r},B}\right)(\bz).
\]
Thus, $\Phi_{\mathrm{r},M}$ is defined correctly. Since $\bz = \bd_{\mathrm{r},M}(\bz)+M\Phi_{\mathrm{r},M}(\bz)$, we obtain that $\bz \equiv \bd_{\mathrm{r},M}(\bz) \pmod{\Z^{m+n} \cap M\Z^{m+n}}$. This last congruence implies that the mapping $\bz \mapsto \bd_{\mathrm{r},M}(\bz) \pmod{\Z^{m+n}/(\Z^{m+n} \cap M\Z^{m+n})}$ is a homomorphism from $\Z^{m+n}$ to $\Z^{m+n}/(\Z^{m+n} \cap M\Z^{m+n})$. In particular, $\bdd_{A} \dsum \bdd_{B}$ must contain all the representatives of $\Z^{m+n} / (\Z^{m+n} \cap M\Z^{m+n})$. By \eqref{eq:def_dtwist}, $\bd_{\mathrm{r},M}(\bz)$ attain all possible values in $\bdd_A \dsum \bdd_B$ as $\bx$ and $\by$ run through $\Z^m/(\Z^m \cap A\Z^m)$ and $\Z^n/(\Z^m \cap B\Z^m)$, respectively.

\begin{lemma}\label{lemTwist}
If $\bo_m \in \bdd_A$, $\bo_n \in \bdd_B$ and attractors of $\Phi_{\mathrm{r},A}$ and $\Phi_{\mathrm{r},B}$ consist only of the zero vectors in $\Z^m$ and $\Z^n$, respectively, then $\Phi_{\mathrm{r},M} = \twist{\Phi_{\mathrm{r},A}}{\Phi_{\mathrm{r},B}}{C}$ has attractor $\{\bo_{m+n}\}$ in $\Z^{m+n}$. In this case, the digit system $(M, \bdd_A \dsum \bdd_B)$ in $\Z^{m+n}[M]$, where $M=\twist{A}{B}{C}$, has the finiteness property.
\end{lemma}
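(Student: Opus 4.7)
The plan is to reduce the statement to Lemma \ref{lem:attrExt}. Indeed, once we establish that the attractor of $\Phi_{\mathrm{r},M}$ in $\Z^{m+n}$ equals $\{\bo_{m+n}\}$, the finiteness property of $(M, \bdd_A \dsum \bdd_B)$ in $\Z^{m+n}[M]$ follows immediately from that lemma: the discussion preceding the lemma already shows that $\bdd_A \dsum \bdd_B$ contains a complete set of coset representatives of $\Z^{m+n}/(\Z^{m+n} \cap M\Z^{m+n})$, and since $\bo_m \in \bdd_A$ and $\bo_n \in \bdd_B$ we have $\bo_{m+n} = \bo_m \dsum \bo_n \in \bdd_A \dsum \bdd_B$, so no enlargement of the digit set by the attractor is required. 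Everything therefore hinges on the attractor computation.

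The computational backbone is the block identity
\[
\Phi_{\mathrm{r},M}(\bx \dsum \by) = \Phi_{\mathrm{r},A}(\bx) \dsum \widetilde{\Phi}_{\mathrm{r},B}(\bx \dsum \by)
\]
derived in the paragraph immediately preceding the lemma. Two preliminary fixed-point facts are needed: since $\{\bo_m\}$ is the (forward-invariant) attractor of $\Phi_{\mathrm{r},A}$, one has $\Phi_{\mathrm{r},A}(\bo_m) = \bo_m$, and therefore $\bd_{\mathrm{r},A}(\bo_m) = \bo_m$; analogously $\bd_{\mathrm{r},B}(\bo_n) = \bo_n$ and $\Phi_{\mathrm{r},B}(\bo_n) = \bo_n$. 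I would then pick an arbitrary $\bz = \bx \dsum \by \in \Z^{m+n}$ and follow its orbit $\bz_k := \Phi_{\mathrm{r},M}^{k}(\bz) = \bx_k \dsum \by_k$. The block identity yields $\bx_k = \Phi_{\mathrm{r},A}^{k}(\bx)$, so by hypothesis there is $N$ with $\bx_k = \bo_m$ for every $k \ge N$. For such $k$ the carry $C \cdot \Phi_{\mathrm{r},A}(\bx_k) = C\bo_m = \bo_n$ vanishes and $\widetilde{\Phi}_{\mathrm{r},B}(\bz_k) = \Phi_{\mathrm{r},B}(\by_k)$, so $(\by_k)_{k \ge N}$ is a genuine $\Phi_{\mathrm{r},B}$-orbit starting at $\by_N \in \Z^n$; by hypothesis it eventually enters $\{\bo_n\}$ and stays there. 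Combined with $\Phi_{\mathrm{r},M}(\bo_{m+n}) = \bo_{m+n}$, this gives $\aa_{\Phi_{\mathrm{r},M}} = \{\bo_{m+n}\}$, after which Lemma \ref{lem:attrExt} closes the argument.

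The only delicate point is ensuring that once the first component reaches $\bo_m$ the second component experiences no further interaction with the twist $C$. This requires both $\bd_{\mathrm{r},A}(\bo_m) = \bo_m$ (so that $\Phi_{\mathrm{r},A}(\bo_m) = \bo_m$ and the carry vanishes exactly, not merely modulo $\Z^n \cap B\Z^n$) and $\bd_{\mathrm{r},B}(\bo_n) = \bo_n$ (so that $\bo_n$ is an absorbing fixed point of $\Phi_{\mathrm{r},B}$). Both are automatic from the attractor hypotheses, but they must be invoked explicitly to secure the decoupling of the two block components, and they are precisely the reason the hypotheses $\bo_m \in \bdd_A$ and $\bo_n \in \bdd_B$ appear in the statement.
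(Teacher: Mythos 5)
Your argument is correct and follows essentially the same route as the paper's own proof: iterate the block identity so that the first component, evolving under $\Phi_{\mathrm{r},A}$, becomes $\bo_m$ after finitely many steps, observe that the carry then vanishes so $\Phi_{\mathrm{r},M}$ reduces to $\Phi_{\mathrm{r},B}$ on the second component, which in turn reaches $\bo_n$, and finally invoke Lemma~\ref{lem:attrExt} (via $\Psi_M$) for the finiteness property in $\Z^{m+n}[M]$. Your explicit verification of the fixed-point facts $\bd_{\mathrm{r},A}(\bo_m)=\bo_m$ and $\Phi_{\mathrm{r},A}(\bo_m)=\bo_m$ simply spells out the decoupling step that the paper states more tersely.
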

\begin{proof}[Proof of Lemma \ref{lemTwist}]
Notice that
\[
\Phi_{\mathrm{r},M}^k(\bz) =
\vectwo{\Phi_{\mathrm{r},A}^k(\bx)}{\widetilde{\Phi}_{\mathrm{r},B}(\Phi_{\mathrm{r},M}^{k-1}(\bz))} =
\vectwo{\bo_m}{\widetilde{\Phi}_{\mathrm{r},B}(\Phi_{\mathrm{r},M}^{k-1}(\bz))}
\]
for $k \geq k_0 = k_0(\bx)$. When the first component $\bx$ of $\bz$ is $\bo_m$, then  $\Phi_{\mathrm{r},M}$ reduces to $\Phi_{\mathrm{r},B}$, {\it i.e.}, $d_{\mathrm{r},M}(\bz) = \bd_{\mathrm{r},B}(\by)$ and $\widetilde{\Phi}_{\mathrm{r},B}(\bz) = \Phi_{\mathrm{r},B}(\by)$. Thus,
\[
\Phi_{\mathrm{r},M}^k(\bz) = \vectwo{\bo_m}{\Phi_{\mathrm{r},B}^{k-k_0}(\pi_B \circ \Phi_{\mathrm{r},M}^{k_0}(\bz))}=
\vectwo{\bo_m}{\bo_n}, \text{ for } k \geq k_0 + k_1, k_1 = k_1(\bz).
\]
Thus, $\Phi_{\mathrm{r}, M}$ is ultimately zero in $\Z^{m+n}$. To show that $(M, \bdd_A \dsum \bdd_B)$ has the finiteness property in $\Z^{n+m}[M]$, apply Lemma~\ref{lem:attrExt} with $\Psi_M$ (which is defined in terms of $\Phi_{\mathrm{r},M}$).
\end{proof}

\subsection{Hypercompanion bases} Equipped with the twisted sums of digit systems we can establish the finiteness result for generalized Jordan blocks matrices. Recall that \emph{the companion matrix} $C(f) \in \Q^{d \times d}$ of a monic polynomial
\[
f(x) = x^d + a_{d-1}x^{d-1}+\dots + a_1x +a_0 \in \Q[x]
\] of degree $d := \deg f \geq 1$ is defined by
\begin{equation*}%\label{eq:compmatrix}
C(f) = \begin{pmatrix}
0 & 0 & \dots & 0& 0 & -a_0\\
1 & 0 & \dots & 0& 0 & -a_1\\
0 & 1 & \dots & 0& 0 & -a_2\\
\vdots & \vdots & \ddots &\vdots & \vdots & \vdots\\
0 & 0 & \dots & 1& 0 & -a_{d-2}\\
0 & 0 & \dots & 0& 1 & -a_{d-1}\\
\end{pmatrix}.
\end{equation*}
Following the notation in \cite{Per}*{Section~8.9, pp.~161--163}, we define the  \emph{the hypercompanion matrix} $H_k := H_k(f)$ of $f(x)$ of order $k$. For $k=1$, it is simply equal to $H_1:=C(f)$, while, for $k \geq 2$, it is defined by $k \times k$ block structure
\begin{equation}\label{eq:hypercomp}
H_k(f) :=\begin{pmatrix}
C(f)    & O       & \cdots &    O  & O\\
N       &    C(f) & \cdots &    O  & O\\
O       &       N & \ddots &    O  & O\\
\vdots  & \vdots  & \ddots & C(f)  & O\\
O       &       O & \cdots &    N  & C(f)\\
\end{pmatrix}.
\end{equation}
Here, $N \in \Z^{d \times d}$ has $1$ in its top--right corner and all other entries $0$. Clearly, $H_k \in \Q^{kd \times kd}$. In our paper, we use the transposed form of \cite{Per}, due to our choice of matrix-vector multiplication. An alternative form of hypercompanion matrices, where $N^t$ would appear above the principal diagonal is used in Jacobson~\cite{Jac}*{p.~72}. In the literature, the hypercompanion matrices also appear under the names of \emph{the (primary) rational canonical form} \cite{Har}, or  \emph{the generalized Jordan block} \cite{BJN}: in the special case $d=1$, $H_k$ is simply the classical Jordan block with a rational eigenvalue of multiplicity $k$.

\begin{lemma}\label{lemHypercomp}
Let $f \in \Q[x]$ of degree $d$ be monic and irreducible over $\Q$, with the associated hypercompanion matrix $H_k=H_k(f) \in \Q^{n \times n}$ of order $k$ and dimension $dk$. If all the zeros of $f(x)$ in $\C$ are of absolute value $ \geq 1$, then there exists a digit set $\bdd_{H_k}$, containing the zero vector, a restricted digit function $\bd_{\mathrm{r},H_k}: \Z^{dk} \to \bdd_{H_k}$ with associated dynamical system $\Phi_{\mathrm{r},H_k}:\Z^{dk} \to \Z^{dk}$, such that $\Phi_{\mathrm{r},H_k}$ has attractor $\{\bo_{dk}\}$.
\end{lemma}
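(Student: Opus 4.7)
The plan is to induct on the order $k$ of the hypercompanion matrix, using $H_1(f)=C(f)$ as the base case and the twisted-sum machinery of Section~\ref{subsec:twistsum} for the inductive step.

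For the base case, the crucial preliminary step is a root dichotomy: under irreducibility of $f\in\Q[x]$ together with $|\lambda|\geq 1$ for every root, either every root of $f$ satisfies $|\lambda|>1$ or every root satisfies $|\lambda|=1$. Indeed, if any root $\alpha$ lies on the unit circle then $\bar\alpha=1/\alpha$ is also a root of $f$ ($\Q$-conjugation being Galois), which forces the reciprocal polynomial $x^{\deg f}f(1/x)$ to share a root with the irreducible $f$ and hence to equal a nonzero rational multiple of it; the roots of $f$ then come in reciprocal pairs $\alpha,1/\alpha$, and the standing hypothesis $|\lambda|\geq 1$ pins each pair to the unit circle. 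When every root has modulus strictly larger than $1$, $C(f)$ is expanding and Corollary~\ref{col:RestrExt}(a) delivers a digit set in $\Z^d$ whose restricted dynamical system $\Phi_{\mathrm{r}}$ has attractor $\{\bo_d\}$. When every root has modulus $1$, irreducibility in characteristic zero yields simple eigenvalues and hence diagonalizability of $C(f)$, so $C(f)$ is a generalized rotation in the sense of Definition~\ref{defRot}; Corollary~\ref{corSmallD} combined with Remark~\ref{rem:rotRestr} furnishes a digit set with finite restricted $\Phi_{\mathrm{r}}$-attractor, and we adjoin this attractor to the digit set and choose the digit function to fix attractor elements (mimicking Proposition~\ref{prop:finAttr}), thereby shrinking the attractor to $\{\bo_d\}$. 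In both subcases we also include $\bo_d$ in the digit set if it is not already there.

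For the inductive step, the block structure in~\eqref{eq:hypercomp} yields the decomposition
\[
H_k(f)\;=\;\twist{C(f)}{H_{k-1}(f)}{C_\ast},\qquad
C_\ast=\begin{pmatrix}N\\O\\\vdots\\O\end{pmatrix}\in\Z^{(k-1)d\times d},
\]
which is a twisted sum with integer twisting matrix $C_\ast$. By the base case and the inductive hypothesis, both summands come equipped with zero-containing digit sets whose restricted dynamical systems have attractor equal to the zero vector. A direct application of Lemma~\ref{lemTwist} with $A=C(f)$, $B=H_{k-1}(f)$ and $C=C_\ast$ then produces the desired digit set $\bdd_{H_k}:=\bdd_{C(f)}\dsum\bdd_{H_{k-1}}$ in $\Z^{dk}$, containing $\bo_{dk}$ and such that $\Phi_{\mathrm{r},H_k}$ has attractor $\{\bo_{dk}\}$.

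The main obstacle I expect is the root dichotomy at the base step: once it is established, each subcase reduces to invoking a previously proved result (Proposition~\ref{prop:finExp}/Corollary~\ref{col:RestrExt} for the expanding subcase; Theorem~\ref{thmPeriodic}/Corollary~\ref{corSmallD} together with Remark~\ref{rem:rotRestr} for the generalized rotation subcase), and the inductive step becomes a mechanical application of Lemma~\ref{lemTwist}. Only minor bookkeeping remains: ensuring $\bo$ lies in every digit set (a harmless enlargement) and reading off from~\eqref{eq:hypercomp} that the twisting block $C_\ast$ is integer-valued, as required by the twisted-sum construction.
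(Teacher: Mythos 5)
Your proof is correct and follows essentially the same route as the paper: the same base-case dichotomy for the companion matrix (all roots of modulus $>1$, handled via the expanding-case results, versus all of modulus $1$, handled as a generalized rotation with the attractor adjoined to the digit set and the digit function fixing attractor points), followed by induction on the order via the twisted-sum Lemma~\ref{lemTwist}. The only cosmetic differences are that you peel off the first diagonal block, writing $H_k(f)=\twist{C(f)}{H_{k-1}(f)}{C_\ast}$, whereas the paper peels off the last one, $H_{k+1}=\twist{H_k}{H_1}{G}$, and that you supply an explicit argument for the root dichotomy that the paper merely asserts.
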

\begin{proof}[Proof of Lemma \ref{lemHypercomp}]
Consider the case $k=1$. Then $H=H_1(f)$ is the companion matrix of $f$. Since $f$ is irreducible over $\Q$, there are two possibilities: either $f$ has all roots of absolute value $>1$, or all roots of absolute value $=1$. In the first case, $H^{-1}$ is a contraction, and we can take the digit set $\bdd_{H}'$ which is a complete set of coset representatives of the residue class ring $\Z^d/(\Z^d \cap H\Z^d)$ which leads to a finite attractor of $\Phi_{\mathrm{r}, H_1}$. In the second case, $H$ is a generalized rotation, so according to the Remark \ref{rem:rs}, there exists a digit set $\bdd_{H}'$ that contains all coset representatives of $\Z^d/(\Z^d \cap H\Z^d)$ and that has a good enclosure. In both cases, the division mapping $\Phi_{\mathrm{r},H}(\bx) = H^{-1}(\bx - \bd_{\mathrm{r},H}(\bx))$, where  $\bd_{\mathrm{r},H}: \Z^d \to \bdd_{H}'$ satisfies $\bd_{\mathrm{r},H}(\bx) \equiv \bx \pmod{\Z^d \cap H\Z^d}$ is ultimately periodic by Remark~\ref{col:RestrExt} and Remark~\ref{rem:rs}. Now take $\bdd_{H}$ to be the union of $\bdd_{H}'$, the zero digit $\bo_d$ and the points from the attractor of $\Phi_{\mathrm{r}, H}$ in $\Z^d$, and modify the digit function $\bd_{\mathrm{r},H}$ so that it $\bd_{\mathrm{r},H}(\bx)=\bx$ for $\bx=\bo_d$ or $\bx$ in the attractor. Clearly, $\Phi_{\mathrm{r},H}$ has attractor $\{\bo_d\}$. Hence, $(H, \bdd_{H})$ now has  Property (F) in $\Z^d[H]$.

Assume that the lemma is true for all hypercompanion matrices of order $\leq k$, and that we already have the digit sets $\bdd_{H_k}$ and functions $\bd_{\mathrm{r},H_k}$, $\Phi_{\mathrm{r},H_k}$ with the required properties. Notice that, by denoting the ${d\times d(k+1)}$ block matrix $G= (O\;\dots\;O\;N)$ with integer entries, one can write $H_{k+1}$ is a twisted sum $H_{k+1} = \twist{H_k}{H_1}{G}$. By Lemma~\ref{lemTwist}, the dynamical system $\Phi_{\mathrm{r},H_{k+1}} := \twist{\Phi_{\mathrm{r},H_k}}{\Phi_{\mathrm{r},H_1}}{G}$ with the digit set $\bdd_{H_k} \dsum \bdd_{H_1}$ and the digit function $\bd_{\mathrm{r},H_{k+1}} := \twist{\bd_{\mathrm{r},H_k}}{\bd_{\mathrm{r},H_1}}{G}$ has attractor $\{\bo_{d(k+1)}\}$ and, hence, $(H_{k+1}, \bdd_{H_k} \dsum \bdd_{H_1})$ has the finiteness property in $\Z^{d(k+1)}[H_{k+1}]$.
\end{proof}

\section{Applications to the general theory}
\label{sec:general}

In this section we use our theory in order to give new proofs of some general characterization results of Property (F) established in \cite{AkThZa} and \cite{JanThu}. The advantage of these proofs is the fact that they are more algorithmic and allow good control on the size of the digit sets.

In \cite{JanThu}, we have proved the following result.

\begin{theorem}[{Jankauskas and Thuswaldner~\cite{JanThu}}]\label{thm:Main}
Let $A$ be an $d \times d$ matrix with rational entries. There is a digit set $\dd \subset \Z^d[A]$ that makes $(A, \dd)$ a digit system in $\Z^d[A]$ with finiteness property if and only if $A$ has no eigenvalue $\la$ with $\abs{\la} < 1$. The digit set $\bdd$ can even be chosen to be a subset of $\Z^d$.
\end{theorem}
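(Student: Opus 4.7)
The plan is to prove the two directions separately. For necessity, suppose $A$ has an eigenvalue $\la$ with $\abs{\la}<1$, and let $V\subseteq\R^d$ be the real $A$-invariant (generalized) eigenspace of $\la$ (together with $\bar\la$ if $\la\notin\R$), with $\pi\colon\R^d\to V$ the projection along an $A$-invariant complementary subspace. Since $A|_V$ has spectral radius strictly less than one, there exist a norm $\norm{\cdot}_V$ on $V$ and $\rho<1$ with $\norm{A|_V\bv}_V\le\rho\norm{\bv}_V$. For any finite $\bdd\subset\Z^d[A]$ and any $\bx=\sum_{j=0}^{\ell-1}A^j\bd_j\in\bdd[A]$, one then obtains $\norm{\pi(\bx)}_V\le \tfrac{1}{1-\rho}\max_{\bd\in\bdd}\norm{\pi(\bd)}_V$, a bound independent of $\ell$. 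But $\pi(\Z^d)$ is a subgroup of $V$ of positive $\Z$-rank and therefore unbounded, which forces $\bdd[A]\subsetneq\Z^d\subseteq\Z^d[A]$.

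For sufficiency, assume every eigenvalue of $A$ has modulus at least one. I would first invoke the primary rational canonical form to obtain $P\in GL_d(\Q)$ and a block-diagonal matrix $J=H_1\dsum\cdots\dsum H_r$, where each $H_i$ is a hypercompanion matrix associated to an irreducible factor $f_i$ of the minimal polynomial of $A$, with $A=P^{-1}JP$. The spectral assumption guarantees that every root of every $f_i$ has modulus at least one, so Lemma \ref{lemHypercomp} produces digit sets $\bdd_{H_i}\subset\Z^{d_i}$ containing $\bo_{d_i}$ whose restricted dynamical systems have attractor $\{\bo_{d_i}\}$. Iterating Lemma \ref{lemTwist} with trivial twist $C=0$ assembles these blockwise into $\bdd_J:=\bdd_{H_1}\dsum\cdots\dsum\bdd_{H_r}\subset\Z^d$ for which $(J,\bdd_J)$ has property (F) on $\Z^d[J]$, with $\Phi_{\mathrm{r},J}$ on $\Z^d$ attracting to $\{\bo_d\}$.

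It then remains to transport this construction back to $A$ while staying inside $\Z^d$. The plan is to appeal to Lemma \ref{lem:attrExt}: by Lemma \ref{lem:Res} one picks a finite $\bdd_0\subset\Z^d$ giving a complete set of coset representatives of $\Z^d/(\Z^d\cap A\Z^d)$, and studies the associated restricted remainder map $\Phi_{\mathrm{r}}$ on $\Z^d$. Left-multiplication by $P$ intertwines $\Phi_{\mathrm{r}}$ with a dynamical system $\tilde\Phi$ on the full-rank rational lattice $P\Z^d$, having base $J$ and digit set $P\bdd_0$; the coset structure is preserved since $P\Z^d\cap JP\Z^d = P(\Z^d\cap A\Z^d)$ (using $PA=JP$). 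Running the blockwise finite-attractor arguments of Proposition \ref{prop:finExp} on the expanding blocks (those $f_i$ with every root of modulus $>1$) and Theorem \ref{thmPeriodic} on the generalized-rotation blocks (those $f_i$ with every root of modulus exactly $1$), patched together by the twisted-sum machinery of Section \ref{sec:twist}, will produce a finite attractor for $\tilde\Phi$; pulling back by $P^{-1}$ yields a finite $\aa_\Phi\subset\Z^d$ for $\Phi_{\mathrm{r}}$. Setting $\bdd:=\bdd_0\cup\aa_\Phi\subset\Z^d$ and invoking Proposition \ref{prop:finAttr} completes the proof.

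The main obstacle is the transport step. Our building blocks from Sections \ref{secArithm}--\ref{sec:twist} are stated for integer lattices, whereas the rational similarity $A=P^{-1}JP$ forces us to work on the non-standard lattice $P\Z^d$, which in general does not respect the blockwise decomposition of $J$. The key technical content will be to verify that the quantitative ingredients underlying the finite-attractor arguments (boundedness in an invariant norm, discreteness of the ambient lattice, and the control of residue classes via the auxiliary lattice $\Z^d\cap A\Z^d$ and the Smith Normal Form of Section \ref{sec:theorySNF}) survive this change of lattice and still combine into a finite attractor for $\Phi_{\mathrm{r}}$ on $\Z^d$.
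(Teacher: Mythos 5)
Your necessity argument is fine (it is a spectral-projection variant of the paper's left-eigenvector argument, and both give the same boundedness-versus-unboundedness contradiction), and your blockwise construction for the hypercompanion form $J$ via Lemma \ref{lemHypercomp} and Lemma \ref{lemTwist} with trivial twist matches the paper. The genuine gap is exactly the step you flag as ``to be verified'': transporting the finiteness property from $J$ back to $A$. Your plan is to conjugate the dynamical system, i.e.\ to start from an arbitrary complete residue system $\bdd_0$ of $\Z^d/(\Z^d\cap A\Z^d)$ and re-run the attractor machinery for $\tilde\Phi$ on the lattice $P\Z^d$ with base $J$. This does not go through as stated, for two reasons. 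First, the quantitative ingredients are genuinely tied to the standard lattices: Lemma \ref{lemHypercomp}, Lemma \ref{lemTwist} and Theorem \ref{thmPeriodic} live on $\Z^{d_1}\dsum\cdots\dsum\Z^{d_r}$, and $P\Z^d$ in general admits no splitting compatible with the block structure of $J$, so ``running the blockwise finite-attractor arguments'' on $P\Z^d$ is not a routine verification. Second, and more concretely, for blocks with unimodular eigenvalues a bare complete residue system is not enough: Theorem \ref{thmPeriodic} requires the digit set to have a good enclosure in each residue class, and your $\bdd_0$ (or its image $P\bdd_0$) carries no such geometric condition, so there is no reason the attractor of $\Phi_{\mathrm{r}}$ (equivalently of $\tilde\Phi$) is finite. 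In short, the crux of the sufficiency direction is left unproved.

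The paper avoids this obstacle entirely by transporting the \emph{result} rather than the dynamics. Since $(tT)A(tT)^{-1}=TAT^{-1}$, the conjugating matrix may be rescaled to be \emph{integral}, $T\in\Z^{d\times d}$ with $A=TBT^{-1}$. Having established $\Z^d[B]=\bdd[B]$ for the block-diagonal $B$, one simply multiplies this set identity by $T$ to get $(T\Z^d)[A]=(T\bdd)[A]=:\bdd'[A]$. Because $T$ is integral, $T\Z^d$ is a finite-index sublattice of $\Z^d$, so $\Z^d=T\Z^d+\rr$ with $\rr$ a finite set of coset representatives (containing $\bo_d$), and hence
\[
\Z^d[A]=(T\Z^d+\rr)[A]=(T\Z^d)[A]+\rr[A]=\bdd'[A]+\rr[A]\subseteq(\bdd'+\rr)[A]\subseteq\Z^d[A],
\]
so $\bdd'':=\bdd'+\rr$ (a finite sumset, using that $\bo_d$ lies in the digit sets to pad expansions of different lengths) works. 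No attractor analysis on $P\Z^d$ and no residue-class bookkeeping for the non-standard lattice is needed; this is the missing idea you should add to close your transport step.
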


The proof Theorem \ref{thm:Main} in \cite{JanThu} essentially builds on the following theorem.

\begin{theorem}[{Akiyama {\it et al.}~\cite{AkThZa}}]\label{thm:ATZ}
Let $\al \in \C$. Then, there is a finite subset $F$ of $\Z$, such that $\Z[\al] =F[\al]$, if and only if $\al$ is an algebraic number whose conjugates, over $\Q$, are all of modulus one, or all of modulus greater than one.
\end{theorem}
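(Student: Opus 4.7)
The plan is to translate the statement into the matrix-theoretic framework developed in Sections~\ref{secArithm}--\ref{sec:twist} of this paper. Let $f(x)\in\Q[x]$ denote the (monic) minimal polynomial of $\alpha$ of degree $d$, and let $C(f)\in\Q^{d\times d}$ be its companion matrix, whose spectrum coincides with the set of conjugates of $\alpha$. Via the $\Q$-basis $1,\alpha,\ldots,\alpha^{d-1}$ of $K=\Q(\alpha)$, multiplication by $\alpha$ corresponds to left multiplication by $C(f)$ on $\Q^d$, and $\Z[\alpha]$ is identified with $\Z^d[C(f)]$. Under this identification a scalar digit set $F\subset\Z$ corresponds to the one-dimensional vector digit set $F\boe_1\subset\Z^d$.

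For sufficiency, assume the conjugates of $\alpha$ have uniform modulus. If all $|\alpha_j|>1$, then $C(f)$ is expanding, so Proposition~\ref{prop:finExp} combined with Corollary~\ref{col:RestrExt} supplies a finite $D\subset\Z^d$ for which $(C(f),D)$ has Property~(F). If all $|\alpha_j|=1$, then $f$ has distinct roots (being irreducible over $\Q$), so $C(f)$ is diagonalizable over $\C$ with eigenvalues on the unit circle, hence a generalized rotation in the sense of Section~\ref{secGenRot}; Corollary~\ref{cor:finRot} then supplies such a $D$. To pass from vector to scalar digits I would identify each $\bd=(d_0,\ldots,d_{d-1})^T\in D$ with $\delta(\alpha)=\sum_j d_j\alpha^j\in\Z[\alpha]$, so that a vector expansion $\bx=\sum_{i=0}^{\ell-1}C(f)^i\bd_i$ translates into
\[
x \;=\; \sum_{i=0}^{\ell-1}\alpha^i\delta_i(\alpha) \;=\; \sum_k\Bigl(\sum_{i+j=k}d_{i,j}\Bigr)\alpha^k,
\]
whose $k$-th coefficient is an integer obtained by summing at most $d$ entries drawn from vectors of $D$. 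Taking $F:=\{m\in\Z:|m|\le d\cdot\max_{\bd\in D,\,j}|d_j|\}$ then gives a finite scalar digit set with $\Z[\alpha]=F[\alpha]$.

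For necessity, suppose a finite $F\subset\Z$ with $\Z[\alpha]=F[\alpha]$ exists. Transcendence of $\alpha$ is impossible since then $\Z[\alpha]\cong\Z[X]$ and each $n\in\Z\subset\Z[\alpha]$ would have a unique expansion forcing $n\in F$, contradicting finiteness of $F$. A conjugate $\alpha_j$ with $|\alpha_j|<1$ is excluded by applying the embedding $\sigma_j:\alpha\mapsto\alpha_j$: the geometric-series estimate $|\sigma_j(x)|\le\max|F|/(1-|\alpha_j|)$ for $x\in F[\alpha]$ contradicts the unboundedness of $|\sigma_j(n)|=|n|$ on $\Z$. The step I expect to be the main obstacle is ruling out the mixed configuration, where some $|\alpha_m|=1$ and some $|\alpha_k|>1$; my approach is a reciprocity argument. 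Since $f$ has real coefficients, $\bar\alpha_m=1/\alpha_m$ is a root of $f$; it is also a root of the monic polynomial $g(x):=x^d f(1/x)/f(0)\in\Q[x]$ (whose roots are precisely $1/\alpha_j$). Irreducibility of $f$ together with $\gcd(f,g)\ne 1$ then forces $f=g$, so $f$ is reciprocal and its roots occur in pairs $\beta,1/\beta$. Consequently $1/\alpha_k$ is a root of $f$ of modulus strictly less than one, contradicting the previous step and completing the proof.
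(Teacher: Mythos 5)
Your proof is correct and follows essentially the same route as the paper's: pass to the companion matrix $C(f)$, invoke the matrix finiteness results (Proposition~\ref{prop:finExp}/Corollary~\ref{col:RestrExt} in the expanding case, Corollary~\ref{cor:finRot} in the unimodular case) to get a finite vector digit set in $\Z^d$, and then regroup the integer coordinates of the digits along powers of $\alpha$ to obtain a finite sumset-type scalar set $F\subset\Z$, with the geometric-series boundedness argument for the necessity direction. The differences are minor: you replace the paper's left-eigenvector-plus-Galois passage from $\Z[A]\boe_1=F[A]\boe_1$ to $\Z[\al]=F[\al]$ by the direct identification $\Z[\al]\cong\Z^d[C(f)]$, and you make explicit two points the paper leaves implicit (excluding transcendental $\al$, and ruling out mixed moduli via the reciprocal-polynomial argument), both of which are sound.
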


In order to prove Theorem~\ref{thm:Main} in \cite{JanThu}, the main result of \cite{AkThZa}, Theorem~\ref{thm:ATZ}, was translated into the polynomial ring setting and formulated in terms of a finiteness property for the representations of polynomials in the quotient ring $\Z[x]/(f)$, where $f=f(x) \in \Z[x]$. Then, it was shown that this finiteness property can be transferred from these intermediate polynomial rings to companion matrices of powers of irreducible factors of $f(x)$. The last step was to ``assemble'' the finite representations together through the Frobenius normal form of $A$. While our approach in \cite{JanThu} resulted in a very concise proof of Theorem~\ref{thm:Main}, the introduction of intermediate algebraic structures make the practical computation of the radix expansions extremely complicated. First key moment -- the control on the orbit of $A^{-n}\bx$ in the directions of eigenvectors with $\abs{\la}=1$  -- is achieved through the embedding into a more complicated representation space $\R^d \times \prod_{\mathfrak{p}}\Q_{\mathfrak{p}}$ (an open subset of an ad\`ele ring) in the proof of Theorem~\ref{thm:ATZ} in \cite{AkThZa}. The second key moment -- control over the behavior of $A^{-n}\bx$ in the directions that correspond to Jordan blocks of order $m \geq 2$ -- is performed in \cite{JanThu} by convolving the finite representations in the intermediate rings $\Z[x]/(f^m)$. This obscures the dynamics of the radix expansion process. Another side effect is, that the introduction of the intermediate structures and the subsequent process of assembling the finite representations typically blows out the sizes of the digit sets. All factors combined, the proof of Theorem~\ref{thm:Main} in \cite{JanThu} does not translate in a straightforward way into a practical algorithm and makes the computation of radix expansions in $(A, \bdd)$ rather complicated. These drawbacks are no longer there in the proofs of these theorems we will now provide.

\begin{proof}[Proof of Theorem \ref{thm:Main}]
We first prove the sufficiency: if all the eigenvalues of $A$ are of absolute value $\geq 1$, there exists $(A, \bdd)$ with the finiteness property in $\Z^d[A]$.

By \cite{Per}*{Chapter~8, p.~162, Theorem~8.10} there exists an invertible matrix $T \in \Q^{d \times d}$, such that $B=T^{-1}AT$ takes a block diagonal form $B = \oplus_{j=1}^r H_j$, where each block $H_j = H_{k_j}(f_j) \in \Q^{d_j \times d_j}$, $1 \leq j \leq r$, $1 \leq d_j \leq d$, $d_1 + \dots + d_r = d$, is a hypercompanion matrix of order $k_j$ of a monic irreducible polynomial $f_j(x) \in \Q[x]$ that divides the characteristic polynomial $\Phi_{\mathrm{r}}(x)$ of $A$. 
Since for any $t \in \Z$, $(tT)A(tT)^{-1} = TAT^{-1}$, we can assume that $T \in \Z^{d \times d}$. First, we will construct the digit system with the finiteness property for the module $\Z^d[B]$.

By assumption, $A \in \Q^{d \times d}$ has no eigenvalues $\la \in \C$ of absolute value $\abs{\la} < 1$. By the irreducibly of $f_j$, all eigenvalues of $H_j$ are of absolute value $>1$, or all of them are of absolute value $=1$. We consider two possibilities: a) every $H_j$ either is expanding, or has order $k_j=1$; b) some $H_j$ of order $k_j\geq 2$ are not expanding, that is, have only unimodular eigenvalues.

\medskip

\noindent \emph{Case a)} according to Proposition \ref{prop:finExp} (for expanding matrices $H_j$) and by Corollary \ref{cor:finRot} (for generalized rotation matrices $H_j$), there exists digit sets $\bdd_j \subset \Z^{d_j}[H_j]$ (and even $\bdd_j \subset \Z^{d_j}$), such that each digit system $(H_j, \bdd_j)$ has the finiteness property in $\Z^{d_j}[H_j]$; each such digit system is equipped with the dynamical system $\Phi_j: \Z^{d_j}[H_j] \to \Z^{d_j}[H_j]$, each of which has attractor $\{\bo_{d_j}\}$ in $\Z^{d_j}[H_j]$. In order to compensate for the different lengths of radix expansions across all blocks,  we need $\bo_{d_j} \in \bdd_j$ (or simply adjoin the zero vectors to the digit sets). Then we take their cartesian product $\bdd := \dsum_{j=1}^r \bdd_j$. Since $B=\oplus_{j=1}^r{H_j}$, one readily verifies
$\Z^d[B] = \dsum_{j=1}^r{\Z^{d_j}}[H_j]$. By the finiteness property of $(H_j, \bdd_j)$, for each $j$, $1 \leq j \leq r$, $\Z^{d_j}[H_j] = \bdd_j[H_j]$. By collecting terms with equal powers of matrices $H_j^n$ in the same vectors and using padding with zero vectors in each factor, if necessary, when exchanging $\dsum$ and the $\sum$ in radix expansions (see \cite{JanThu}*{(5),(6)}), one obtains
\begin{equation}\label{eq:block}
\Z^d[B] = \dsum_{j=1}^r \bdd_j[H_j] = (\dsum_{j=1}^r \bdd_j)[\oplus_{j=1}^r H_j] = \bdd[B].
\end{equation}
 In other words, the  digit system $(\oplus_{j=1}^{r} H_j, \dsum_{j=1}^{r} \bdd_j) = (B, \bdd)$, and it has the finiteness property in $\Z^d[B]$. The dynamical system $\Phi = \oplus_{j=1}^r \Phi_j$ then performs the division with remainder by $B$ in $\Z^d[B]$.

\medskip

\noindent \emph{Case b)} is more subtle. By Corollary \ref{col:RestrExt} and Lemma \ref{lemHypercomp}, for each (expanding or non-expanding) $H_j$ of order $k_j \geq 2$ and dimension $d_j \times d_j$, there exists an integer digit set $\bdd_{j} \subset \Z^{d_j}$ containing $\bo_{d_j}$, and a restricted dynamical system $\Phi_{\mathbf{r},H_j}: \Z^{d_j} \to \Z^{d_j}$, that has attractor $\{\bo_{d_j}\}$ in $\Z^{d_j}$. Then, one constructs the extended division function $\Psi_j$ in $\Z^{d_j}[H_j]$, as in \eqref{efExtDiv}, that performs  division with remainder by $H_j$ in $\Z^{d_j}[H_j]$ with respect to $\bdd_{j}$. Then $\Psi_j$ has attractor $\{\bo_{d_j}\}$ in $\Z^{d_j}[H_j]$, and $\bdd_{j}[H_j]=\Z^{d_j}[H_j]$ by Lemma \ref{lem:attrExt}. From that point on, one takes the direct sum \eqref{eq:block} over every $\bdd_{j}[H_j]$ as in case (a), only using the extended division $\Psi_j$ in $\Z^{d_j}[H_j]$ in the place of $\Phi_j$, whenever $k_j \geq 2$ and $H_j$ is not expanding. 

Thus, in both cases (a) or (b), one arrives to the situation $\Z^d[B]=\bdd[B]$, for some finite digit set $\bdd \subset \Z^d[A]$, which can be also be selected from $\Z^d$, if necessary. The last step is the same as in the proof of \cite{JanThu}*{Theorem~2, p.~357}, after conjugating  with $T \in \Z^{d \times d}$, and using $TBT^{-1}=A$, one obtains $(T\Z^d)[A] = \bdd'[A]$, with $\bdd'=T\bdd$. As $T\Z^d \subset \Z^d$ is a sublattice, a set of residue class representatives $\rr$ of $\Z^d/T\Z^d$ is finite, and $\Z^d = TZ^d+\rr$. This gives
\[
\Z^d[A] = (T\Z^d + \rr)[A] = (T\Z^d)[A] + \rr[A] = \bdd'[A] + \rr[A] = \bdd''[A],
\] for the finite digit set $\bdd'' := \bdd' + \rr$.

For the converse part of the Theorem \ref{thm:Main}, we proceed as in \cite{Vin1}, \cite{Gi2}, or \cite{JanThu}. Assume that $(A, \bdd)$ has the finiteness property. Pick arbitrary $\bz = \bd_0 + A\bd_1 + \dots + A^k\bd_k \in \bdd[A]$ and let $\bv \in \C^d$ satisfy $\bv^tA=\la \bv^t$. If $\abs{\la} < 1$, then 
$\bv^t\bz = \bv^t\bd_0 +  \la\bv^t\bd_1 + \dots \la^k\bv^t\bd_k$,
As $\bdd$ is finite, pick $C > 0$, such that $\abs{\bv^t\bd} < C$, for every $\bd \in \bdd$. Then, \[
(\Re{\bv})^t\bz = \Re{(\bv^t\bz)} \leq \abs{\bv^t\bz} < C(1+\abs{\la}+ \dots + \abs{\la}^k) < C/(1-\abs{\la}).
\]
In particular, this means that all integer vectors $\Z^d \subset \bdd[A]$ lie in a half-plane, as $\Z^d[A]=\bdd[A]$, but that is impossible. Therefore, every eigenvalue $\la$ of $A$ must be $\abs{\la} \geq 1$.
\end{proof}

We now derive Theorem \ref{thm:ATZ} from Theorem \ref{thm:Main}.

\begin{proof}[Proof of Theorem \ref{thm:ATZ}]
Assume that no algebraic conjugate of $\al$ over $\Q$ is of absolute value $<1$.
Let $A=C(f) \in \Q^{d \times d}$  be the companion matrix of the minimal polynomial $f \in \Z[x]$ of $\al$. As the eigenvalues of $A$ are all $\geq 1$ in absolute value then, according to Theorem~\ref{thm:Main}, one can find a finite set $\bdd \subset \Z^d$, such that
\begin{equation}\label{eq:incl1}
\Z[A]\boe_1 \subseteq \Z^d[A] = \bdd[A],
\end{equation}
where $\Z[A]$ is the set of matrix polynomials in $A$ with integer coefficients and the standard basis $\boe_j \in \Z^d, 1 \leq j \leq d$ satisfy $A\boe_j=\boe_{j+1}$, for $1 \leq j \leq d-1$. Let $D \subset \Z^d$ denote the set of the integer coordinate vectors of the digits $\bd_i=\sum_{j=1}^dd_{ij}\boe_j \in \bdd$ w.r.t.\ the standard basis. Define the finite set $F :=\underbrace{D + D + \dots +D}_{d-\text{times}}$. Write
$\bd_i
%=\sum_{j=1}^dd_{ij}\boe_j
=\sum_{j=1}^dd_{ij}A^{j-1} \boe_1$.
For arbitrary $\bz \in \bdd[A]$,
\[
\bz = \sum_{j=0}^{k} A^i \bd_i = \sum_{i=0}^{k}A^i \left(\sum_{j=1}^d d_{ij} A^{j-1} \boe_1 \right) = \sum_{i=1}^{k+d} A^{i-1} \left(\sum_{j=1}^{\min\{d, i\}} d_{i-j,j} \boe_1 \right)= \sum_{j=1}^{k+d} f_i A^{i-1} \boe_1,
\]
where $f_i := \sum_{j=1}^{\min\{d, i\}} d_{i-j,j} \in F$. Hence,
$\bdd[A] \subset F[A]\boe_1$. In view of \eqref{eq:incl1},
$\Z[A]\boe_1 \subseteq F[A]\boe_1 \subseteq \Z[A]\boe_1$, since $F \subset \Z$. Therefore, $\Z[A]\boe_1 = F[A]\boe_1$. Now pick the right eigenvector $\bv \in \C^d$ of $A$, such that $\bv^t \boe_1 \ne 0$ (such $\bv$ exists, because the companion matrix $A$ of the minimal polynomial of $\al \ne 0$ is diagonalizable over $\C$ and invertible). Then $\bv^t A=\al'\bv$, where $\al'$ is algebraically conjugate to $\al$ over $\Q$. Since $\bv^t\Z[A]\boe_1 = \Z[\al']\bv^t\boe_1$,  $\bv^t F[A]\boe_1 = F[\al']\bv^t\boe_1$, $\bv^t\boe_1 \ne 0$, we obtain $\Z[\al']=F[\al']$. By applying the Galois automorphism $\al' \mapsto \al$, we find that $\Z[\al]=F[\al]$.

Conversely, assume that $\Z[\al]=F[\al]$ for some finite set $F \in \C$. Then, $F$ is a subset of $\Z[\al]$ itself. Suppose that $\al$ has an algebraic conjugate over $\Q$ of absolute value $<1$. We proceed in the same way as in the lattice digit system case. By taking Galois automorphism $\al \mapsto \al'$, we see that $\Z[\al']=F'[\al']$, for some finite set $F' \subset \Z[\al']$. Therefore, we can assume that $\al$ itself is of the absolute value $<1$. Then, an arbitrary element of $\be \in F[\al]$, $\be = f_0 + f_1 \al + \dots + f_n \al^n$, $f_j \in F$ is of absolute value at most
\[
\abs{\be} \leq C(1 + \abs{\al} + \dots + \abs{\al^n}) < C/(1-\abs{\al}), 
\] where $C = \max_{f \in F}\abs{f}$. This shows that $F[\al]$ is a bounded subset of $\C$, which is impossible, because it contains $\Z$, if $F[\al]=\Z[\al]$.
\end{proof}

\section{Maximal integral sublattice of a rational lattice}\label{sec:theorySNF}

For practical computations in Section~\ref{sec:num}, the auxiliary lattice of the form $\ll=\Z^d \cap A\Z^d$ (that is, `the integral part' of $A\Z^d$), where $A$ is an invertible rational matrix, and the group of its coset representatives $\Z^d/\ll)$ play very important roles. In particular, $\Z^d/\ll$ can be viewed as 'the approximation of the first order' to the group $A\Z^d[A]/\Z^d[A]$, hypothetically the smallest necessary subset of digits needed to represent elements of $\Z^d[A]$. Since it is desirable to control the size of a digit set we will need a formula for the basis matrix and the coset representatives of this auxiliary lattice $\ll$ in $\Z^d$.

\subsection{Review of lattices} We recall basic facts on lattices. An additive subgroup $\ll \subset \R^d$ is called \emph{a lattice}, if $\ll$ is a uniformly discrete and relatively dense subset of $\R^d$. In this case, $\ll$ is a free $\Z$- module of rank $d$ and there exists a set 
$\{\bv_1, \bv_2,\dots, \bv_d\}$ of $d$ linearly independent (over $\R$) vectors in $\R^d$  that generate $\ll$ over $\Z$, so that
$\ll = \Z \bv_1+\Z \bv_2 + \dots + \Z \bv_d$ (\cite{Neu}{Proposition~4.2 of Section~4}) . This can be also written as $\ll = L \Z^d$, where $L \in \R^{d \times d}$ denotes the basis matrix with columns $\bv_1$, $\dots$, $\bv_d$. In particular, if $L \in \Q^{d \times d}$ or $L \in \Z^{d \times d}$, then $\ll$ is called \emph{rational}, or \emph{integral} lattice, respectively. The matrix $L$ is not unique and depends on the choice of the basis for $\ll$. Two lattices $\ll = L \Z^d$ and $\mm = M \Z^d$, $L, M \in \R^d$, are nested $\ll \subset \mm$ if and only if $L$ is left divisible by $M$ in $\Z^{d \times d}$, {\it i.e.}, if  there exist an integer matrix $Q \in \Z^{d \times d}$, such that $L=MQ$. In particular, $\ll=\mm$ if and only if $Q$ is \emph{a unimodular} matrix, that is, $\det(Q) = \pm 1$; in that case $M=LQ^{-1}$. In particular, for any unimodular $Q \in \Z^{d\times d}$, one has $Q\Z^d=\Z^d$. %Two more classical facts are summarized in the next proposition:

%\begin{proposition}\label{prop:fact}
%Let $A\in\Z^{d\times d}$ be invertible, and let $\ll\subset\mm\subset\Z^d$ %be two lattices. Then  $\big[A \mm : A \ll \big] = [\mm:\ll]$. If $\mm=\Z^d$, %$\ll = L\Z^d$, where $L \in \Z^{d \times d}$, then $[\Z^d:\ll]=|\det(L)|$.
%\end{proposition}

%The first statement of Proposition \ref{prop:fact} simply reflects the fact %that $A$ is a isomorphism of $\Z$--modules, while the second fact is a %consequence of a well--known structure theorem for free modules over a %principal ideal ring \textcolor{red}{\cite{Lang_modules}}, or can be proved as %in Proposition \ref{prop:resGroup} below.

From the theory of the Smith Normal Form (SNF) \cites{New72, New97}, for an arbitrary invertible matrix $B \in\Z^{d\times d}$ there exist unimodular matrices $P, Q \in \Z^{d \times d}$, such that $B=PDQ^{-1}$, where $D=\mathrm{SNF}(B)$ is the unique diagonal matrix,
\begin{equation}\label{snf}
D = \begin{pmatrix}
n_1 & 0 & \dots & 0\\
0 & n_2 & \dots & 0\\
\vdots & \vdots & \ddots & \vdots\\
0 & 0 & \dots & n_d\\
\end{pmatrix},
\end{equation}
whose entries $n_1$, $\dots$, $n_d$, called the {\em elementary divisors}, are positive integers satisfying the divisibility properties $n_1 \mid n_2$, $ n_2\mid n_3$,\,$\dotsc$,\,$ n_{d-1} \mid n_d$. The matrices $P$, $Q$, and $D$ can be found by reducing the rows and columns of $B$ through the integer division with remainder (which corresponds to invertible base transformations in $\Z^d$ and $\ll=B\Z^d$). A few other facts on SNF that we will need further are summarized in Proposition \ref{prop:resGroup}.

\begin{proposition}\label{prop:resGroup} Let $\ll = L\Z^d$, $L \in \Z^{d \times d}$ be a lattice that has a {\rm SNF} factorization $L=PDQ^{-1}$. Then
$\Z^d = L\Z^d + \rr$, where the $\rr$ is the full set of coset representatives 
\begin{equation}\label{eq:resGroup}
\mathcal{S} := \left\{ \sum_{i=1}^d j_i P\boe_i, 0 \leq j_i \leq n_i-1\right\}.
\end{equation}
Here, $\boe_i$, $i=1,\dots, d$ is the standard basis of $\Z^d$ and $n_i$ are the elementary divisors from $D=\mathrm{SNF}(L)$. Consequently, $[\Z^d : \ll] = \abs{\rr}= n_1 \cdots n_d = \abs{\det(D)}=\abs{\det(L)}$.
\end{proposition}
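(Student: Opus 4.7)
The plan is to exploit the Smith Normal Form factorization $L=PDQ^{-1}$ in order to reduce the statement to the trivial diagonal case, using the fact that the unimodular matrices $P,Q\in\Z^{d\times d}$ preserve the lattice $\Z^d$. The first observation is that, since $Q$ is unimodular, $Q^{-1}\Z^d=\Z^d$, hence $L\Z^d=PDQ^{-1}\Z^d=PD\Z^d$. Thus $Q$ disappears from the quotient and only $P$ and $D$ remain relevant.

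Next I would treat the diagonal case directly. Because $D=\mathrm{diag}(n_1,\dots,n_d)$, the sublattice $D\Z^d$ equals $n_1\Z\boe_1\oplus\cdots\oplus n_d\Z\boe_d$, so $\Z^d/D\Z^d$ splits canonically as $\bigoplus_{i=1}^d\Z/n_i\Z$ by coordinate-wise division with remainder. Consequently the set $\bigl\{\sum_{i=1}^d j_i\boe_i:0\le j_i<n_i\bigr\}$ is a complete and non-redundant system of coset representatives of $\Z^d$ modulo $D\Z^d$, of cardinality $n_1\cdots n_d$.

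The third step is to transport this decomposition via the unimodular bijection $P:\Z^d\to\Z^d$, which carries $D\Z^d$ onto $PD\Z^d=L\Z^d$. Applying $P$ to both sides of the diagonal decomposition yields, for every $\by\in\Z^d$, an expression $\by=\sum_i j_iP\boe_i+\bv$ with $0\le j_i<n_i$ and $\bv\in L\Z^d$; this is exactly $\Z^d=\mathcal{S}+L\Z^d$. Uniqueness of the representative in $\mathcal{S}$ is inherited from uniqueness in the diagonal case: any relation $\sum(j_i-j_i')P\boe_i\in L\Z^d=PD\Z^d$ becomes $\sum(j_i-j_i')\boe_i\in D\Z^d$ after left-multiplication by $P^{-1}$, forcing $j_i=j_i'$ for every $i$. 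The index count $[\Z^d:L\Z^d]=|\mathcal{S}|=n_1\cdots n_d$ is then immediate, and the equality $|\det(L)|=|\det(D)|$ follows at once from $|\det(P)|=|\det(Q)|=1$.

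There is no substantial obstacle here: the proposition is essentially the structure theorem for finitely generated abelian groups restated in the language of lattices, with the SNF furnishing the explicit change of basis that exhibits the splitting. The only point worth tracking carefully is that one and the same matrix $P$ simultaneously carries $D\Z^d$ to $L\Z^d$ and the canonical representatives $\sum j_i\boe_i$ to the claimed representatives $\sum j_iP\boe_i$, which is automatic from $P\Z^d=\Z^d$.
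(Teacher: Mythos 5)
Your proposal is correct and follows essentially the same route as the paper: both reduce to the diagonal case $\Z^d/D\Z^d\simeq\bigoplus_i\Z/n_i\Z$ and then transport the canonical representatives and the sublattice via the unimodular matrix $P$ (with $Q$ eliminated because $Q^{-1}\Z^d=\Z^d$), concluding the index and determinant count from unimodularity. Your explicit uniqueness check via left-multiplication by $P^{-1}$ is slightly more detailed than the paper's remark that $P\bx\in L\Z^d$ iff $\bx\in D\Z^d$, but the argument is the same in substance.
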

\begin{proof}[Proof of Proposition \ref{prop:resGroup}]
As $D$ is diagonal, $\Z^d/D\Z^d \simeq \Z_{n_1} \oplus \Z_{n_2} \oplus \dots \oplus \Z_{n_d}$ with the set of representatives $\tt := \left\{ \sum_{i=1}^d j_i\boe_i, 0 \leq j \leq n_i-1 \right\}$. Hence, $\Z^d = D\Z^d + \tt$. By the unimodularity of $P$ and $Q$, $\Z^d = P\Z^d = PD\Z^d + P\tt=PDQ^{-1}\Z^d+P\tt$. Setting $\mathcal{S} := P\tt$ we obtain $\Z^d = PDQ^{-1}\Z^d + P\tt=L\Z^d+\mathcal{S}$. Notice that $P\bx \in \Z^d$ belongs to $L\Z^d = PDQ^{-1}\Z^d = PD\Z^d$ if and only if $\bx \in D\Z^d$, therefore $\Z^d/D\Z^d$ and $\Z^d/L\Z^d$ are isomorphic. The last statement follows.
\end{proof}

\subsection{A result on rational lattices}

We turn our attention to rational lattices $\ll = A\Z^d$, where $A \in \Q^{d \times d}$ is invertible. Let $q \in \N$ be a common denominator to the fractions that appear in the entries of $A$ (not necessary the smallest one). Then $A = q^{-1}B$, where $B \in \Z^{d \times d}$. We are interested in the largest integral sub-lattice $\Z^d \cap \ll = \Z^d \cap (q^{-1}B)\Z^d$.

\begin{theorem}\label{thmBasis}
Let $B\in \Z^{d \times d}$ have {\rm SNF} factorization $B=PDQ^{-1}$, with elementary divisors $n_1$, $\dots$, $n_d$, and let $q \in \N$. Then the matrices $L=PN$ and $L'=LQ^{-1}= PNQ^{-1}$, with
\[
N=\begin{pmatrix}
n_1/\gcd(q, n_1) & 0 & \dots & 0\\
0 & n_2/\gcd(q, n_2) & \dots & 0\\
\vdots & \vdots & \ddots & \vdots\\
0 & 0 & \dots & n_d/\gcd(q, n_d)\\
\end{pmatrix}
\]
where $n_1$, $n_2$, $\dots$, $n_d$ stands for the elementary divisors of $D=SNF(B)$, are two possible basis matrices for the maximal integral sub-lattice $\Z^d \cap \ll$ of the rational lattice $\ll := q^{-1} B\Z^d \subset \Q^d$. The set 
\[
\mathcal{S}=\left\{ \sum_{i=1}^d j_iP\boe_i, 0 \leq j_i < n_i/\gcd(n_i, q)\right\}
\]
consists of all distinct coset representatives of $\Z^d/(\Z^d \cap \ll)$.
\end{theorem}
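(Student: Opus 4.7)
The plan is to exploit the Smith Normal Form $B = PDQ^{-1}$ to diagonalize the problem, and then handle the diagonal case componentwise. First, I would observe that since $Q$ is unimodular, $Q^{-1}\Z^d = \Z^d$, and hence
\[
\ll = q^{-1}B\Z^d = q^{-1}PDQ^{-1}\Z^d = q^{-1}PD\Z^d = P\bigl(q^{-1}D\Z^d\bigr).
\]
Because $P$ is also unimodular, it induces a bijection of $\Z^d$ onto itself, so
\[
\Z^d \cap \ll \;=\; P\bigl(\Z^d \cap q^{-1}D\Z^d\bigr).
\]
Thus it suffices to compute $\Z^d \cap q^{-1}D\Z^d$ for the diagonal matrix $D = \mathrm{diag}(n_1,\ldots,n_d)$.

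Next I would analyze the diagonal case. A vector $\bz$ lies in $q^{-1}D\Z^d$ iff $z_i = (n_i/q)y_i$ for some $y_i \in \Z$, $1 \le i \le d$. The requirement $\bz \in \Z^d$ is then equivalent to $q \mid n_iy_i$, i.e., $y_i \in (q/\gcd(q,n_i))\Z$ for each $i$. This in turn is equivalent to $z_i \in (n_i/\gcd(q,n_i))\Z$, so $\Z^d \cap q^{-1}D\Z^d = N\Z^d$. Combined with the previous step we obtain $\Z^d \cap \ll = PN\Z^d = L\Z^d$, so $L$ is a basis matrix. The matrix $L' = LQ^{-1}$ spans the same lattice, since $L'\Z^d = PNQ^{-1}\Z^d = PN\Z^d = L\Z^d$ by unimodularity of $Q^{-1}$.

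Finally, to identify the coset representatives I would argue as in Proposition~\ref{prop:resGroup}: since $L\Z^d = PN\Z^d$ with $P$ unimodular and $N$ diagonal with entries $m_i := n_i/\gcd(q,n_i)$, the map $\bx \mapsto P\bx$ induces an isomorphism $\Z^d/N\Z^d \to \Z^d/L\Z^d$. Because $N$ is diagonal, the former has the standard representatives $\{\sum_i j_i \boe_i : 0 \le j_i < m_i\}$, and applying $P$ yields exactly $\mathcal{S}$.

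The only subtlety worth flagging is that the entries $m_i$ need not satisfy the SNF divisibility chain $m_i \mid m_{i+1}$, so $N$ is in general not the Smith Normal Form of $L$. However, the argument above uses only the diagonality of $N$ and the unimodularity of $P$ and $Q$, so the classical SNF divisibility property is not needed, and the identification of both the basis and the coset representatives goes through verbatim.
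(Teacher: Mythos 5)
Your proposal is correct and follows essentially the same route as the paper: both reduce to the diagonal case via the unimodularity of $P$ and $Q$ and then compute the intersection componentwise, the only cosmetic difference being that the paper first scales by $q$ and describes $q\Z^d\cap D\Z^d$ via $\mathrm{lcm}(q,n_i)$, while you intersect $\Z^d$ with $q^{-1}D\Z^d$ directly via $\gcd(q,n_i)$ (equivalent since $\mathrm{lcm}(q,n_i)/q=n_i/\gcd(q,n_i)$). As a minor aside, the entries $m_i=n_i/\gcd(q,n_i)$ do in fact always satisfy the divisibility chain $m_i\mid m_{i+1}$ (compare $p$-adic valuations), but, as you rightly observe, only diagonality of $N$ and unimodularity of $P$ are needed.
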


\begin{proof}[Proof of Theorem \ref{thmBasis}]
Consider the scaled lattice $\mm := q\cdot \left(\Z^d \cap \ll\right) = q\Z^d \cap B\Z^d$. Then
\[
\mm' := P^{-1}\mm = (qP^{-1})\Z^d \cap (P^{-1}B)\Z^d = (qP^{-1}\Z^d) \cap (P^{-1}B)\Z^d=q\Z^d \cap (P^{-1}B)\Z^d,
\]
since $P^{-1}\Z^d=\Z^d$. Also, $(P^{-1}B)\Z^d = P^{-1}BQ\Z^d=D\Z^d$. Therefore, $\mm' = q\Z^d \cap D\Z^d$. As $q\Z^d$ consists of vectors whose entries are divisible by $q$ and $D\Z^d$ consists of vectors whose $i$-th entry is divisible by $n_i$ by \eqref{snf}, $1 \leq i \leq d$, $q\Z^d \cap D\Z^d$ consists of vectors whose $i$-th entry is divisible by $\text{lcm}(q, n_i)$. In other words $\mm'= C \Z^d$, where $C$ is
\[
C = \begin{pmatrix}
\text{lcm}(q, n_1) & 0 & \dots & 0\\
0 & \text{lcm}(q, n_2) & \dots & 0\\
\vdots & \vdots & \ddots & \vdots\\
0 & 0 & \dots & \text{lcm}(q, n_d)\\
\end{pmatrix}.
\]
Scaling back, $\Z^d \cap \ll = q^{-1}\mm =P(q^{-1}C)\Z^d = PN\Z^d = PNQ^{-1}\Z^d$. Therefore, one can take $L:=PN$ or$L'=PNQ^{-1}$ to be the basis matrix of $\Z^d \cap \ll$. Coset representatives are obtained from \eqref{eq:resGroup} in Proposition \ref{prop:resGroup}. 
\end{proof}

In order to state the next result, recall that \emph{the content} \cite{And} of the polynomial $f(x) = a_dx^d+\dots+a_1x+a_0 \in \Z[x]$ is  defined by $c(f) := \text{gcd}(\abs{a_0}, \abs{a_1}, \dots, \abs{a_n})$. By the Gauss Lemma \cite{And}, for any two polynomials $f, g \in \Z[x]$, one has $\mc(f\cdot g)=\mc(f)\cdot \mc(g)$.

\begin{theorem}\label{thmIndex}
Let $B \in \Z^{d \times d}$ be invertible and $q \in \N$. The index of the maximal integral sublattice $\Z^d \cap \ll$ of $\ll = q^{-1}B\Z^d$in $\Z^d$ is 
\[
[\Z^d : \left(\Z^d \cap \ll\right)] = \frac{\abs{\det(B)}}{\mc\left(\varphi_D(qx)\right)},
\]
where and $\varphi_D(x)$ is the characteristic polynomial of $D=\mathrm{SNF}(B)$.
\end{theorem}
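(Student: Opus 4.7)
The plan is to chain together three ingredients that are already in place: the explicit basis matrix for $\Z^d \cap \ll$ from Theorem~\ref{thmBasis}, the formula $|\det(B)|=\prod_{i=1}^d n_i$, and the Gauss lemma on multiplicativity of the content. First, Theorem~\ref{thmBasis} gives $\Z^d\cap \ll = PN\Z^d$ with $P$ unimodular and $N$ diagonal with entries $n_i/\gcd(q,n_i)$. Hence the index equals
\[
[\Z^d : (\Z^d \cap \ll)] = |\det(PN)| = |\det N| = \prod_{i=1}^{d}\frac{n_i}{\gcd(q,n_i)}.
\]
Multiplying the numerator and denominator by the missing factors we can rewrite this as
\[
\frac{\prod_{i=1}^{d} n_i}{\prod_{i=1}^{d}\gcd(q,n_i)} = \frac{|\det D|}{\prod_{i=1}^{d}\gcd(q,n_i)} = \frac{|\det B|}{\prod_{i=1}^{d}\gcd(q,n_i)},
\]
using that $P$ and $Q$ are unimodular and $B=PDQ^{-1}$.

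It remains to identify the denominator with $\mc(\varphi_D(qx))$. Because $D$ is diagonal with entries $n_1,\dots,n_d$, its characteristic polynomial factors as $\varphi_D(x)=\prod_{i=1}^{d}(x-n_i)$, so $\varphi_D(qx)=\prod_{i=1}^{d}(qx-n_i)$. Each linear factor $qx-n_i\in\Z[x]$ has content $\gcd(q,n_i)$, and applying the Gauss lemma $d-1$ times yields
\[
\mc(\varphi_D(qx)) = \prod_{i=1}^{d} \mc(qx-n_i) = \prod_{i=1}^{d}\gcd(q,n_i),
\]
which is exactly the denominator computed above. Substituting completes the proof. There is no real obstacle: once the basis of $\Z^d\cap\ll$ has been pinned down in Theorem~\ref{thmBasis}, the rest is a short algebraic identity, and the only subtle point to verify is that $\varphi_D$ is computed from the Smith normal form $D$ (not from $B$), so that its roots are precisely the elementary divisors $n_i$ and the content splits as above.
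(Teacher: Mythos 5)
Your proof is correct and follows essentially the same route as the paper: read off the index from the diagonal basis matrix $N$ of Theorem~\ref{thmBasis}, identify $\prod_j n_j$ with $\abs{\det(B)}$ via unimodularity of $P,Q$, and use the Gauss lemma to recognize $\prod_j \gcd(q,n_j)$ as $\mc\left(\varphi_D(qx)\right)$. The only cosmetic difference is the sign convention in $\varphi_D$, which does not affect the content computation.
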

\begin{proof}[Proof of Theorem \ref{thmIndex}]
By Theorem \ref{thmBasis},
\[
[\Z^d : (\Z^d \cap \ll)]  = \det{N} =\prod_{j=1}^d \frac{n_j}{\text{gcd}(q, n_j)}= \frac{\abs{\det(B)}}{\prod_{j=1}^d\text{gcd}(q, n_j)}.
\]
The content of $n_j - qx$ is exactly $\text{gcd}(q, n_j)$. Since $\varphi_D(x) = \prod_{j=1}^d(n_j-x)$, one has
$\prod_{j=1}^d\text{gcd}(q, n_j) = \prod_{j=1}^d \mc(n_j-qx)=\mc\left(\varphi_D(qx)\right)$.
\end{proof}

% Let's take a look at a simple example which shows that the index of an integral part of a rational lattice, in general, \textit{is not equal} to the absolute value of the constant term of the minimal polynomial of its basis matrix in $\Z[x]$.

% \begin{example}
% Consider the integral $2 \times 2$ Jordan block
% \[
% A = \begin{pmatrix}
% 		  4 & 1\\
% 		  0 & 4\\
% \end{pmatrix}.
% \] with $\det(A)=16$. Define the rational lattice $\ll = B\Z^2$ by taking
% \[
% B = \frac{1}{2} A = \begin{pmatrix}
% 					2 & 1/2\\
% 		  			0 & 2\\
% \end{pmatrix}.
% \] with the common denominator  $q=2$. The minimal polynomial of $B$ in $\Z[x]$ is
% \[
% p_B(x)=\phi_B(x)=(x-2)^2 = x^2-4x+4.
% \] The Smith Normal Form of $A$ is
% \[
% SNF(A) = P A Q^{-1} =  \begin{pmatrix}
% 		  1 & 0\\
% 		  4 & -1\\
% \end{pmatrix} \cdot  \begin{pmatrix}
% 		  4 & 1\\
% 		  0 & 4\\
% \end{pmatrix} \cdot  \begin{pmatrix}
% 		  0 & 1\\
% 		  1 & -4\\
% \end{pmatrix} =  \begin{pmatrix}
% 		  1 & 0\\
% 		  0 & 16\\
% \end{pmatrix}.
% \] with the characteristic polynomial $\phi_{\text{SNF}(A)}(x) = (x-1)(x-16)=x^2-17x+16$.
% The content $\phi_{\text{SNF}(A)}(2x)=2(2x^2-17x+8)$ is equal to $2$. By Theorem \ref{thmIndex},
% \[
% \left[\Z^d : \left(\Z^d \cap \ll\right)\right] =\frac{\abs{\det(A)}}{\mc\left(\phi_{\text{SNF}(A)}(2x)\right)} = \frac{16}{2}=8.
% \]
% However, $p_B(0)=4 \ne 8$.
% \end{example}

\section{Numerical examples}
\label{sec:num}

We illustrate the main features of the theory of digit systems developed so far with examples.

\subsection{Rotation digit system associated to smallest Pythagorean triple.}\label{subsec:rot345}

Consider the $2 \times 2$ matrix
\[
A = \begin{pmatrix*}[r]3/5&-4/5\\4/5&3/5\end{pmatrix*},
\] that corresponds to the Pythagorean triple $3^2+4^2=5^2$. This is a rotation by the angle $\theta = 53.1301023542\dots^{\circ}$. We shall construct a small digit set $\bdd$, such that $(A, \bdd)$ in $\Z^2[A]$ has the finiteness property.

First, we determine the basis for the auxiliary lattice $\ll :=\Z^2 \cap A\Z^2$. Let
\[
B := 5A=\begin{pmatrix*}[r]
				3 &-4\\
                     4&3
			\end{pmatrix*}.
\]
Then, $\ll = \Z^2 \cap A\Z^2 = \frac{1}{5}\left(5\Z^2 \cap B\Z^2\right)$.
The SNF decomposition yields $B=PDQ^{-1}$ with
\[
D = \begin{pmatrix}
			1 & 0\\
            0 & 25
			\end{pmatrix},\quad
P = \begin{pmatrix}
		7 & 1 \\
		1 & 0
	\end{pmatrix},\quad
Q=\begin{pmatrix*}[r]
		1 & 3 \\
		-1 & -4
	\end{pmatrix*}.
\] with $\det{P}=\det{Q}=-1$. According to Theorem \ref{thmBasis}, $\ll = L\Z^d$ with the basis matrix
\[
L=PN = \begin{pmatrix}
		7 & 1 \\
		1 & 0
	\end{pmatrix} \cdot
	\begin{pmatrix}
		1/\text{gcd}(5, 1) &0\\
     	0&25/\text{gcd}(5, 25)
	\end{pmatrix}=
	\begin{pmatrix}
		7 & 1 \\
		1 & 0
	\end{pmatrix} \cdot
	\begin{pmatrix}
		1 &0\\
        0 &5
	\end{pmatrix}=
	\begin{pmatrix}
		7 & 5\\
        1 & 0
	\end{pmatrix}.
\]
One possible set of coset representatives of $\Z^2$ modulo $\ll$ (equivalent to the one given in Theorem \ref{thmBasis}) is
\[
\mathcal{S}=\Z^2/\ll = \left\{ 0P\boe_1+0P\boe_2, 0P\boe_1+1P\boe_2, 0P\boe_1+2P\boe_2, 0P\boe_1+3P\boe_2, 0P\boe_1+4P\boe_2 \right\} =
\]
\[
= \left\{\vectwo{0}{0}, \vectwo{1}{0}, \vectwo{2}{0}, \vectwo{3}{0}, \vectwo{4}{0}\right\}=
\]
After translating $\mathcal{S}$ by $-2\boe_1$, we obtain an equivalent set of representatives with reduced coordinates:
\[
\rr := \mathcal{S}-2\boe_1 = U^{-1}\sc = \left\{\vectwo{-2}{0}, \vectwo{-1}{0}, \vectwo{0}{0}, \vectwo{1}{0}, \vectwo{2}{0}\right\}=
\]
\[
=\{-2\boe_1, -\boe_1, \bo, \boe_1, 2\boe_1\}.
\]

Next, we construct a \emph{pre-periodic} digit set $\dd_{\mathrm{r}}'$ that has a good convex enclosure with respect to $\rr$ as depicted as points in Figure \ref{fig_enclosures}.

\begin{figure}
	\includegraphics{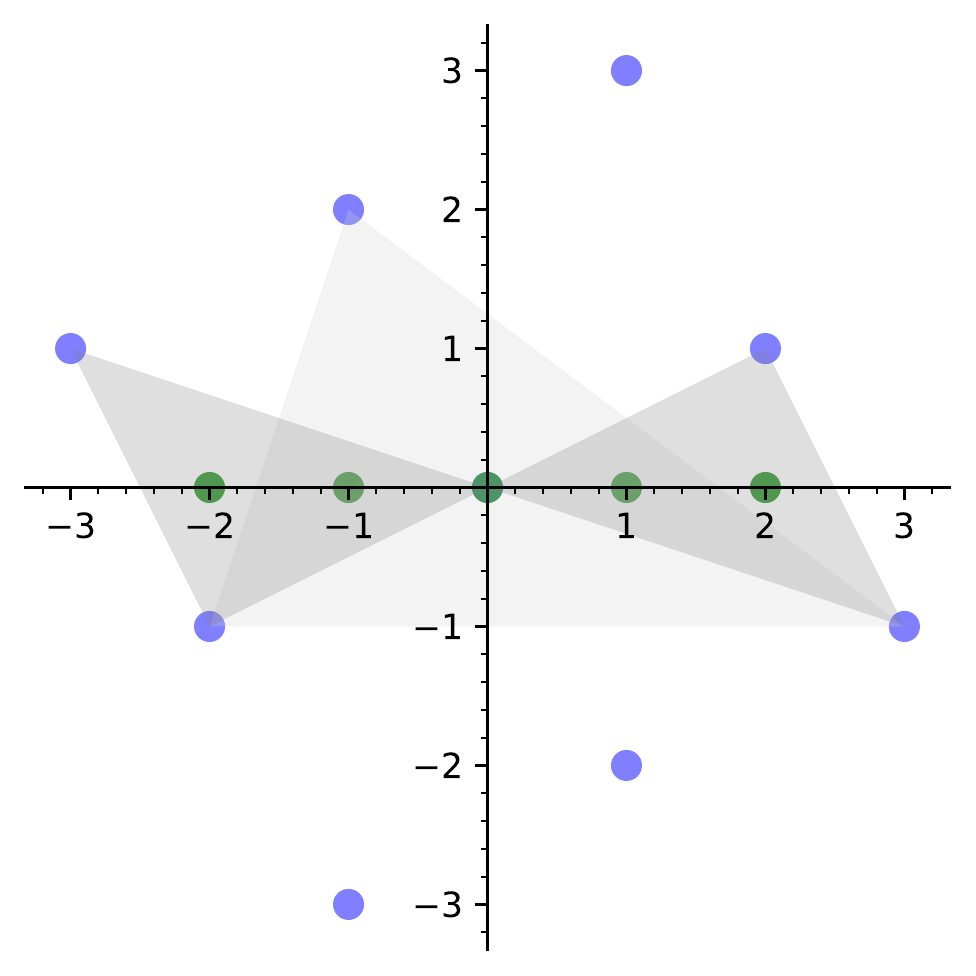} 
	\caption{Points from $\ll = \Z^2 \cap A\Z^2$ (blue) and from $\rr=\Z^2/\ll$ (green). Two small dark grey triangles, namely, $\tt_1$ and $-\tt_1$  (grey) and one big light grey triangle $\tt_2$ with vertices in $\ll$ enclose elements of $\rr$.\label{fig_enclosures}}
\end{figure}

Notice that the residues $\boe_1$ and $2\boe_1$ are the inner points of a triangle with vertices
\[
\tt_1 = \left\{\vectwo{0}{0}, \vectwo{2}{1}, \vectwo{3}{-1}\right\} \subset \ll.
\] Likewise, $-\boe_1, -2\boe_1$ are the inner points of the triangle with vertices in $-\tt_1$. The last residue $\bo_2$ is an inner point of the triangle with vertices
\[
\tt_2 = \left\{\vectwo{-2}{-1}, \vectwo{-1}{2}, \vectwo{3}{-1}\right\} \subset \ll.
\]

\begin{figure}
	\begin{subfigure}{0.4\textwidth}
		\includegraphics[scale=0.65]{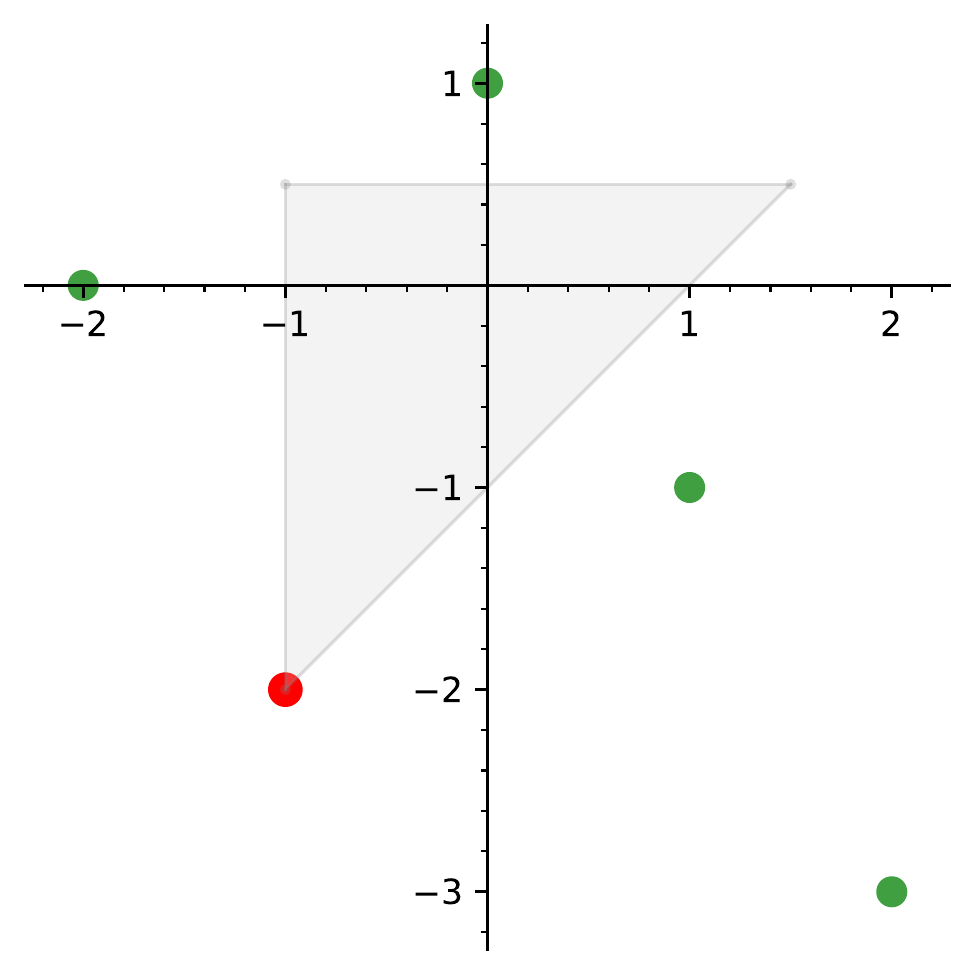} 
		\caption{$\bx=(-2, 0)^T$}
	\end{subfigure}
	\hfill
	\begin{subfigure}{0.4\textwidth}
		\includegraphics[scale=0.65]{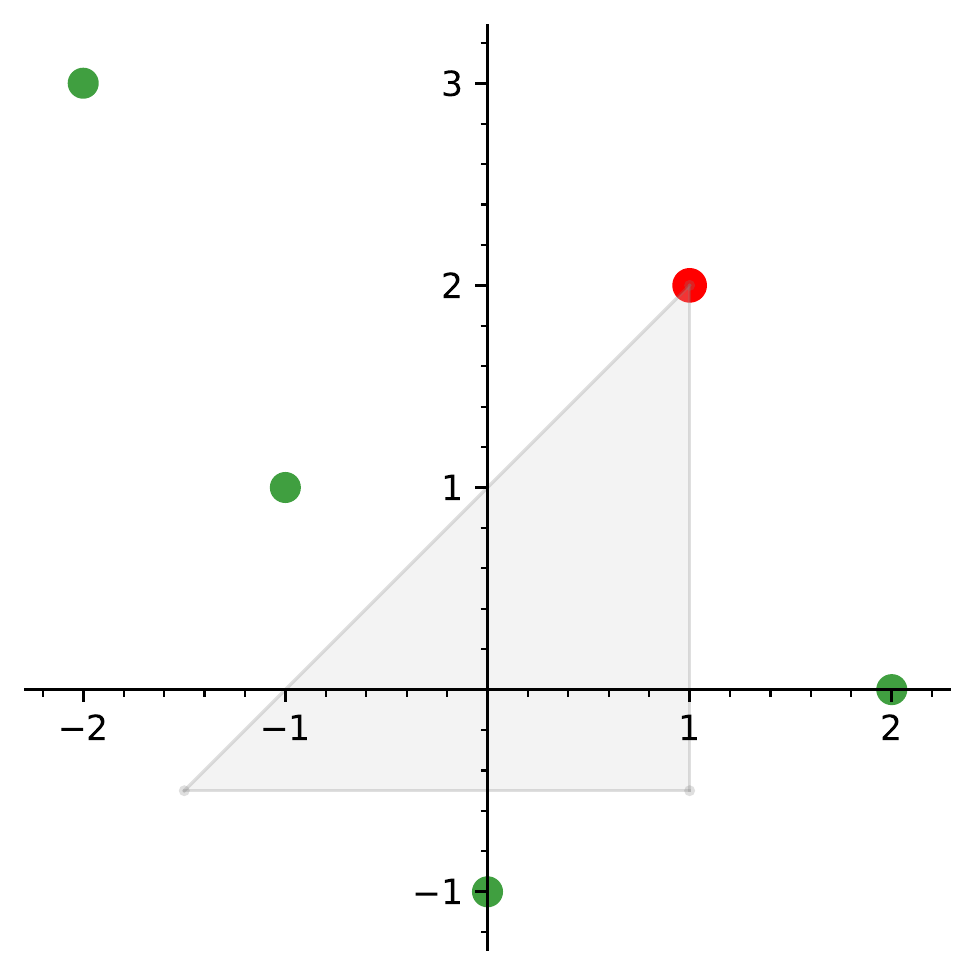} 
		\caption{$\bx=(2, 0)^T$}
	\end{subfigure}
	\begin{subfigure}{0.4\textwidth}
		\includegraphics[scale=0.65]{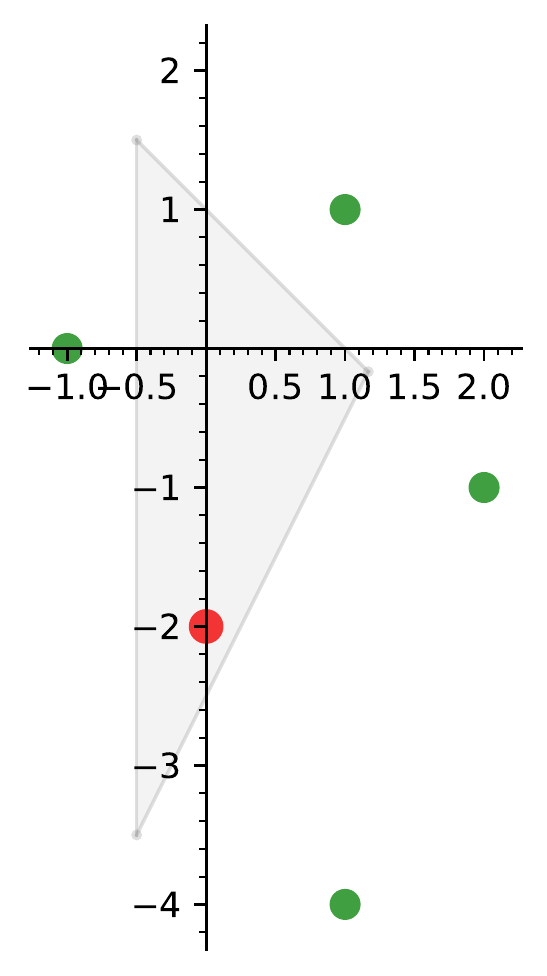} 
		\caption{$\bx=(-1, 0)^T$}
	\end{subfigure}
	\hfill
	\begin{subfigure}{0.4\textwidth}
		\includegraphics[scale=0.65]{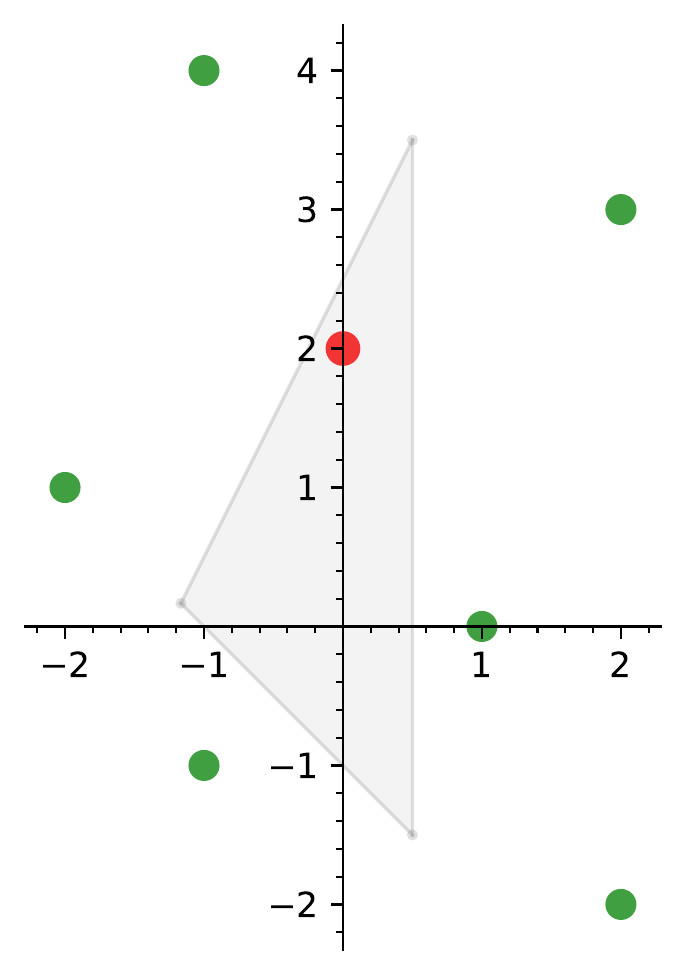} 
		\caption{$\bx=(1, 0)^T$}
	\end{subfigure}
	\begin{subfigure}{\textwidth}
		\centering
		\includegraphics[scale=0.65]{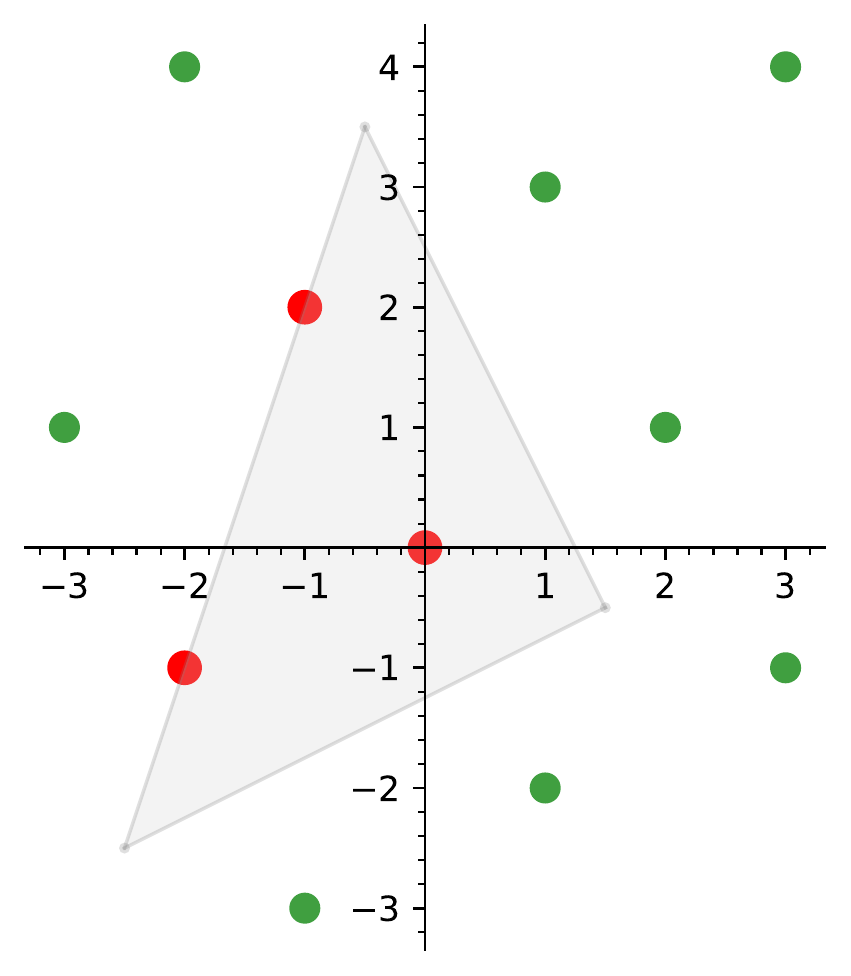} 
		\caption{$\bx=(0, 0)^T$}
	\end{subfigure}
	\caption{Repeller points $\rp{\Phi_{\mathrm{r}}}{\bx}$  (in red) inside the regions of repelling action of $\Phi_{\mathrm{r}}$ (in grey), for each residue class $\bx \in \Z^2/\ll$. Green colored are the remaining points from the same residue class $\bx +\ll$.\label{fig_feasible}}
\end{figure}
Define the sets
\[
\bdd(\bo) := \bo - \tt_2 =  \left\{\vectwo{2}{1}, \vectwo{1}{-2}, \vectwo{-3}{1}\right\},
\]
\[
\bdd(\boe_1) := \boe_1 - \tt_1= \left\{\vectwo{-1}{-1}, \vectwo{1}{0}, \vectwo{-2}{1}\right\},
\]
\[
\bdd(-\boe_1) := -\boe_1 + \tt_1 =\left\{\vectwo{-1}{0}, \vectwo{2}{-1}, \vectwo{1}{1}\right\}.
\]
\[
\bdd(2\boe_1) := 2\boe_1 - \tt_1=\left\{\vectwo{0}{-1}, \vectwo{2}{0}, \vectwo{-1}{1}\right\},
\]
\[
\bdd(-2\boe_1) := -2\boe_1+\tt_1 =\tt\left\{\vectwo{-2}{0}, \vectwo{1}{-1}, \vectwo{0}{1}\right\}.
\]
As $\bo_2$ is an inner point of these sets, their union
\[
\bdd' = \bigcup_{\bx \in \rr} \bdd(\bx)
\]  has good enclosure with respect to $\rr$. By Theorem \ref{thmPeriodic} and Remark \ref{rem:rotRestr}, the division mapping $\Phi_{\mathrm{r}}(\bx)=A^{-1}(\bx-\bd_r(\bx))$ with the digit $\bd_{\mathrm{r}}(\bx) \in \bdd'$, restricted to $\Z^2$, has a finite attractor $\aa_{\Phi_{\mathrm{r}}}$ in $\Z^2$. To compute the set $\aa_{\Phi_{\mathrm{r}}}$, first notice that our matrix $A$ is orthogonal (it is a rotation matrix): by taking the identity matrix $T=\id_2$ in the definition of $A$--invariant norm form on p.11, one obtains that the usual Euclidean norm $\norm{\dots}_{2}$ is $A$ and $A^{-1}$--invariant. To determine the attractor set $\aa_{\Phi_{\mathrm{r}}}$, one needs to find the repeller points $\bx \in \Z^2$, such that $\norm{\Phi_{\mathrm{r}}(\bx)}_{A^{-1}} \geq \norm{\bx}_{A^{-1}}$ and then determine their orbits. In our case, we need to compute all the integral points in the feasible regions  $\bigcup_{\bx \in \rr}\vv(\bx)$, such that
\[
 \vv(\bx) := \{\by \in \R^n: \norm{\bx - \bd} \geq \norm{\bx}, \forall \bd \in \bdd'(\bx)\}.
\]
By Corollary \ref{corMinv} the feasible regions $\vv(\bx)$ are compact since $\bdd'$ has good enclosure with respect to $\rr$. For each $\bx \in \rr$, we write down and solve the linear program that corresponds to $\vv(\bx)$, as in the proof of Corollary \ref{corMinv}.  We use \texttt{SAGE} \cite{sagemath} \texttt{Polyhedron} and \texttt{Mixed Integer Linear Programming (MILP)} modules in a combination with \texttt{PPL} solver to explicitly solve for the feasible regions and enumerate the sets of contained integer points
\[
\rp{\Phi_{\mathrm{r}}}{\bx} :=\vv(\bx) \cap (\bx + \ll),
\] refer to Figure~\ref{fig_feasible}. Our calculations yield:
\[
\rp{\Phi_{\mathrm{r}}}{-2\boe_1} = \left\{ \vectwo{-1}{-2}\right\},\qquad \rp{\Phi_{\mathrm{r}}}{2\boe_1} = \left\{ \vectwo{1}{2}\right\},
\]
\[
\rp{\Phi_{\mathrm{r}}}{-\boe_1} = \left\{ \vectwo{0}{-2}\right\},\qquad \rp{\Phi_{\mathrm{r}}}{\boe_1} = \left\{ \vectwo{0}{2}\right\},
\] and
\[
\rp{\Phi_{\mathrm{r}}}{\bo} = \left\{ \vectwo{-1}{2},   \vectwo{0}{0}, \vectwo{-2}{-1}\right\}. 
\]
The last step is to optimize the digit set by picking up the minimal possible number of representatives from the periods in orbits of points in $\rp{\Phi_{\mathrm{r}}}{\bx}$. To find this minimal set, we compute the images
\[
\Phi_{\mathrm{r}}\vectwo{0}{0} = \Phi_{\mathrm{r}}\vectwo{0}{2} = \vectwo{1}{2}, \qquad \Phi_{\mathrm{r}}\vectwo{0}{-2} = \Phi_{\mathrm{r}}\vectwo{-2}{-1} = \vectwo{-1}{-2},
\]
\[
\quad \Phi_{\mathrm{r}}\vectwo{-1}{2} = \vectwo{2}{-1}, \quad \Phi_{\mathrm{r}}\vectwo{2}{-1} = \vectwo{0}{0}
\]
and,
\[
\Phi_{\mathrm{r}}\vectwo{1}{2} = \vectwo{1}{2}, \quad \Phi_{\mathrm{r}}\vectwo{-1}{-2} = \vectwo{-1}{-2}.
\]
Therefore, any orbit of a point $\bx \in \Z^2$ eventually hits the set
\[
\pp = \left\{\vectwo{1}{2}, \vectwo{-1}{-2}\right\}.
\]
For the final digit set, we take
\[
\bdd = \bdd' \cup \pp.
\]
It consists of $17$ digits, all depicted in Figure~\ref{fig_digits}. Any $\Z^2$ vector has a finite radix expansion in base $A$ with digits $\bdd$. For instance, the radix expansion of $(6, -7)^t \in \Z^2$ is
\[
\vectwo{6}{-7} = \vectwo{1}{-2} + A\vectwo{1}{-1} + A^2\vectwo{-2}{0} + A^3\vectwo{-1}{1} + A^4\vectwo{0}{1} +A^5\vectwo{1}{2} \in \bdd[A].
\]

\begin{figure}
	\includegraphics[scale=0.75]{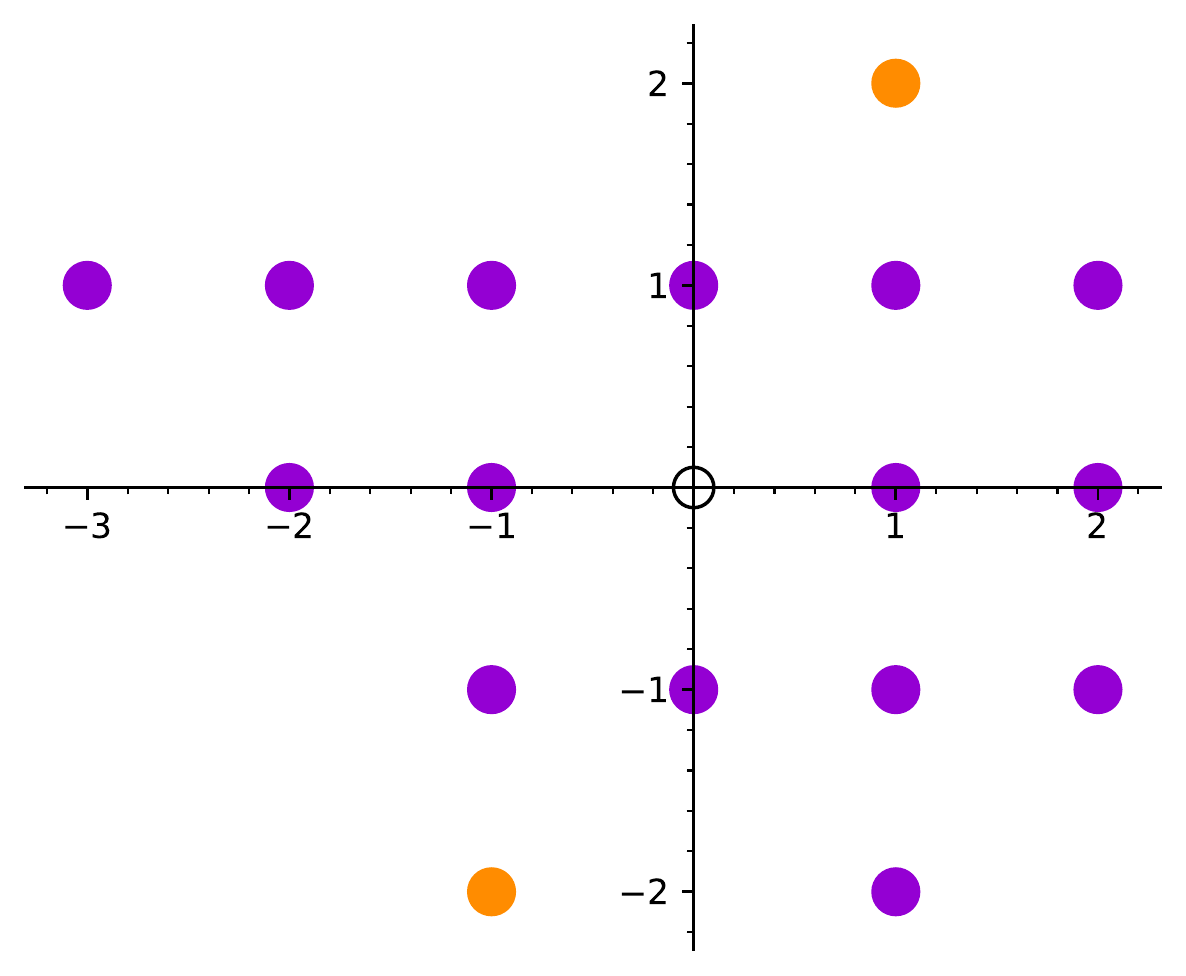} 
	\caption{The digit set $\bdd$: the initial digits  $\bdd'$ (violet) and additional points from $\aa_{\Phi_{\mathrm{r}}}$(orange) that represent periods; notice that $\bo_2 \not\in \bdd$.\label{fig_digits}}
\end{figure}

It is remarkable that non-trivial rotational digit systems that possess the finiteness property in principle do not require zero digit $\bo_d \in \bdd$. In the present example, the zero vector $\bo_2$ is represented in $\bdd[A]$ as
\[
\vectwo{0}{0} = \vectwo{2}{1} + A\vectwo{-2}{1}.
\]
In some situations (like taking the twisted sums of the rotational digit systems), the  artificial inclusion of $\bo_d$ in $\bdd$ might be necessary, see Section \ref{subsec:example_twisted}.

The finite digital expansions of vectors from lattice from $\Z^2$ in the digit system $(A, \bdd)$ yield the finite expansions of vectors from module $\Z^2[A]$. Let us illustrate the expansion process for the vector
\[
v=\vectwo{-156/25}{-8/25}= \vectwo{-3}{0}+A\cdot\vectwo{0}{2}+A^2\vectwo{-1}{2} \in \Z^2[A].
\]
By applying the mappings $\bd_{\mathrm{r}}$, $\Phi_{\mathrm{r}}$, one expresses $\vectwo{-3}{0}=\vectwo{-1}{1}+A\vectwo{-2}{1}$ and then carrying $\vectwo{-2}{1}$ forward and adding to $\vectwo{0}{2}$ yields
\[
v= \vectwo{-1}{1}+A\cdot\vectwo{-2}{3}+A^2\vectwo{-1}{2}
\]
Similarly, $\vectwo{-2}{3}=\vectwo{-1}{1}+A\vectwo{1}{2}$ gives
\[
v= \vectwo{-1}{1}+A\cdot\vectwo{-1}{1}+A^2\vectwo{0}{4}
\]
By replacing $\vectwo{0}{4}$ in $\Z^2$ with its expansion in $\bdd[A]$, one finds
\[
\vectwo{-156/25}{-8/25}= \vectwo{-1}{1}+A\cdot\vectwo{-1}{1}+A^2\vectwo{-1}{1}+A^3\vectwo{1}{0}+A^4\vectwo{2}{-1} \in \bdd[A].
\]
It should be noted that, for this process to work, one must know at least one arbitrary (not necessarily the shortest one) representation of the form $\sum_{j=0}^{k-1}A^j\bz_j$, $\bz_j \in \Z^2[A]$ of the vector $v \in \Z^2[A]$ to perform the subsequent digital expansion.

\subsection{Twisted sum of a rotational digit system with itself.}\label{subsec:example_twisted} Consider the matrix
\[
A=\begin{pmatrix}
    0 & -1\\
    1 & -1/2\\
\end{pmatrix},
\]
that is a companion matrix to its characteristic polynomial $\varphi_A(x)=x^2+x/2+1$. It is easy to see this is a generalized rotation: it is similar to rotation matrix via the transformation
\[
T=\begin{pmatrix}
    4 & 0\\
    1 & \sqrt{15}
\end{pmatrix}, \qquad T^{-1}AT = \begin{pmatrix*}[r]
-1/4 & -\sqrt{15}/4\\
\sqrt{15}/4 & -1/4\\.
\end{pmatrix*}
\]
The $\R^2$ norm $\norm{\bx}_{A^{-1}}=
\norm{T^{-1}\bx}_{euclidean}$ is then $A^{-1}$ invariant. Calculations similar to those in Section \ref{subsec:rot345} yield the residue group with respect of the lattice
\[
\ll  = \Z^2 \cap A\Z^2 = L\Z^2, \qquad L = \begin{pmatrix}
    2 & 2\\
    1 & 0\\
\end{pmatrix}
\]
is
\[
\rr = \Z^2/\ll = \left\{\vectwo{0}{0}, \vectwo{1}{0}\right\}.
\]
\begin{figure}[h]
	\includegraphics[scale=0.75]{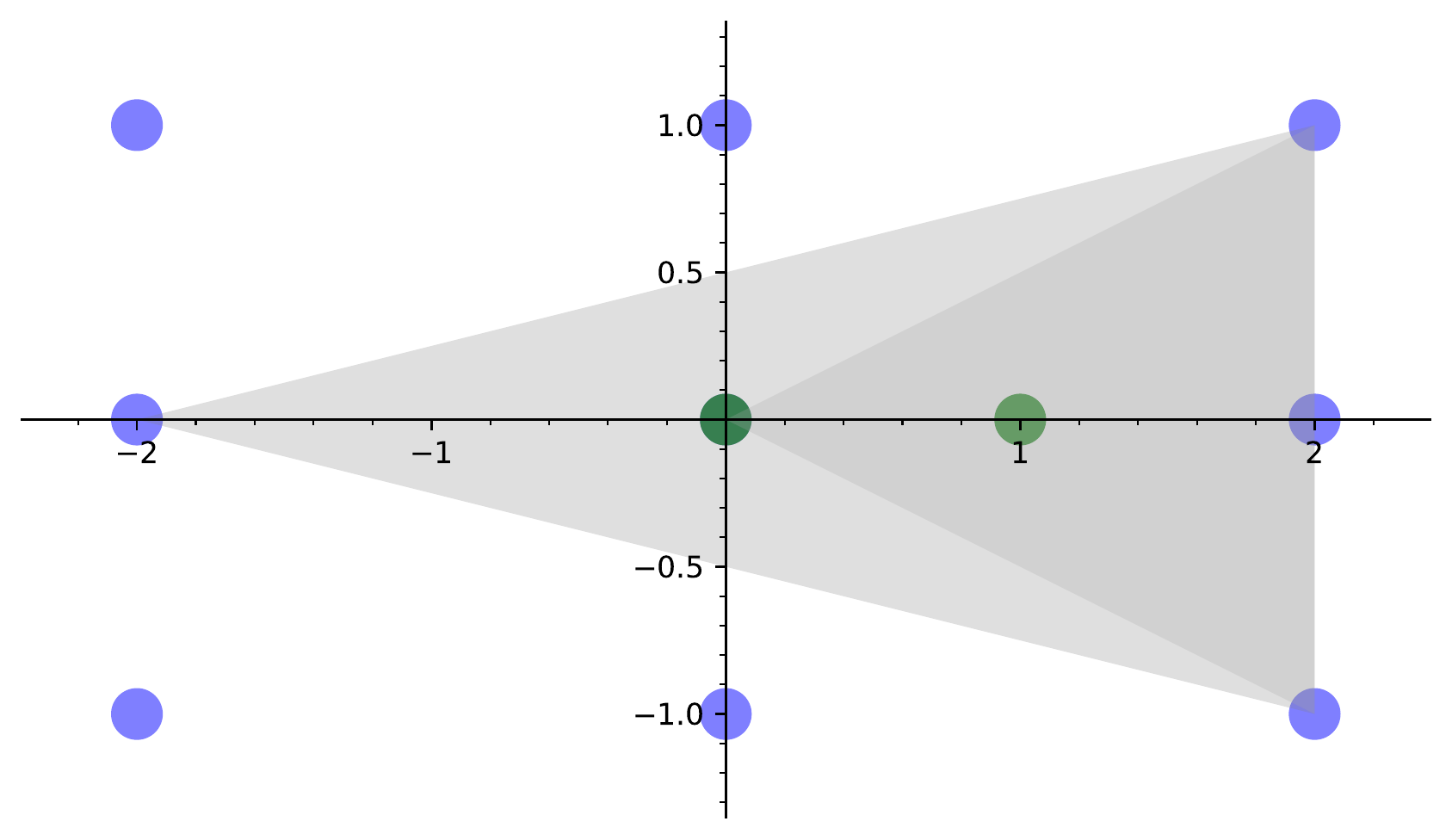} 
	\caption{Points from $\ll = \Z^2 \cap A\Z^2$ (blue) and from $\rr=\Z^2/\ll$ (green). Two small dark grey triangles, namely, $\tt_1$ and $\tt_2$  (light grey) enclose $(0,0)^T$ and $(0, 1)^T$, the elements of $\rr$.\label{fig_enclosures_2}}
\end{figure}
By enclosing these residue vectors inside the triangles with vertices
\[
\tt_1 := \left\{\vectwo{-2}{0}, \vectwo{2}{1}, \vectwo{2}{-1}\right\}, \qquad \tt_2: = \left\{\vectwo{0}{0}, \vectwo{2}{1}, \vectwo{2}{-1}\right\}, 
\]
as depicted in Figure \ref{fig_enclosures_2}, one finds the pre--periodic digit set
\[
\bdd\vectwo{0}{0}:=\vectwo{0}{0} - \tt_1, \qquad \bdd\vectwo{1}{0} := \vectwo{1}{0} - \tt_2,
\]
\[
\bdd' =  \bdd\vectwo{0}{0} \bigcup \bdd\vectwo{1}{0} = \left\{\vectwo{2}{0}, \vectwo{-2}{-1}, \vectwo{-2}{1}, \vectwo{1}{0}, \vectwo{-1}{-1}, \vectwo{-1}{1}\right\}.
\]
Then $\Phi_{\mathrm{r}, A}(\bx)=A^{-1}(\bx - \bd_{\mathrm{r},A}(x))$ is ultimately periodic in $\Z^2$. The subsequent analysis of the repeller points, similar to the one in Section \ref{subsec:rot345} carried out by solving corresponding linear programs yields $11$ integral repeller points
\[
\text{Rep}_{\Phi_{\mathrm{r}, A}} := \left\{ \vectwo{0}{-4}, \vectwo{0}{-3}, \vectwo{0}{-2},
\vectwo{0}{-1}, \vectwo{0}{0}, \vectwo{0}{1}, \vectwo{0}{2}, \vectwo{2}{4}, \vectwo{2}{5}, \vectwo{-1}{0}, \vectwo{1}{2}\right\},
\]
as illustrated in Figure \ref{fig_feasible_2}, (A) and (B).
\begin{figure}
	\begin{subfigure}{0.49\textwidth}
		\includegraphics[scale=0.75]{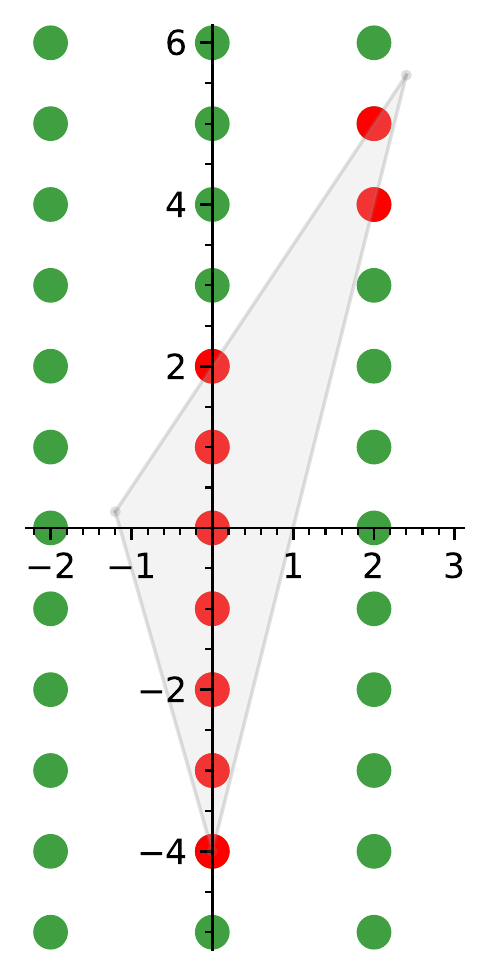} 
		\caption{$\bx=(0, 0)^T$}
	\end{subfigure}
	\hfill
	\begin{subfigure}{0.49\textwidth}
		\includegraphics[scale=0.75]{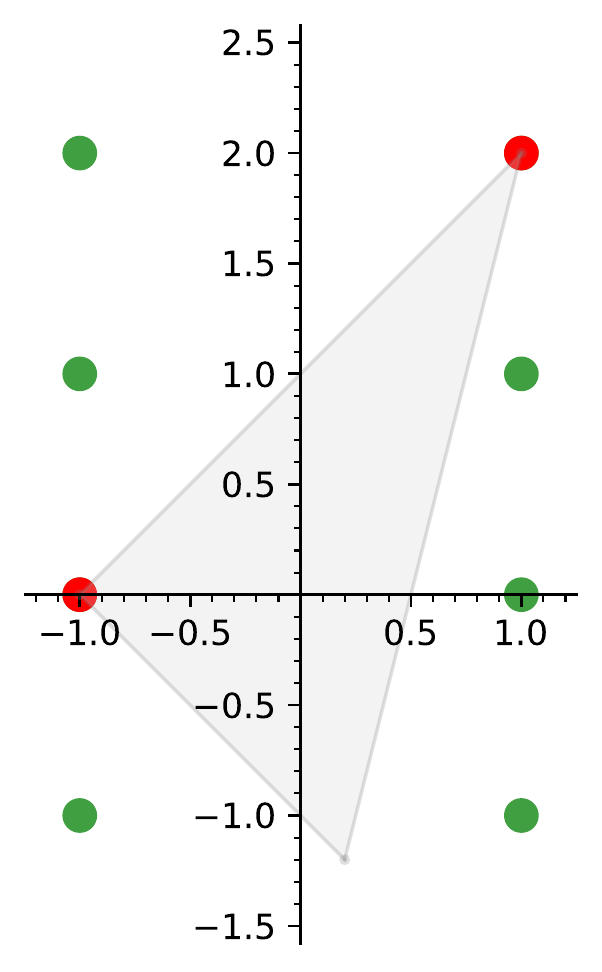} 
		\caption{$\bx=(1, 0)^T$}
	\end{subfigure}
	\caption{Repeller points $\rp{\Phi_{\mathrm{r},A}}{\bx}$  (in red) inside the regions of repelling action of $\Phi_{\mathrm{r}, A}$ (in grey), for each residue class $\bx \in \Z^2/\ll$. Remaining points $\bx +\ll$ are colored green.\label{fig_feasible_2}}
\end{figure}
It turns out, the orbits of each repeller point already contains at least one digit of $\bdd'$ that is depicted in \ref{fig_digits2}. Therefore, the digit system $(A, \bdd')$ with $6$ digits already has the finiteness property in $\Z^2[A]$.
\begin{figure}
	\includegraphics[scale=0.5]{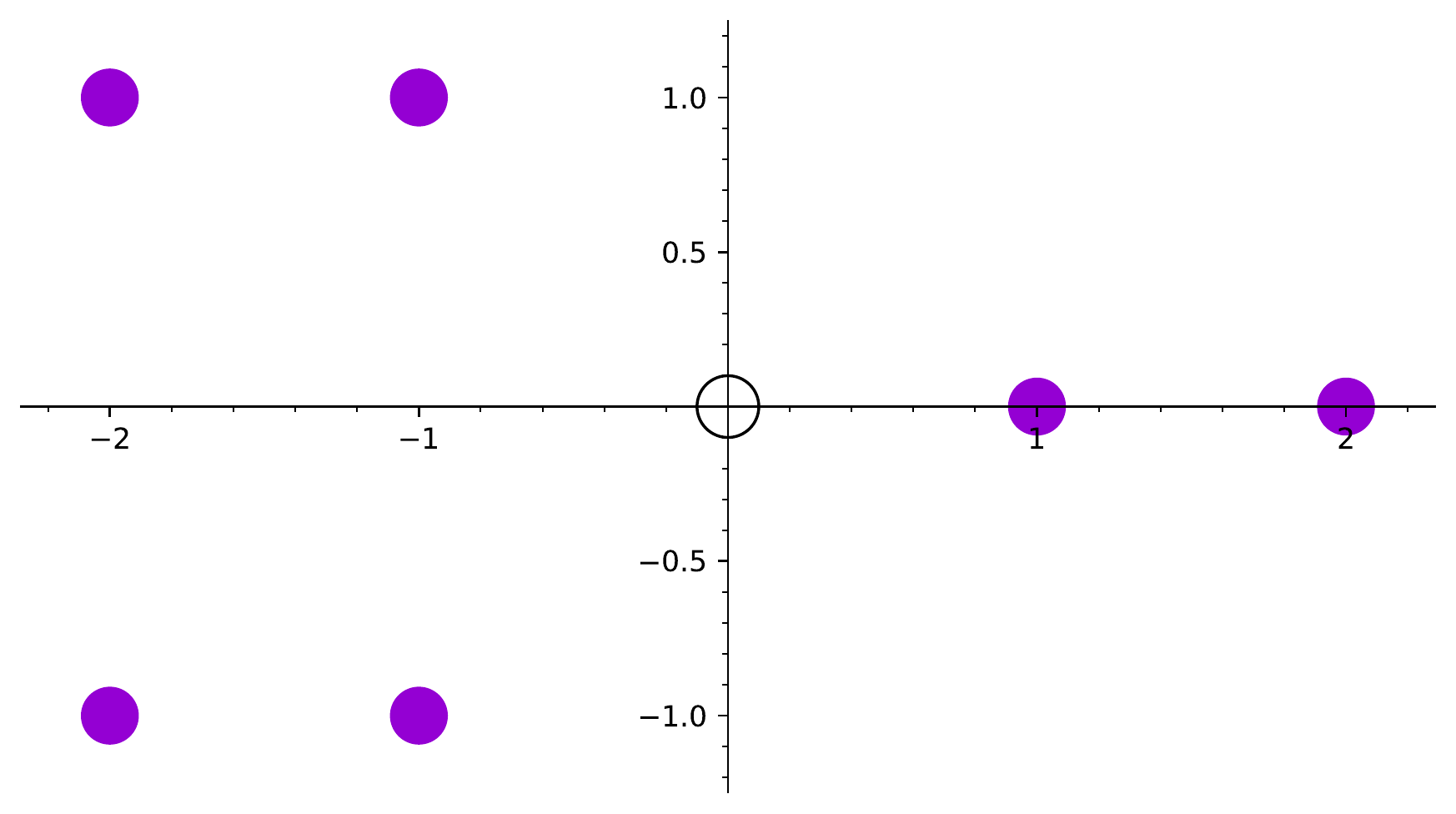} 
	\caption{The digit set $\bdd'$ for which $(A, \bdd')$ has finiteness property in $\Z^2[A]$. Notice again that $\bo_2 \not\in \bdd'$.\label{fig_digits2}}
\end{figure}

Next we append the zero vector $\bdd=\bdd' \cup \left\{\vectwo{0}{0}\right\}$. in order to construct the twisted sum of this digit system with itself. Consider the twisted sum $M=A \oplus_N A$, where
\[
N=\begin{pmatrix}
     0 & 1\\
     0 & 0\\
    \end{pmatrix}, \qquad M = A \oplus_N A =
    \begin{pmatrix}
        A & O_{2 \times 2}\\
        N & A\\
    \end{pmatrix}=
    \begin{pmatrix*}[r]
        0 & -1   & 0 & 0\\
        1 & -1/2 & 0 & 0\\
        0 &  1 & 0 & -1\\
        0 &  0 & 1 & -1/2
    \end{pmatrix*}.
\]
The matrix $M=H_2(\varphi_A)$ is an order--$2$ hypercompanion matrix to the polynomial $\varphi_A(x)$. Then the digit system $(M, \bdd \dsum \bdd)$ has the finiteness property in $\Z^4$ by Lemma \ref{lemTwist}: the radix expansions in $\Z^4$ are done using the twisted mapping $\Phi_{\mathrm{r}, M}=\Phi_{\mathrm{r}, A} \oplus_N \Phi_{\mathrm{r}, A}$ described in Section \ref{subsec:twistsum}. For instance,
\[
\vecfour{1}{2}{-3}{4}=\vecfour{1}{0}{-1}{1}+M\vecfour{2}{0}{2}{0}+M^2\vecfour{0}{0}{1}{0}+M^3\vecfour{0}{0}{-2}{-1}+M^4\vecfour{0}{0}{1}{0}
\]
The finiteness property of $(M, \bdd \oplus \bdd)$ extends to the whole module $\Z^4[M]$ through the mapping $\Psi_M$ \eqref{efExtDiv} by Corollary \ref{col:RestrExt}.

The digit set $\bdd \dsum \bdd$ is not the smallest possible digit set for which the digit system that has $M$ as its base matrix retains the finiteness property. It can be shown that it is possible to minimize the digit set size further in the following way. Use the digit set $\bdd_{1} = \bdd' \dsum \rr$ for vectors $\bx \oplus \by \in \Z^4$ as long as  $\bx \ne \bo_2$ and switch to the digit set $\bdd_{2} = \left\{\vectwo{0}{0}\right\} \dsum \bdd'$ as soon as  $\bx$
%\in \Z^2$ part of $\bx \oplus \by$ 
becomes $=\bo_2$ in the dynamical system $\Phi_{\mathrm{r}, M}$. Such a digit system $(M, \bdd'')$ with the digit set $\bdd'' = \bdd_{1} \cup \bdd_{r2}$ consists of $18$ digits and has the finiteness porperty in $\Z^4[M]$. It's highly likely $18$ is the minimal possible digit set size for which the finiteness property still holds in this particular base $M$.

The code for the examples provided in Section \ref{subsec:rot345} and \ref{subsec:example_twisted} is available to download online from \cite{Jank}.

\section{Concluding remarks}\label{sec:open}

We would like to conclude this paper by adding two open problems to the list of questions posed in \cite{JanThu} on  the arithmetics of module $\Z^d[A]$.

\begin{problem}\label{prob:p1}
Let $\bx \in \Q^d$ be such that every prime factor $p \in \N$ of the least common denominator of its coordinates divide the least common denominator of the entries in the matrix $A$. Is it true that such $\bx \in \Z^d[A]$?
\end{problem}

 Note that if there exists a prime number $p \in \N$, such that $p$ divides the least common denominator of the coordinates of $\bx$ and $p$ does not divide the denominator of any entry of $A \in \Q^{d \times d}$ then it is clear that $\bx \not  \in \Z^d[A]$.
 
 \begin{problem}\label{prob:p2}
 Let $A \in \Q^{d \times d}$ be non--degenerate. How to compute \emph{the stabilization index} $k_A$ in Lemma \ref{lem:Res}, namely, the smallest integer $k := k_A$, such that
 \[
\Z^d \cap A\Z^d[A] = \Z^d \cap \Z_k^d[A] = \Z^d \cap \left(\Z^d + A\Z^d+ \dots + A^{k-1}\Z^d \right)?
 \]
 \end{problem}
 We know how to find such $k_A$ for certain classes of matrices $A$, but the general case seems to be tied deeply to the representations of lattices (i.e. discrete subgroups) in locally compact groups.

Let us conclude with one final remark. If $(A, \bdd)$ is the classical standard digit system in $\Z^d$ for some expanding base matrix $A \in \Z^{d \times d}$ and some digit set $\bdd \subset \Z^d$, then the knowledge of the number of digits in the set $\bdd$ (which can be determined by observing the number of different symbols used in strings corresponding to radix expansions of random vectors) immediately yields the value of $\abs{\det{A}}$. Thus, it determines the possible conjugacy classes of $A$ in the group $\text{GL}(d, \Z)$. In contrast, for the rotational digit systems $(A, \bdd)$, $A \in \Q^{d \times d}$ with Property (F), described in Section \ref{secPerGenRot}, the size of the digit set depends on the attractor set $\aa_{\Phi_{\mathrm{r}}}$, which in turn depends (in a complicated way) on the choice of the coset representatives and their convex enclosures. Therefore, the size of $\bdd$ alone does not reveal so much information about the base matrix $A$ or its main characteristics, like the determinant or the dimension of $A$. It would be interesting to know if this could be useful in cryptography  applications (in particular, for scrambling the messages and encoding data streams, cf. \cite{Petho:91}).

\medskip 

{\bf Acknowledgement.}
We are grateful to the anonymous referee for her/his careful reading of the manuscript.

% \end{thebibliography}

\bibliographystyle{ams}
\bibliography{comp1}

\end{document}